\begin{document}



\setlength{\parindent}{5mm}
\renewcommand{\leq}{\leqslant}
\renewcommand{\geq}{\geqslant}
\newcommand{\N}{\mathbb{N}}
\newcommand{\sph}{\mathbb{S}}
\newcommand{\Z}{\mathbb{Z}}
\newcommand{\R}{\mathbb{R}}
\newcommand{\C}{\mathbb{C}}
\newcommand{\F}{\mathbb{F}}
\newcommand{\g}{\mathfrak{g}}
\newcommand{\h}{\mathfrak{h}}
\newcommand{\K}{\mathbb{K}}
\newcommand{\RN}{\mathbb{R}^{2n}}
\newcommand{\ci}{c^{\infty}}
\newcommand{\derive}[2]{\frac{\partial{#1}}{\partial{#2}}}
\renewcommand{\S}{\mathcal{S}}
\renewcommand{\H}{\mathbb{H}}
\newcommand{\eps}{\varepsilon}
\newcommand{\chzrel}{c_{\mathrm{LR}}}
\newcommand{\ham}{Hamiltonian}
\newcommand{\sa}{symplectically aspherical}
\newcommand{\ovl}{\overline}
\newcommand{\wt}{\widetilde}
\newcommand{\hamd}{Hamiltonian diffeomorphism}

\theoremstyle{plain}
\newtheorem{theo}{Theorem}
\newtheorem{prop}[theo]{Proposition}
\newtheorem{lemma}[theo]{Lemma}
\newtheorem{claim}[theo]{Claim}
\newtheorem{definition}[theo]{Definition}
\newtheorem*{notation*}{Notation}
\newtheorem*{notations*}{Notations}
\newtheorem{corol}[theo]{Corollary}
\newtheorem{conj}[theo]{Conjecture}

\newenvironment{demo}[1][]{\addvspace{8mm} \emph{Proof #1.
    ~~}}{~~~$\Box$\bigskip}

\newlength{\espaceavantspecialthm}
\newlength{\espaceapresspecialthm}
\setlength{\espaceavantspecialthm}{\topsep} \setlength{\espaceapresspecialthm}{\topsep}

\newenvironment{example}[1][]{
\vskip \espaceavantspecialthm \noindent \textsc{Example
#1.} }%
{\vskip \espaceapresspecialthm}

\newenvironment{question}[1][]{
\vskip \espaceavantspecialthm \noindent \textsc{Question
#1.} }%
{\vskip \espaceapresspecialthm}

\newenvironment{remark}[1][]{\refstepcounter{theo} 
\vskip \espaceavantspecialthm \noindent \textsc{Remark~\thetheo
#1.} }%
{\vskip \espaceapresspecialthm}

\def\bb#1{\mathbb{#1}} \def\m#1{\mathcal{#1}}

\def\momeg{(M,\omega)}
\def\co{\colon\thinspace}
\def\Homeo{\mathrm{Homeo}}
\def\Hameo{\mathrm{Hameo}}
\def\Diffeo{\mathrm{Diffeo}}
\def\Symp{\mathrm{Symp}}
\def\Sympeo{\mathrm{Sympeo}}
\def\id{\mathrm{id}}
\def\Im{\mathrm{Im}}
\newcommand{\norm}[1]{||#1||}
\def\Ham{\mathrm{Ham}}
\def\lagham#1{\mathcal{L}^\mathrm{Ham}({#1})}
\def\Hamtilde{\widetilde{\mathrm{Ham}}}
\def\cOlag#1{\mathrm{Sympeo}({#1})}
\def\Crit{\mathrm{Crit}}
\def\dim{\mathrm{dim}}
\def\Spec{\mathrm{Spec}}
\def\osc{\mathrm{osc}}
\def\Cal{\mathrm{Cal}}
\def\Fix{\mathrm{Fix}}
\def\det{\mathrm{det}}
\def\Ker{\mathrm{Ker}}
\def\Supp{\mathrm{Supp}}

\title[]{On $C^0$-continuity of the spectral norm for symplectically non-aspherical manifolds}
\author{Yusuke Kawamoto}

\address{Yusuke Kawamoto: D\'epartement de Math\'ematiques et Applications, \'Ecole Normale Sup\'erieure, 45 rue d'Ulm, F 75230 Paris cedex 05}
\email{yusukekawamoto81@gmail.com, yusuke.kawamoto@ens.fr}

\maketitle

\begin{abstract}
The purpose of this paper is to study the relation between the $C^0$-topology and the topology induced by the spectral norm on the group of Hamiltonian diffeomorphisms of a closed symplectic manifold. Following the approach of Buhovsky-Humili\`ere-Seyfaddini, we prove the $C^0$-continuity of the spectral norm for complex projective spaces and negative monotone symplectic manifolds. The case of complex projective spaces provides an alternative approach to the $C^0$-continuity of the spectral norm proven by Shelukhin. We also prove a partial $C^0$-continuity of the spectral norm for rational symplectic manifolds. Some applications such as the Arnold conjecture in the context of $C^0$-symplectic topology are also discussed.
 \end{abstract}

\tableofcontents

\section{Introduction}\label{sec:introduction}
The study of topological properties of the group of Hamiltonian diffeomorphisms of a symplectic manifold has been one of the central topics in symplectic topology. The group of Hamiltonian diffeomorphisms is known to carry different metrics such as the Hofer metric, the spectral metric and the $C^0$-metric and their relations have been studied extensively. This paper studies the relation between the $C^0$-topology and the topology induced by the spectral metric. More precisely, we study the $C^0$-continuity of the spectral norm which has been already verified for certain cases: for $\mathbb{R}^{2n}$ by Viterbo \cite{[Vit92]}, for closed surfaces by Seyfaddini \cite{[Sey13-1]}, for symplectically aspherical manifolds by Buhovsky-Humili\`ere-Seyfaddini \cite{[BHS18b]} and for complex projective spaces by Shelukhin \cite{[Sh18]}.  In this paper, we push the method developed by Buhovsky-Humili\`ere-Seyfaddini \cite{[BHS18b]} forward to the symplectically non-aspherical setting and confirm the $C^0$-continuity of the spectral norm for negative monotone symplectic manifolds. We also obtain a partial $C^0$-continuity of the spectral norm for rational symplectic manifolds and an alternative proof of the $C^0$-continuity of the spectral norm for complex projective spaces.

\subsection{Set-up}
Throughout this paper, $(M,\omega)$ will denote a closed symplectic manifold. A symplectic manifold $(M,\omega)$ is called
\begin{itemize}
\item rational if $\langle \omega,\pi_2(M)  \rangle =\lambda_0 \mathbb{Z}$ for some constant $\lambda_0>0$. We refer to the constant $\lambda_0$ as the rationality constant.
\item monotone (resp. negative monotone) if $\omega|_{\pi_2(M)}=\lambda \cdot c_1|_{\pi_2(M)}$ for some positive (resp. negative) constant $\lambda$ where $c_1:=c_1(TM)$ denotes the first Chern class of $TM$. We refer to the constant $\lambda$ as the monotonicity constant. 
\item symplectically aspherical when $\omega|_{\pi_2(M)}= c_1|_{\pi_2(M)}=0$.
\end{itemize}

The positive generator of $\langle c_1,\pi_2(M)  \rangle \subset \mathbb{Z}$ is called the minimal Chern number $N_M$ i.e. 
$$\langle c_1,\pi_2(M)  \rangle=N_M \mathbb{Z},\ N_M>0.$$

\begin{example}
\begin{itemize}
\item The complex projective space equipped with the standard Fubini-Study form $(\mathbb{C}P^n,\omega_{FS})$ is monotone and its minimal Chern number $N_{\mathbb{C}P^n}$ is $n+1$.
\item The degree $k$ Fermat hypersurfaces of $\mathbb{C}P^{n+1}$
$$M:=\{(z_0:z_1:\cdots:z_n) \in \mathbb{C}P^{n+1}:z_0 ^k+z_1 ^k+z_2 ^k+\cdots+z_n ^k=0\}$$
is negative monotone for $k>n+1$. The minimal Chern number $N_M$ is $|k-(n+2)|$ if $k \neq n+2$ and $+\infty$ otherwise.
\end{itemize}
\end{example}

A Hamiltonian $H$ on $M$ is a smooth time dependent function $H: \mathbb{R}/\mathbb{Z} \times M\to \mathbb{R}$. We define its Hamiltonian vector field $X_{H_t}$ by $-dH_t=\omega(X_{H_t},\cdot).$ The Hamiltonian flow of $H$, denoted by $(\phi_H ^t)_{t \in \R}$, is by definition the flow of $X_H$. Its time-one map $\phi_H ^{1}$ is called the Hamiltonian diffeomorphism of $H$ and will be denoted by $\phi_H$. The set of Hamiltonian diffeomorphisms and its universal cover will be denoted respectively by $\Ham(M,\omega)$ and $\widetilde{\Ham}(M,\omega)$.

\subsection{$C^0$-topology}
We define the $C^0$-metric by 
$$\phi, \psi \in \Ham(M,\omega),\ d_{C^0}(\phi,\psi):=\max_{x\in M}d(\phi(x),\psi(x))$$
where $d$ denotes the distance on $M$ induced by the Riemannian metric on $M$. Note that the topology induced by the $C^0$-distance is independent of the choice of a Riemannian metric. We denote the $C^0$-closure of $\Ham(M,\omega)$ in the group of homeomorphisms of $M$ by $\overline{\Ham}(M,\omega)$. Their elements are called the Hamiltonian homeomorphisms. Hamiltonian homeomorphisms are central objects in $C^0$-symplectic topology.

\subsection{Spectral norms}
We roughly outline the notion of the spectral norm. For precise definitions, we refer to Section \ref{spec inv}. First of all, a Hamiltonian $H\in C^{\infty}(\mathbb{R}/\mathbb{Z}\times M,\mathbb{R})$ is called non-degenerate if for each $x \in \Fix (\phi_H)$, the set of eigenvalues of $d \phi_H (x):T_x M \to T_x M$ does not include $1$. For a non-degenerate Hamiltonian $H$ and a fixed ground field $\K$ (see Remark \ref{ground field} for more information about the choice of a ground field), one can define the Floer homology group $HF(H)=HF(H;\K)$ as well as their filtration with respect to the action functional which will be denoted by $\{ HF^\tau (H) \}_{\tau \in \R}$. For each $\tau \in \R$, we denote the natural map induced by the inclusion map of the chain complex by $i_\ast ^\tau$:
$$i_\ast ^\tau: HF^\tau (H) \to HF (H).$$
The quantum homology ring is defined by
$$QH_\ast (M;\K) := H_\ast(M;\K) \otimes_\K \Lambda $$ 
where
$$\Gamma:=\pi_2(M)/\Ker(\omega) \cap \Ker(c_1),$$
$$\Lambda:= \{\sum_{A \in \Gamma} a_A \otimes e^A: a_A \in \K, (\forall \tau \in \R, \#\{a_A \neq0:\omega(A) <\tau \}<+\infty) \}.$$
The ring structure of $QH_\ast(M;\K)$  is given by the quantum product $\ast$: for its definition, see Section \ref{QH}). Floer homology group is ring isomorphic to the quantum homology ring $QH_\ast (M;\K)$ by the PSS-isomorphism
$$\Phi_{PSS,H;\K}: QH_\ast (M;\K) \to HF(H).$$
For a Hamiltonian $H$ and $a \in  QH_\ast (M;\K) \backslash \{0\} $, the spectral invariant of $H$ and $a$ is defined by
$$c(H,a):= \inf \{\tau : \Phi_{PSS,H;\K} (a) \in \Im (i_\ast ^\tau) \}.$$
The spectral norm of a Hamiltonian $H$ is defined by
$$\gamma(H):=c(H,[M])+c(\overline{H},[M])$$
where $\overline{H}(t,x):=-H(t,\phi_H ^t(x))$ which is a Hamiltonian that generates the Hamiltonian flow 
$$t \mapsto (\phi_H ^t) ^{-1}.$$

Since $\gamma$ is invariant under homotopy i.e. if $\phi_H ^t \sim \phi_G ^t$ rel. endpoints, then $\gamma(H)=\gamma(G)$, it can be seen as a map defined on the universal cover of $\Ham(M,\omega)$, namely 
$$\gamma:\widetilde{\Ham}(M,\omega)\to \mathbb{R}.$$ 

We define spectral norms for Hamiltonian diffeomorphisms by
$$\gamma:\Ham(M,\omega)\to \mathbb{R},$$
$$\gamma(\phi):=\inf_{\phi=\phi_H} \gamma(H).$$

\subsection{Main results}
Throughout the paper $\lambda_0>0$ denotes the rationality constant i.e. $\langle \omega, \pi_2(M) \rangle =\lambda_0 \mathbb{Z}$. We first state our result for rational symplectic manifolds.

\begin{theo}\label{rational}
Let $(M,\omega)$ be a rational symplectic manifold. For any $\varepsilon>0,$ there exists $\delta>0$ such that if $d_{C^0}(\id,\phi_H)<\delta,$ then 
$$|\gamma(H)-l\cdot \lambda_0|<\varepsilon$$
for some integer $l\in \mathbb{Z}$ depending on $H$.
\end{theo}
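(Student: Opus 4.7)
The plan is to adapt the Buhovsky-Humili\`ere-Seyfaddini \cite{[BHS18-2]} strategy, developed for symplectically aspherical manifolds, to accommodate pseudo-holomorphic sphere bubbling. The heuristic underlying the theorem is as follows: when $\phi_H$ is $C^0$-close to the identity, every contractible $1$-periodic orbit of $H$ is close to a constant loop, and the action of such a capped orbit depends on the choice of capping only modulo $\lambda_0 \mathbb{Z}$. Consequently, $\Spec(H)$ should lie in a $\delta'$-neighbourhood of $\lambda_0 \mathbb{Z}$, where $\delta' \to 0$ as $\delta \to 0$. Since $c(H,[M])$ and $c(\overline{H},[M])$ are by definition elements of the respective action spectra, summing the two would yield $\gamma(H) \in \lambda_0 \mathbb{Z} + (-\varepsilon,\varepsilon)$.

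The technical core is to upgrade this heuristic to a quantitative estimate using the BHS mechanism. In \cite{[BHS18-2]} the bound $c(H,[M]) \in (-\varepsilon/2,\varepsilon/2)$ is obtained by sandwiching $H$ between suitable autonomous model Hamiltonians, such as bump functions supported in small displaceable Darboux balls whose radius is controlled by $\delta$, and by invoking monotonicity of spectral invariants together with an energy-capacity inequality. In our rational setting, the same sandwich argument can be run, but the continuation maps used to compare the Floer complex of $H$ with those of the model Hamiltonians are twisted by pseudo-holomorphic spheres whose $\omega$-areas lie in $\lambda_0 \mathbb{Z}$. The output of the comparison is therefore $c(H,[M]) \in \lambda_0 \mathbb{Z} + (-\varepsilon/2,\varepsilon/2)$ rather than $c(H,[M]) \in (-\varepsilon/2,\varepsilon/2)$, and the symmetric bound on $c(\overline{H},[M])$ yields the desired conclusion. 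To pass from a globally $C^0$-small $\phi_H$ to one supported in a small ball, I would fragment $\phi_H$ into finitely many pieces, each supported in a displaceable Darboux chart of diameter controlled by $\delta$, then invoke the triangle inequality $\gamma(\phi\psi) \leq \gamma(\phi)+\gamma(\psi)$ together with the fact that a bounded sum of elements of $\lambda_0 \mathbb{Z} + (-\eta,\eta)$ still lies in $\lambda_0 \mathbb{Z} + (-N\eta, N\eta)$.

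The main obstacle I expect is the careful bookkeeping of sphere bubbles in the continuation maps underlying the sandwich argument: one must verify that the action shifts produced by these maps, even when Floer trajectories break off pseudo-holomorphic spheres, always lie in $\lambda_0 \mathbb{Z}$ up to a controlled error. A secondary issue is to ensure that the number of pieces required in the fragmentation is bounded uniformly in terms of $\delta$ and the geometry of $\momeg$, so that the accumulated local error stays below $\varepsilon$; this uniformity is a standard ingredient in the BHS framework and should transfer directly. Finally, a density argument in the Hofer or $C^\infty$-topology reduces the statement from non-degenerate Hamiltonians, where $c(H,[M])$ is defined through Floer homology, to arbitrary smooth Hamiltonians satisfying $d_{C^0}(\id,\phi_H) < \delta$.
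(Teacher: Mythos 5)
Your heuristic that ``$\Spec(H)$ should lie in a $\delta'$-neighbourhood of $\lambda_0\mathbb{Z}$'' is false, and consequently the claim that $c(H,[M])\in\lambda_0\mathbb{Z}+(-\varepsilon/2,\varepsilon/2)$ does not hold. The action $\mathcal{A}_H([z,w])=\int_0^1 H_t(z(t))\,dt-\int_{D^2}w^\ast\omega$ contains the Hamiltonian integral term, which is not controlled by $d_{C^0}(\id,\phi_H)$: replacing $H$ by $H+c(t)$ for any function $c$ leaves $\phi_H$ unchanged but shifts the entire spectrum by $\int_0^1 c(t)\,dt$, so $\Spec(H)$ and $c(H,[M])$ can each be translated to any value whatsoever while $d_{C^0}(\id,\phi_H)$ remains zero. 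The only quantity that is invariant under such shifts is the \emph{sum} $\gamma(H)=c(H,[M])+c(\overline{H},[M])$, and it is precisely to access this sum directly that one cannot estimate the two summands separately. This is why the paper (following BHS) passes to the doubling trick: one works with $\overline{H}\oplus H$ on $M\times M$, where $\gamma(H)$ appears as a single spectral invariant $c(\overline{H}\oplus H,[M\times M])$.

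The fragmentation route you propose is also not what the paper does and has a concrete obstruction you mention but do not resolve: the number $N$ of Darboux pieces needed grows without bound as $\delta\to 0$, so a local error of $\eta$ per piece only bounds $\gamma(H)$ away from $\lambda_0\mathbb{Z}$ by $N\eta$, and there is no mechanism that keeps $N\eta$ small. The paper avoids this entirely. Its actual argument invokes Proposition~\ref{lem2} to correct $\phi\times\phi^{-1}$ by a $\gamma$-small $\phi_G$ so that the result is the identity on a fixed ball $B\times B$, then conjugates $H$ (Proposition~\ref{prop:around an integer}) so that a fixed point sits at the centre of $B$, adds a $C^2$-small Morse function $F$ supported in $B\times B$, and computes the action spectrum of $(\overline{H'}\oplus H')\#G\#F$ explicitly: its fixed points are exactly $\Crit(F)$, and the action of any capped such orbit is $F(x,y)$ plus an $(\omega\oplus\omega)$-area, hence lies in $\lambda_0\mathbb{Z}+(-\varepsilon,\varepsilon)$. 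There is no sandwich of model Hamiltonians and no fragmentation; the control is via the explicit fixed-point set of a single perturbed map.
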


Theorem \ref{rational} gives us the candidates of the value of the spectral norm of a $C^0$-small $\phi_H$. When the values of spectral norms are bounded by a number strictly smaller than $\lambda_0$, then Theorem \ref{rational} implies the $C^0$-continuity. The complex projective space $\C P^n$ meets this condition.

\begin{theo}\label{C0-conti of gamma:CPn}
Let $(\mathbb{C}P^n,\omega_{FS})$ be the complex projective space equipped with the Fubini-Study form.
\begin{enumerate}
\item For any $\phi \in \Ham(\mathbb{C}P^n,\omega_{FS}),$ 
$$\gamma (\phi)\leq \frac{n}{n+1}\cdot \lambda_0$$
where $\lambda_0$ denotes the rationality constant.
\item The spectral norm is $C^0$-continuous i.e.
$$\gamma :(\Ham(\mathbb{C}P^n,\omega_{FS}),d_{C^0}) \to \mathbb{R}$$
is continuous. Moreover, $\gamma$ extends continuously to $\overline{\Ham}(\mathbb{C}P^n,\omega_{FS}).$
\end{enumerate}
\end{theo}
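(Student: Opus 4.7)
The plan is to deduce part (2) from part (1) combined with Theorem \ref{rational}, so part (1) carries the analytic weight.

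For part (1), my strategy is to extract a uniform bound on $\gamma$ from the quantum product in $QH_\ast(\mathbb{C}P^n;\K) \cong \Lambda[h]/(h^{n+1}-q)$, following the Entov--Polterovich/Usher machinery. The key algebraic input is the identity $[pt]^{n+1}=q^n[M]$, equivalently $[pt]^{-1} = q^{-n}[pt]^n$ in the quantum ring, which provides an $[M]$-valued product of $n+1$ copies of $[pt]$ with Novikov shift exactly $n\lambda_0$. Combining this with the triangle inequality $c(\phi\psi,a\ast b)\le c(\phi,a)+c(\psi,b)$ and the duality relating $c(\tilde\phi^{-1},[M])$ to $-\inf_b c(\tilde\phi,b)$ over classes $b$ with nonzero $[M]$-component, one bounds $c(\tilde\phi,[M])+c(\tilde\phi^{-1},[M])$ on $\widetilde{\Ham}(\mathbb{C}P^n,\omega_{FS})$ modulo the covering action $\pi_1(\Ham(\mathbb{C}P^n,\omega_{FS}))\curvearrowright\widetilde{\Ham}$. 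Since for $\mathbb{C}P^n$ this action is generated by the standard Hamiltonian $S^1$-subgroup and shifts spectral invariants by integer multiples of the monotonicity constant $\lambda_0/(n+1)$, optimizing over lifts yields the bound $\gamma(\phi)\le\tfrac{n}{n+1}\lambda_0$.

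For part (2), fix $\varepsilon\in(0,\lambda_0/(n+1))$. By Theorem \ref{rational}, there is $\delta>0$ such that $d_{C^0}(\id,\phi_H)<\delta$ forces $|\gamma(H)-l\lambda_0|<\varepsilon/2$ for some integer $l=l(H)\in\Z$. Given $\phi\in\Ham(\mathbb{C}P^n,\omega_{FS})$ with $d_{C^0}(\id,\phi)<\delta$, pick Hamiltonians $H_k$ with $\phi_{H_k}=\phi$ and $\gamma(H_k)\searrow\gamma(\phi)$; by part (1), $\gamma(H_k)<\tfrac{n}{n+1}\lambda_0+\varepsilon/4<\lambda_0-\varepsilon/4$ for $k$ large, while $\gamma(H_k)\ge 0$ always. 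These two inequalities together with $|\gamma(H_k)-l(H_k)\lambda_0|<\varepsilon/2$ force $l(H_k)=0$ for $k$ large, hence $\gamma(H_k)<\varepsilon/2$ and therefore $\gamma(\phi)\le\varepsilon$. This establishes $C^0$-continuity at $\id$. Right-invariance of $d_{C^0}$ together with the triangle inequality $|\gamma(\phi)-\gamma(\psi)|\le\gamma(\phi\psi^{-1})$ upgrade it to continuity at every point of $\Ham(\mathbb{C}P^n,\omega_{FS})$. Finally, $\gamma$ extends to $\overline{\Ham}(\mathbb{C}P^n,\omega_{FS})$ by setting $\gamma(\phi):=\lim_k\gamma(\phi_k)$ for any $d_{C^0}$-Cauchy sequence $\phi_k\in\Ham$ with limit $\phi$; well-definedness is immediate from the uniform continuity of $\gamma$ on $\Ham$ just established.

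The principal obstacle is part (1). The quantum-product argument makes some uniform bound on $\gamma$ morally clear, but pinning down the sharp numerical value $\tfrac{n}{n+1}\lambda_0$ requires a careful accounting of the action shifts produced on spectral invariants by the generator of $\pi_1(\Ham(\mathbb{C}P^n))$---concretely, by the standard Hamiltonian torus action---for both $c(\cdot,[M])$ and $c(\cdot,[pt])$, and a verification that after taking the infimum over lifts one indeed lands inside a fundamental domain of width $\lambda_0/(n+1)$. Once (1) is in hand, the $C^0$-continuity in (2) is forced by the small integer-counting argument above on top of Theorem \ref{rational}, in close analogy with the Buhovsky--Humili\`ere--Seyfaddini strategy for aspherical manifolds.
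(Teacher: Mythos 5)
The overall architecture you propose matches the paper's: part (2) is deduced from part (1) together with Theorem~\ref{rational} by an integer-counting argument, and this step is correct (it is exactly what the paper records as Corollary~\ref{bound implies conti}). The gap is in part (1), and you in fact flag it yourself. Listing the quantum identity $h^{\ast(n+1)}=s^{-1}[M]$, the triangle inequality, and Poincar\'e duality for spectral invariants makes a bound on $\gamma$ plausible, but the sentence ``shifts spectral invariants by integer multiples of the monotonicity constant $\lambda_0/(n+1)$, optimizing over lifts yields the bound'' is not a proof and is not even quite the right picture: the Seidel shift $c(\psi^\ast H,a)=c(H,\mathcal{S}_{\psi,\sigma}\ast a)+\mathrm{const}(\psi,\sigma)$ does not move $c(H,[M])$ by a clean multiple of $\lambda_0/(n+1)$; it changes the homology class, not the action level, and the constant depends on the section class.

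What the paper actually does (Theorem~\ref{monotone bound}, specialized to $\mathbb{C}P^n$ with the standard $S^1$-loop $\psi$, $\mathcal{S}_{\psi,\sigma}=h=[\mathbb{C}P^{n-1}]$, $h^{\ast n}=[pt]$, $h^{\ast(n+1)}=s^{-1}[M]$, so $k=n$, $k'=n+1$, $l'=1$) is to combine the Seidel shift with $c(\overline{H},[M])=-c(H,[pt])$ to write
\[
\gamma\bigl((\psi^{j})^{\ast}H\bigr)=c\bigl(H,h^{\ast j}\bigr)-c\bigl(H,h^{\ast(j+k)}\bigr),\qquad j=0,\dots,k'-1,
\]
and then sum. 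The sum telescopes, and $h^{\ast k'}=s^{-l'}[M]$ converts the leftover term into $kl'\lambda_0$, giving
\[
\sum_{j=0}^{k'-1}\gamma\bigl((\psi^{j})^{\ast}H\bigr)=kl'\lambda_0.
\]
Since every $(\psi^{j})^{\ast}H$ generates $\phi$, each summand dominates $\gamma(\phi)$, so $k'\gamma(\phi)\leq kl'\lambda_0$, i.e.\ $\gamma(\phi)\leq\frac{n}{n+1}\lambda_0$. Your ``optimize over lifts'' heuristic is precisely this last inequality, but the telescoping identity is the missing mechanism: without it you do not pin down the constant, and part (2) requires the sharp numerical bound strictly below $\lambda_0$. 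So the proposal is incomplete at exactly the point it declares to be the principal obstacle.
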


\begin{remark}
\begin{enumerate}
\item Theorem \ref{C0-conti of gamma:CPn} is already proven in other papers: (1) appears as Theorem G (2) in \cite{[KS18]} and (2) appears as Theorem C in \cite{[Sh18]}. Shelukhin's argument in \cite{[Sh18]}, which is different from ours, is based on barcode techniques and is specific for $\C P^n$. 

\item We will prove an a priori more general result in Section \ref{monotone proof}. 
\end{enumerate}
\end{remark}

For general monotone symplectic manifolds, instead of the $C^0$-continuity, we only obtain the following $C^0$-control of the spectral norm.

\begin{theo}\label{monotone bound extra}
Let $(M,\omega)$ be a monotone symplectic manifold. 
\begin{enumerate}
\item For any $\varepsilon>0,$ there exists $\delta>0$ such that if $d_{C^0}(\id,\phi_H)<\delta,$ then
$$\gamma(H)< \frac{\dim(M)}{N_M}\cdot \lambda_0+\varepsilon.$$
\item If $N_M> \dim(M)$, then the spectral norm is $C^0$-continuous i.e.
$$\gamma:(\Ham(M,\omega),d_{C^0}) \to \mathbb{R}$$
is continuous. Moreover, $\gamma$ extends continuously to $\overline{\Ham}(M,\omega).$
\end{enumerate}
\end{theo}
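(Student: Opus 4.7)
I would derive the theorem by combining Theorem~\ref{rational} with a degree/Novikov-shift bound on the integer it produces. Monotonicity gives $\lambda_0 = \lambda N_M$, where $\lambda > 0$ is the monotonicity constant, so $(M,\omega)$ is in particular rational and Theorem~\ref{rational} applies directly: for every $\varepsilon>0$ there is $\delta>0$ such that $d_{C^0}(\id,\phi_H)<\delta$ forces $|\gamma(H)-l\lambda_0|<\varepsilon/2$ for some integer $l=l(H)\geq 0$, where non-negativity is automatic because $\gamma\geq 0$.

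For Part (1) it then suffices to prove $l(H)\leq \dim(M)/N_M$. My approach would be to exploit the compatibility between the action filtration and the Conley--Zehnder grading on Floer homology with Novikov coefficients: the shift by $e^{A}\in\Lambda$ with $c_1(A)=N_M$ raises filtration by $\omega(A)=\lambda_0$ and lowers degree by $2N_M$. The PSS image of $[M]\in QH_{\dim(M)}(M;\K)$ lives in Floer degree $\dim(M)$ mod $2N_M$, and its possible filtration levels are constrained by how many Novikov shifts separate these representatives from the chain-level cycle produced in the proof of Theorem~\ref{rational}. Tracking the degree through each shift and using that no representative can exceed the top degree of $QH_\ast(M;\K)$ caps the iteration count at $l\leq \dim(M)/N_M$. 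Combining this with Theorem~\ref{rational} yields $\gamma(H)\leq l\lambda_0+\varepsilon/2 < \frac{\dim(M)}{N_M}\lambda_0+\varepsilon$.

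For Part (2), the hypothesis $N_M>\dim(M)$ gives $\dim(M)/N_M<1$. Pick $\varepsilon_0>0$ small enough that $\frac{\dim(M)}{N_M}\lambda_0 + \varepsilon_0 < \lambda_0 - \varepsilon_0$; applying Part~(1) and Theorem~\ref{rational} simultaneously to $\varepsilon_0$ forces the non-negative integer $l(H)$ to vanish, so $\gamma(H)<\varepsilon_0$. This establishes continuity of $\gamma$ at $\id$; the invariance-type bound $|\gamma(\phi)-\gamma(\psi)|\leq \gamma(\phi\psi^{-1})$ then propagates continuity to all of $\Ham(M,\omega)$, and uniformity of the $\varepsilon$--$\delta$ estimate extends $\gamma$ continuously to the $C^0$-closure $\overline{\Ham}(M,\omega)$.

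\textbf{Main obstacle.} The principal difficulty is the chain-level degree argument that pins down $l(H)$. Merely invoking the $\Z/2N_M\Z$-grading on $QH_\ast(M;\K)$ is not sufficient, because one needs to control specific Floer cycles at the filtration predicted by Theorem~\ref{rational}, not just abstract homology classes. I expect this compatibility between the grading and the explicit cycles produced in the proof of Theorem~\ref{rational} to be the technical heart of the argument, and the place where monotonicity (as opposed to mere rationality) is genuinely used.
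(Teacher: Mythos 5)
Your Part (2) argument is the same as the paper's: monotonicity makes $(M,\omega)$ rational with $\lambda_0=\lambda N_M$, and $N_M>\dim(M)$ puts the bound from Part (1) strictly below $\lambda_0$, which together with Theorem~\ref{rational} forces the integer to vanish; this is precisely Corollary~\ref{bound implies conti} in the paper. The issue is Part~(1), where your plan diverges from the paper and contains a genuine gap.

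You propose to deduce Part (1) from Theorem~\ref{rational} by bounding the integer $l(H)$ through a Novikov-shift degree argument. But the proof of Theorem~\ref{rational} (Theorem~\ref{alternative around an integer}) only tracks \emph{action}: it produces the integer $l$ from the $\omega$-areas of correction spheres $A_1,A_2\in\pi_2(M\times M)$ with no control on their Chern numbers. In the merely rational setting there is no relation between $\omega(A)$ and $c_1(A)$, and correspondingly the proof never touches the Conley--Zehnder grading, so $l$ is a priori unbounded. You acknowledge that a $\Z/2N_M\Z$-grading argument on $QH_\ast$ is insufficient and that one must control the Floer cycles at the specified filtration level; but this is exactly the content that needs to be supplied, and ``tracking the degree through each shift'' does not give it. In effect, to make your route rigorous you would have to re-run the proof of Theorem~\ref{rational} while simultaneously tracking $\mu_{CZ}$ through the same steps --- at which point you have essentially rewritten the paper's actual argument.

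The paper does not pass through Theorem~\ref{rational} for Part (1). Instead it proves a direct estimate (Proposition~\ref{prop1}): after reducing by Proposition~\ref{lem2} to a Hamiltonian on $M\times M$ that fixes a ball pointwise, it perturbs by a small Morse function, expresses the spectral invariants of $H\#f$ and $\ovl{H}\#f$ via $c(\cdot,[M])\in\Spec$ at Morse critical points, and records the paired equations
\begin{align*}
c(H\# f,[M]) &= f(x)+\mathcal{A}_H([\phi_H^t(x),w_x])-\omega(A),\\
2n &= i(x)+2\mu([\phi_H^t(x),w_x])-2c_1(A),
\end{align*}
with the analogous pair for $\ovl{H}\#f$. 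Adding the action equations gives $\gamma(H)\lesssim -\omega(A+B)$, while adding the index equations and using $0\leq i(x),i(y)\leq 2n$ gives $-2n\leq c_1(A+B)\leq 0$; monotonicity then converts the Chern bound into $-\omega(A+B)\leq \frac{2n}{N_M}\lambda_0$. This simultaneous action/index bookkeeping, carried out at the level of the explicit Morse critical points, is precisely the missing ingredient in your sketch; if you want to follow your route you should prove a version of Proposition~\ref{prop1} rather than hoping to extract $l(H)$ abstractly from Theorem~\ref{rational}.
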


\begin{remark}
\begin{enumerate}
\item The author does not know any example satisfying the assumptions in Theorem \ref{monotone bound extra} (2). Note that Theorem \ref{monotone bound extra} (2) follows immediately from Theorem \ref{rational} and Theorem \ref{monotone bound extra} (1).

\item Theorem \ref{monotone bound extra} applies to spectral norms of any ground ring $R$ (i.e. a commutative ring with unit). See Remark \ref{ground field} for a comment on the choice of ground rings/fields.
\end{enumerate}
\end{remark}

We now consider the case of negative monotone symplectic manifolds.

\begin{theo}\label{neg mono}
Let $(M,\omega)$ be a negative monotone symplectic manifold.
\begin{enumerate}
\item For any $\varepsilon>0$, there exists $\delta>0$ such that if $d_{C^0}(\id,\phi_H)<\delta$, then 
$$\gamma(H)<\varepsilon.$$
In particular, if $\phi_H=\phi_G$ for $H,G\in C^{\infty}(\mathbb{R}/\mathbb{Z}\times M,\mathbb{R})$, then 
$$\gamma(H)=\gamma(G)$$

i.e. $\gamma:\widetilde{\Ham}(M,\omega)\to \mathbb{R}$
descends to 
$$\gamma:\Ham(M,\omega) \to \mathbb{R}.$$

\item The spectral norm is $C^0$-continuous i.e. 
$$\gamma:(\Ham(M,\omega),d_{C^0}) \to \mathbb{R}$$
is continuous. Moreover, $\gamma$ extends continuously to $\overline{\Ham}(M,\omega).$
\end{enumerate}
\end{theo}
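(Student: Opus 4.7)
The plan is to establish the main inequality of Part (1) first; the ``in particular'' claim of Part (1) and the entirety of Part (2) follow by now-standard arguments, which I outline briefly. For the descent claim: if $H,G$ satisfy $\phi_H=\phi_G$, let $F$ be any Hamiltonian representing the element $[H][G]^{-1}\in\widetilde{\Ham}(M,\omega)$; then $\phi_F=\id$, so the main inequality applied to $F$ (with $d_{C^0}(\id,\phi_F)=0$) forces $\gamma(F)=0$, and the triangle inequality for $\gamma$ on $\widetilde{\Ham}(M,\omega)$ gives $|\gamma(H)-\gamma(G)|\leq\gamma(F)=0$. For Part (2), given $\phi\in\Ham(M,\omega)$ and $\psi$ with $d_{C^0}(\phi,\psi)$ small, one writes $\psi=\phi\circ(\phi^{-1}\psi)$, applies Part (1) to the $C^0$-small factor $\phi^{-1}\psi$, and uses the triangle inequality to deduce continuity of $\gamma$; the continuous extension to $\overline{\Ham}(M,\omega)$ follows by uniform continuity, as in the BHS template.

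To prove the main inequality of Part (1), I combine Theorem \ref{rational} with a uniform upper bound on $\gamma(H)$ strictly smaller than $\lambda_0$ for $\phi_H$ sufficiently $C^0$-small. Theorem \ref{rational} applied with $\varepsilon'<\lambda_0/2$ produces $\delta>0$ such that $d_{C^0}(\id,\phi_H)<\delta$ forces $|\gamma(H)-l\lambda_0|<\varepsilon'$ for some non-negative integer $l$ (non-negativity coming from $\gamma\geq 0$). It thus suffices to prove: for every $\eta>0$, there is $\delta_0>0$ such that $d_{C^0}(\id,\phi_H)<\delta_0$ implies $\gamma(H)<\lambda_0-\eta$. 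Choosing $\eta>\varepsilon'$ and intersecting both conditions forces $l=0$, whence $\gamma(H)<\varepsilon'$.

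The uniform upper bound is the heart of the proof and relies crucially on negative monotonicity. In this setting, the generator $A_0\in\Gamma$ satisfies $c_1(A_0)=N_M>0$ and $\omega(A_0)=-\lambda_0<0$, so multiplication by the Novikov variable $e^{A_0}$ on $QH_\ast(M;\K)$ simultaneously decreases action by $\lambda_0$ and decreases degree by $2N_M$. The PSS image $\Phi_{PSS,H;\K}([M])$ lies in Floer degree $2n=\dim M$. After a $C^2$-small Morse perturbation of $H$ that makes it non-degenerate (possible because $\phi_H$ is $C^0$-close to $\id$), the degree-$2n$ generators of $CF(H)$ with trivial capping are in bijection with maxima of a small Morse function and carry actions close to zero; generators with non-trivial capping are severely restricted by grading, since $c_1>0$ cappings lower the degree while $c_1<0$ cappings raise it. Carefully tracking these contributions, together with the symmetric analysis for $c(\overline{H},[M])$, yields the desired bound $\gamma(H)<\lambda_0-\eta$.

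The main obstacle is precisely this uniform upper bound. The BHS argument in the symplectically aspherical setting avoids the Novikov ring entirely; here the full quantum structure must be carried through the estimates, and the controls on $c(H,[M])$ and $c(\overline{H},[M])$ must both be derived from just the sign asymmetry built into negative monotonicity. In particular, $c(\overline{H},[M])$ does not follow formally from $c(H,[M])$ because the action functional of $\overline{H}$ is not the pointwise negative of that of $H$; a parallel but separate analysis is required. Assembling the two controls into the single inequality $\gamma(H)<\lambda_0-\eta$ is the decisive technical step.
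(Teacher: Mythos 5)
There is a genuine gap. Your high-level reduction via Theorem~\ref{rational} is logically sound — indeed it parallels the way the paper deduces Theorem~\ref{monotone bound extra}~(2) from Theorem~\ref{rational} plus a uniform bound — but the bound $\gamma(H)<\lambda_0-\eta$ that carries the entire weight of your argument is not established, and your sketch of it contains steps that are false as stated. The central assertion, that after a $C^2$-small perturbation ``the degree-$2n$ generators of $CF(H)$ with trivial capping are in bijection with maxima of a small Morse function and carry actions close to zero,'' does not follow from $d_{C^0}(\id,\phi_H)$ being small. $C^0$-smallness of the time-one map gives no control whatsoever on the 1-periodic orbit set, its Morse/Conley--Zehnder indices, or its action values; the Floer complex of a $C^0$-small Hamiltonian diffeomorphism can be arbitrarily complicated. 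This is precisely the difficulty the whole argument has to overcome, and you have assumed it away.

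The missing idea is the BHS mechanism of the paper: Proposition~\ref{lem2} produces, for a $C^0$-small $\phi$, a correcting diffeomorphism $\phi_G$ on $M\times M$ that is both $C^0$- and $\gamma$-small and makes $(\phi\times\phi^{-1})\circ\phi_G$ equal to the identity on an open ball $B\times B$. Only then can one compose with a $C^2$-small Morse function $f$ supported in $B\times B$ and identify the fixed points of the composition with $\Crit(f)$, so that spectral values lie in a controlled sub-spectrum. After that, Proposition~\ref{prop1} extracts the index constraint $-2n\leq c_1(A+B)\leq 0$ from the two Conley--Zehnder index equations, and negative monotonicity ($\omega=\lambda c_1$ with $\lambda<0$) then forces $-\omega(A+B)\leq 0$, giving $\gamma(H)\leq 4\varepsilon$ directly — a far stronger conclusion than your target $\gamma(H)<\lambda_0-\eta$, and one that renders the detour through Theorem~\ref{rational} unnecessary in the negative monotone case. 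Your outline contains none of this: no passage to $M\times M$, no deformation making the map identity on a ball, and no derivation of the index constraint. The phrases ``carefully tracking these contributions'' and ``a parallel but separate analysis is required'' label exactly the places where the proof has not been done. As written, this is not a proof but a statement that the hard step remains.

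The peripheral pieces of your proposal — the descent of $\gamma$ to $\Ham(M,\omega)$, the continuity at a general $\phi$ via the triangle inequality, and the extension to $\overline{\Ham}(M,\omega)$ — are fine and match the paper.
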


\begin{remark}
The independence of the spectral norm of the choice of Hamiltonian follows also from Lemma 3.2.(iv) in \cite{[McD10]}.
\end{remark}

\subsection{Application 1: $C^0$-continuity of barcodes}
Barcodes are roughly speaking finite sets of intervals which are bounded from below but can be unbounded from above. The set of barcodes carries a metric called the bottleneck distance denoted by $d_{bot}$. Barcodes have been a common tool in topological data analysis. Polterovich-Shelukhin brought barcodes into symplectic topology in \cite{[PS16]} where they defined barcodes of (non-degenerate) Hamiltonian diffeomorphisms on symplectically aspherical manifolds and found applications to Hofer geometry. Later, as we will explain in Section \ref{barcodes}, the definition of barcodes was extended to Hamiltonian diffeomorphisms on (negative) monotone symplectic manifolds \cite{[LSV18]}, \cite{[PSS17]} after considering a completion of the set of barcodes with respect to the bottleneck distance which we will denote by $\widehat{Barcodes}$. An estimate of the bottleneck distance due to Kislev--Shelukhin \cite{[KS18]} (see the inequality \ref{KS ineq} in Section \ref{barcodes}) combined with the $C^0$-continuity of the spectral norm implies the $C^0$-continuity of barcodes for negative monotone symplectic manifolds.

\begin{corol}\label{neg mono barcode}
Let $(M,\omega)$ be a negative monotone symplectic manifold. The barcode map is $C^0$-continuous i.e.
$$B:(\Ham(M,\omega),d_{C^0})\to (\widehat{Barcodes},d_{bot})$$
is continuous. Moreover, $B$ extends continuously to $\overline{\Ham}(M,\omega).$
\end{corol}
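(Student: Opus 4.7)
The plan is to combine the $C^0$-continuity of the spectral norm established in Theorem \ref{neg mono} with the general stability of barcodes under the spectral norm. The key ingredient I would invoke is the fundamental estimate
$$d_{bot}(B(\phi), B(\psi)) \leq \gamma(\phi \psi^{-1}),$$
valid for $\phi, \psi \in \Ham(M,\omega)$. In the symplectically aspherical case this is due to Polterovich--Shelukhin \cite{[PS16]}; in the (negative) monotone case it is a consequence of the persistence-module formalism applied to Hamiltonian Floer homology as developed in \cite{[LSV18]} and \cite{[PSS17]}, with the crucial point that barcodes take values in the completion $\widehat{Barcodes}$ in order to accommodate the infinite bars arising from the Novikov action.

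Granted this stability estimate, $C^0$-continuity of $B$ on $\Ham(M,\omega)$ is essentially automatic. If $\phi_n \to \phi$ in the $C^0$-topology, then by the left-invariance of $d_{C^0}$ and the uniform continuity of $\phi^{-1}$ on the closed manifold $M$ one checks $d_{C^0}(\phi_n \phi^{-1}, \id) \to 0$. Applying Theorem \ref{neg mono} (1), we obtain $\gamma(\phi_n \phi^{-1}) \to 0$, and the stability estimate then yields $d_{bot}(B(\phi_n), B(\phi)) \to 0$.

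To extend $B$ to $\overline{\Ham}(M,\omega)$, I would fix $\phi \in \overline{\Ham}(M,\omega)$ and choose smooth approximations $\phi_n \to \phi$ in $C^0$. Since $\{\phi_n\}$ is $C^0$-Cauchy, the argument of the previous paragraph gives $\gamma(\phi_n \phi_m^{-1}) \to 0$ as $n, m \to \infty$, so $\{B(\phi_n)\}$ is Cauchy in $(\widehat{Barcodes}, d_{bot})$. Completeness of $\widehat{Barcodes}$ (built in by construction) produces a limit which I take as the definition of $B(\phi)$. Independence from the approximating sequence is the usual interleaving argument: two sequences $\phi_n, \phi_n' \to \phi$ may be merged into a single $C^0$-convergent sequence whose image under $B$ is Cauchy and therefore has a single limit in $\widehat{Barcodes}$. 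A standard $\varepsilon/3$ argument then promotes continuity from $\Ham(M,\omega)$ to $\overline{\Ham}(M,\omega)$.

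The main obstacle I anticipate is not Theorem \ref{neg mono} itself, which is assumed, but ensuring that the stability estimate in the first paragraph is correctly formulated in the negative monotone setting. Two issues must be handled with care: first, the bottleneck distance on $\widehat{Barcodes}$ involves infinite bars, and one must verify that the persistence-module isomorphism underlying the estimate respects the completion; second, the barcode must genuinely descend from $\widetilde{\Ham}(M,\omega)$ to $\Ham(M,\omega)$, which is the barcode analogue of the descent of $\gamma$ proved in Theorem \ref{neg mono} (1) and should follow by the same circle of ideas (or by Lemma 3.2.(iv) of \cite{[McD10]} cited in the remark following Theorem \ref{neg mono}). Once this is in hand, the remainder of the proof is formal.
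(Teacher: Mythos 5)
Your argument is essentially the paper's: both deduce the corollary from the $C^0$-continuity of $\gamma$ (Theorem~\ref{neg mono}) together with a Lipschitz-type stability estimate bounding $d_{bot}$ by the spectral norm of the difference. The paper cites this stability estimate in the form $d_{bot}(B(\phi),B(\psi))\leq \frac{1}{2}\gamma(\phi^{-1}\psi)$ due to Kislev--Shelukhin~\cite{[KS18]} rather than the references you give, but this is immaterial to the deduction, and your additional remarks on completeness of $\widehat{Barcodes}$ and descent of the barcode to $\Ham(M,\omega)$ fill in details the paper leaves implicit (the latter is handled in the paper's barcode preliminaries via the observation that Floer homologies of two Hamiltonians with the same time-one map agree up to a shift of grading and filtration).
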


\begin{remark}
Of course, Theorem \ref{C0-conti of gamma:CPn} (2) directly implies the $C^0$-continuity of barcodes in the case of $(\mathbb{C}P^n,\omega_{FS})$. This is proven by Shelukhin in Corollary 6 in \cite{[Sh18]}.
\end{remark}

\subsection{Application 2: The $C^0$-Arnold conjecture}\label{App2}

The Arnold conjecture has been historically one of the central topics in symplectic geometry.

\begin{conj}$($The Arnold conjecture$)$

Let $(M^{2n},\omega)$ be a closed symplectic manifold. 
\begin{enumerate}
\item For a non-degenerate $\phi \in \Ham(M,\omega),$
$$\# \Fix (\phi)\geq \sum_{j } \dim_{\mathbb{C}} H_j(M;\mathbb{C}).$$

\item For $\phi \in \Ham(M,\omega),$
$$\# \Fix(\phi)\geq cl(M)$$
where
$$cl(M):=\#\max \{k+1:\exists a_1,a_2,\cdots,a_k\in H_{\ast<2n}(M)\ s.t.\ a_1\cap a_2\cap \cdots \cap a_k\neq 0\}$$
and $\cap$ denotes the intersection product.
\end{enumerate}
\end{conj}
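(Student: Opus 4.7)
For part (1), the non-degenerate Arnold conjecture, I would follow the now-classical Floer-theoretic route. Construct the Floer chain complex $CF_\ast(H;\K)$ whose $\Lambda$-module generators are the capped 1-periodic orbits of the flow of $H$. Non-degeneracy of $\phi$ ensures that projecting a 1-periodic orbit to its time-$1$ endpoint is injective onto $\Fix(\phi)$, so the number of (uncapped) 1-periodic orbits equals $\#\Fix(\phi)$. The PSS isomorphism
$$\Phi_{PSS,H;\K}: QH_\ast(M;\K) \to HF(H;\K)$$
already recalled in the introduction then gives $\mathrm{rank}_\Lambda HF(H;\K) = \mathrm{rank}_\Lambda QH_\ast(M;\K) = \sum_j \dim_\K H_j(M;\K)$. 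Since the Floer complex has at least this many generators, one concludes
$$\#\Fix(\phi) \geq \sum_j \dim_\K H_j(M;\K),$$
and specializing $\K=\C$ yields (1).

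For part (2), the cup-length bound for possibly degenerate $\phi$, I would invoke the spectral-invariant machinery already set up in the introduction. Given classes $a_1,\dots,a_k \in H_{\ast<2n}(M)$ with $a_1 \cap \cdots \cap a_k \neq 0$, consider the $k+1$ spectral values
$$c(H,[M]),\ c(H,a_1),\ c(H,a_1 \cap a_2),\ \dots,\ c(H,a_1\cap\cdots\cap a_k).$$
Each is attained as the action of some 1-periodic orbit of $H$, equivalently as the action of some fixed point of $\phi$. Sub-additivity of spectral invariants with respect to the quantum product gives a monotone ordering of these $k+1$ values; when they are pairwise distinct they already produce $k+1$ geometrically distinct fixed points. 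To handle coincidences and to treat degenerate $\phi$, approximate by a sequence of non-degenerate $\phi_n \to \phi$ in $C^\infty$, apply the non-degenerate argument to each $\phi_n$, and extract a limit of $k+1$ fixed points using compactness of $\Fix(\phi_n)$ combined with a local Floer-theoretic multiplicity statement at any shared action value.

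The main obstacle is part (2), specifically the treatment of coincident spectral values and of degenerate $\phi$. The strict inequality $c(H,a_1\cap\cdots\cap a_j) < c(H,a_1\cap\cdots\cap a_{j+1})$ is not automatic, so counting distinct action values is insufficient, and one must bring in a local Floer multiplicity statement to extract the extra fixed points at a shared action value. This, combined with the need to prevent geometrically distinct non-degenerate fixed points from being identified in the $C^\infty$-limit, is the technical core of the argument and requires careful bookkeeping of cappings in the non-aspherical setting. By contrast, part (1) is a formal consequence of the Floer/PSS package once it is in place, and presents no conceptual difficulty beyond the construction of that package.
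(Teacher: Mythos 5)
This statement is a \emph{conjecture} in the paper, not a theorem: the paper states it only to frame its applications, and immediately records that part (1) was settled by Fukaya--Ono and Liu--Tian while part (2) is known only for symplectically aspherical manifolds (Floer, Schwarz), for $\mathbb{C}P^n$ (Fortune, Fortune--Weinstein), and for negative monotone manifolds with $N_M\geq n$ (L\^e--Ono). The paper offers no proof, so there is nothing to compare against; but since you have attempted one, a few remarks on where it breaks are in order.

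For part (1), your outline is the right \emph{shape} but hides the entire difficulty. On a general closed symplectic manifold, the Floer moduli spaces need not be cut out transversally because of multiply covered pseudoholomorphic spheres of negative Chern number, so one cannot simply ``construct the Floer chain complex'' and invoke PSS. Making the chain complex and the PSS map exist at all is precisely what requires virtual fundamental cycle technology (Kuranishi structures/polyfolds), and that is the content of \cite{[FO99]} and \cite{[LT98]}. Without that, the phrase ``once the package is in place'' is doing all the work, and the argument is circular. Also, this technology currently forces $\K$ to have characteristic zero (as the paper notes in Remark~\ref{ground field}), which is fine for the statement with $\K=\C$, but it is worth being aware that the formal-looking rank count is not innocent.

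For part (2), the gap is more serious: this version is open in general. The spectral-invariant cup-length argument you sketch is, essentially, Schwarz's argument for symplectically aspherical manifolds, and it fails outside that setting for a structural reason you do not address. The sub-additivity of spectral invariants in Proposition~\ref{prop spec inv}(3) is with respect to the \emph{quantum} product $\ast$, whereas $cl(M)$ is defined via the \emph{intersection} product $\cap$. When $\omega|_{\pi_2}\neq 0$ these products differ, and the chain of inequalities $c(H,[M])\geq c(H,a_1)\geq\cdots$ that you want does not follow; worse, in the quantum ring the ``product of many classes'' may leave $H_\ast(M)$ and acquire Novikov coefficients, which shifts the action by multiples of $\lambda_0$ and destroys the fixed-point count by cappings. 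The remaining difficulties you flag — strictness of the inequalities, coincident spectral values, passing to degenerate $\phi$, and preventing distinct fixed points from merging in the $C^\infty$ limit — are not ``bookkeeping'': the first is exactly the Lusternik--Schnirelmann multiplicity step, which in the non-aspherical case requires an argument nobody currently has in full generality. In short, your proposal reproduces the standard strategy, identifies the correct obstacles, and then asserts without justification that they can be overcome; but those obstacles are the reason the statement remains a conjecture, and the paper treats it as such.
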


Since the advent of Floer homology, there has been a huge progress in the two versions of the Arnold conjecture: (1) is now completely settled \cite{[FO99]}, \cite{[LT98]} and (2) has been confirmed for symplectically aspherical manifolds \cite{[Fl89]}, $\mathbb{C}P^n$ \cite{[For85]} \cite{[ForW85]} and negative monotone symplectic manifolds with $N_M\geq n$ \cite{[LO94]}.

It caught attention whether or not the Arnold conjecture is $C^0$-robust i.e. if Hamiltonian homeomorphisms satisfy similar properties. For closed surfaces, this question was answered in the positive by Matsumoto \cite{[Ma00]}. However, Buhovsky-Humili\`ere-Seyfaddini \cite{[BHS18a]} discovered that in higher dimension, this turns out not to be the case.

\begin{theo}$($\cite{[BHS18a]}$)$

Let $(M,\omega)$ be any closed symplectic manifold of dimension $\geq 4$. There exists a Hamiltonian homeomorphism $\phi \in \overline{\Ham}(M,\omega)$ such that
$$\# \Fix(\phi)=1.$$
\end{theo}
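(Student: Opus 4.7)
The plan is to exhibit $\phi$ as a $C^0$-limit $\lim_{n\to\infty} \phi_n$ of Hamiltonian diffeomorphisms whose fixed point sets $\Fix(\phi_n)$ concentrate at a single chosen point $p_0 \in M$. The dimension hypothesis $\dim M \geq 4$ enters crucially in the construction of the $\phi_n$: by contrast, Matsumoto's theorem shows that on a closed surface every Hamiltonian homeomorphism has at least as many fixed points as any smooth Hamiltonian diffeomorphism, so the statement cannot hold in dimension two.

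I would proceed as follows. Fix $p_0 \in M$ and a Darboux chart around it, identified with an open set in $(\R^{2n},\omega_{\mathrm{std}})$ with $n \geq 2$, and choose a nested family of balls $B_n = B(p_0, r_n)$ with $r_n \to 0$. For each $n$, construct a Hamiltonian $H_n$ on $M$ whose time-one flow $\phi_n = \phi^1_{H_n}$ has the properties: (a) $p_0 \in \Fix(\phi_n)$, arranged by requiring $H_n$ to vanish on a neighborhood of $p_0$; (b) every other fixed point of $\phi_n$ lies in $B_n$; and (c) the sequence $(\phi_n)$ is $C^0$-Cauchy, with $d_{C^0}(\phi_n,\phi_{n+1}) \leq 2^{-n}$. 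Granted such a sequence, the $C^0$-limit $\phi := \lim \phi_n$ defines an element of $\overline{\Ham}(M,\omega)$; any fixed point of $\phi$ other than $p_0$ would be an accumulation point of fixed points of the $\phi_n$ and hence would lie in $\bigcap_n \overline{B_n} = \{p_0\}$, while $p_0 \in \Fix(\phi)$ by (a) and continuity, giving $\#\Fix(\phi) = 1$.

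The core of the argument is the construction of $H_n$ with the concentration property (b). Since any Hamiltonian diffeomorphism of $M$ has at least $\sum_k \dim_{\K} H_k(M;\K) \geq 1$ fixed points by the smooth Arnold conjecture, $\phi_n$ necessarily carries several fixed points, and the requirement is that all of them lie in the small ball $B_n$. The dimension hypothesis $\dim M \geq 4$ supplies the needed symplectic flexibility: in $\R^{2n}$ with $n \geq 2$, the transverse symplectic directions give enough room to design flows that are simultaneously the identity near $p_0$, displace the complement of $B_n$ (eliminating any fixed points there), and remain $C^0$-small. A concrete recipe is to combine an autonomous Morse-type function whose critical set is concentrated near $p_0$ with a displacing Hamiltonian on the complement, then rescale so that (c) holds; the displacement step cannot be carried out on a surface, where the area constraint forces fixed points in thick annular regions.

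The principal obstacle is thus the simultaneous fulfillment of (a), (b), and (c), which requires a delicate patching based on the finite displacement energy of annular regions in $\R^{2n}$ for $n \geq 2$, together with uniform control of the $C^0$-norm of the time-one maps (a naive displacing Hamiltonian is compactly supported in $B_{n-1}$ but can have arbitrarily large $C^0$-norm, destroying the Cauchy property). Once these local ingredients are in place, the limiting argument outlined above is formal.
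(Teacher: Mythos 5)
Your limiting argument contains a logical gap that, as stated, is fatal. You claim that ``any fixed point of $\phi$ other than $p_0$ would be an accumulation point of fixed points of the $\phi_n$'' and hence lies in $\bigcap_n \overline{B_n}$. This does not follow: $C^0$-convergence of $\phi_n$ to $\phi$ gives only $d(\phi_n(x),x)\to 0$ for every $x\in\Fix(\phi)$, and that conclusion is perfectly compatible with $\Fix(\phi_n)$ staying far from $x$ for every $n$. New fixed points can appear in a $C^0$-limit out of nothing (e.g.\ $\phi_n=\id+\tfrac1n v$ for a nowhere-zero $v$ has $\Fix(\phi_n)=\emptyset$, yet the limit is $\id$). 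So concentrating $\Fix(\phi_n)$ inside $B_n$ is not enough; it does not prevent $\phi$ from acquiring fixed points elsewhere.

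What one actually needs, and what Buhovsky--Humili\`ere--Seyfaddini arrange, is a \emph{uniform-in-$n$} lower bound: for every compact $K\subset M\setminus\{p_0\}$ there should be $\varepsilon(K)>0$ with $d(\phi_n(x),x)\geq \varepsilon(K)$ for all $x\in K$ and all $n$ large. Only such a quantitative displacement survives the $C^0$-limit and forces $\Fix(\phi)=\{p_0\}$. In practice they build the target homeomorphism $\phi$ explicitly (roughly, a map that acts as a fixed-point-free ``twist'' on each shell of a nested sequence of balls shrinking to $p_0$, with the twist amplitude tending to $0$ but never vanishing) and then approximate it on the inside of the $n$-th ball by Hamiltonian diffeomorphisms. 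The genuine hard technical content---which your sketch labels the ``principal obstacle'' but does not address---is showing that the required twisting maps on thin shells can be realized by Hamiltonian diffeomorphisms with controlled $C^0$-norm; this uses a quantitative $h$-principle-type construction in $\R^{2n}$ for $n\geq 2$, and it is precisely here that $\dim M\geq 4$ enters (on a surface, area constraints obstruct it, consistent with Matsumoto's theorem). The heuristic that high dimension gives ``symplectic flexibility'' points in the right direction, but the argument as written neither states the uniform displacement condition nor constructs the maps realizing it, so the proof is incomplete at its two central points.
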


In their subsequent paper \cite{[BHS18b]}, Buhovsky-Humili\`ere-Seyfaddini have reformulated the Arnold conjecture in a way that is more suited to study the rigidity of Hamiltonian homeomorphisms when the ambient manifold is {\sa}. We will follow their idea to obtain similar results for symplectic manifolds that are not {\sa} by using the quantum product $\ast$ of $QH_\ast(M;\K)$ (for its definition, see Section \ref{QH}).

\begin{definition}
Let $(M^{2n},\omega)$ be a symplectic manifold. Let $a,b\in H_\ast(M;\K)\backslash \{0\}$. For a Hamiltonian $H$, define 
$$\sigma_{a, a \ast b}(H):=c(H,a)-c(H, a \ast b)$$
and for a {\hamd} $\phi$, define
$$\sigma_{a, a \ast b}(\phi):=\inf_{\phi_H=\phi} \sigma_{a, a\ast b}(H).$$

\end{definition}

For $(M,\omega)$ for which $\gamma$ is $C^0$-continuous (e.g. negative monotone symplectic manifolds and $(\mathbb{C}P^n,\omega_{FS})$), $\sigma_{a,a \ast b}$ turns out to be $C^0$-continuous and it extends continuously to $\overline{\Ham}(M,\omega)$: see Section \ref{section-arnold} for details.

\begin{theo}\label{theo-arnold}
Let $(M^{2n},\omega)$ be either a negative monotone symplectic manifold or $(\mathbb{C}P^n,\omega_{FS})$. For $\phi \in \overline{\Ham}(M,\omega)$, if there exist homology classes $a,b\in H_{\ast}(M;\K) \backslash \{0\}, b \neq [M]$ such that 
$$\sigma_{a,a\ast b}(\phi)=0,$$
then $\Fix(\phi)$ is homologically non-trivial, hence it is an infinite set.
\end{theo}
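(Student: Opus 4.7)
The plan is to adapt the strategy of Buhovsky--Humili\`ere--Seyfaddini from the symplectically aspherical setting (Section 6 of \cite{[BHS18-2]}) to our non-aspherical cases, the new input being the $C^0$-continuity of $\gamma$ established in Theorems \ref{neg mono} and \ref{C0-conti of gamma:CPn}. First I would promote the $C^0$-continuity of $\gamma$ to a $C^0$-continuity of each spectral invariant $c(\cdot, a)$, and consequently of $\sigma_{a, a\ast b}$, using the standard Lipschitz property of $c(\cdot, a)$ with respect to $\gamma$. In particular $\sigma_{a, a\ast b}$ extends continuously to $\overline{\Ham}(M,\omega)$; this is carried out in detail in Section \ref{section-arnold}.

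The heart of the proof is a quantitative Lusternik--Schnirelmann inequality, in the spirit of Howard, Oh, Entov--Polterovich, and Usher: if $H$ is non-degenerate and $\Fix(\phi_H)$ is contained in an open set $U \subset M$ which is \emph{homologically trivial} relative to the class $b \neq [M]$ (in the sense of \cite{[BHS18-2]}), then there exists a constant $\delta(U,b) > 0$, depending only on $U$ and $b$, such that
$$\sigma_{a, a\ast b}(H) \geq \delta(U, b).$$
Intuitively, if all fixed points of $\phi_H$ are trapped in a region that cannot detect the class $b$ cohomologically, then the spectral values $c(H,a)$ and $c(H, a\ast b)$ are forced apart by a definite amount.

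To conclude, suppose for contradiction that $\Fix(\phi)$ is homologically trivial and pick an open neighborhood $U \supset \Fix(\phi)$ witnessing this. Choose smooth non-degenerate Hamiltonians $H_k$ with $\phi_{H_k} \to \phi$ in the $C^0$-topology. Upper semicontinuity of the fixed point set in the $C^0$-topology then gives $\Fix(\phi_{H_k}) \subset U$ for all $k$ sufficiently large, and the Lusternik--Schnirelmann inequality yields $\sigma_{a, a\ast b}(H_k) \geq \delta(U, b)$. Passing to the limit using the $C^0$-continuity obtained in the first step contradicts $\sigma_{a, a\ast b}(\phi) = 0$. The infiniteness of $\Fix(\phi)$ then follows automatically: any finite subset of $M$ is homologically trivial, since its points can be separated by pairwise disjoint contractible balls.

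The main obstacle is the quantitative Lusternik--Schnirelmann inequality in the (negative) monotone and $\mathbb{C}P^n$ settings: in the symplectically aspherical case of \cite{[BHS18-2]} it essentially reduces to the classical Howard--Oh statement, while in our situation the presence of Novikov variables requires one to track action and index filtrations carefully in order to preserve a strict lower bound depending only on $U$. Fortunately, the filtration and degree machinery developed in the proofs of Theorems \ref{neg mono} and \ref{C0-conti of gamma:CPn} is well-suited to this task, so modulo this technical point the $C^0$-Arnold conjecture as formulated here is a direct consequence of the $C^0$-continuity of $\gamma$.
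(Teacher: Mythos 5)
Your high-level strategy matches the paper's: extend $\sigma_{a,a\ast b}$ continuously to $\overline{\Ham}$ using the $C^0$-continuity of $\gamma$, pick a neighborhood $U$ of $\Fix(\phi)$, approximate $\phi$ by smooth Hamiltonians $H_k$, and extract homological information about $U$ via a Lusternik--Schnirelmann-type subadditivity with an auxiliary function $f$ vanishing on $U$ and negative outside. Your proof-by-contradiction is the contrapositive of the paper's direct proof, so that framing difference is cosmetic.

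The genuine gap is the ``quantitative Lusternik--Schnirelmann inequality'' you posit, with a lower bound $\delta(U,b)>0$ depending \emph{only} on $U$ and $b$, valid for every non-degenerate $H$ with $\Fix(\phi_H)\subset U$. You flag this as the main obstacle and then treat it as a black box, but this uniform version is not what the paper proves or uses, and it is not obviously true. The difficulty is that the LS-type estimate $c(H,a\ast b)\leq c(H,a)+c(f,b)$ requires the equality $c(H\# f,a\ast b)=c(H,a\ast b)$, and whether this equality holds for a fixed $C^2$-small $f$ depends on the behavior of $\phi_H$ near $\partial U$ (new periodic orbits of $H\# f$ can appear there, shifting the spectrum); so the allowed size of $f$, and hence the value of $-c_{LS}(f,b)$, a priori depends on $H$, not just on $U$. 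The paper circumvents this by instead invoking the sequence-dependent Claim 5.3 of \cite{[BHS18-2]}: for a fixed $f$ and for the specific $C^0$-convergent sequence $H_k$, there exists $k_0$ (depending on the sequence) such that $c(H_k\# f, a) = c(H_k, a)$ for all $k\geq k_0$. That, combined with $\sigma_{h^{l_1},h^{l_2}}(H_k)\to \sigma_{h^{l_1},h^{l_2}}(\phi)=0$, forces $c_{LS}(f,h^{l_2-l_1})\geq 0$ and hence $c_{LS}(f,h^{l_2-l_1})=c_{LS}(f,[M])=0$, which is exactly the statement that $U$ is homologically non-trivial. So the actual mechanism is ``equality for $k$ large along a fixed converging sequence,'' not a uniform lower bound over all $H$ with fixed points in $U$. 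As written, your proposal outsources the entire analytic content of the theorem to an unproven (and stronger-than-needed) lemma.
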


\begin{remark}\label{ast cap}
\begin{enumerate}
\item Recall that, a subset $A\subset M$ is homologically non-trivial if for every open neighborhood $U$ of $A$ the map $i_\ast: H_j(U;\K) \to H_j(M;\K)$, induced by the inclusion $i:U\to M$, is non-trivial for some $j>0$. Homologically non-trivial sets are infinite sets.

\item In \cite{[How12]}, Howard considers the smooth version of this statement.

\end{enumerate}
\end{remark}

\subsection{Application 3: The displaced disks problem}

A topological group $G$ is a Rokhlin group if it possesses a dense conjugacy class i.e. for some $ \phi \in G,$ $Conj(\phi):=\{\psi^{-1}\phi\psi:\psi \in G\}$ is dense. B\'eguin-Crovisier-Le Roux formulated the following question so-called the "displaced disks problem", in order to study whether or not the group of area-preserving homeomorphisms on a sphere is a Rokhlin group.

\begin{question}$($B\'eguin-Crovisier-Le Roux$)$

Let 
$$\mathcal{G}_r:=\{\phi \in \overline{\Ham} (M,\omega): \phi(f(B_r))\cap f(B_r)=\emptyset \}$$
for any $r>0$ where $f:B_r\to (M,\omega)$ is a symplectic embedding. Does the $C^0$-closure of $\mathcal{G}_r$ contain the identity element for some $r>0$?
\end{question}

This original question which was for $(M,\omega)=(S^2,\omega_{area})$ was solved by Seyfaddini in \cite{[Sey13-2]} as a consequence of his earlier result on $C^0$-continuity of spectral norms for closed surfaces \cite{[Sey13-1]}. Other cases, also deduced by $C^0$-continuity of spectral norms, has also been considered: \cite{[BHS18b]} deals with symplectically aspherical manifolds and \cite{[Sh18]} treats $\mathbb{C}P^n$. Here we add the case of negative monotone symplectic manifolds.

\begin{theo}\label{ddp}
Let $(M,\omega)$ be a negative monotone symplectic manifold. For any $r>0$, there exists $\delta>0$ such that if $\phi \in  \overline{\Ham}(M,\omega)$ displaces a symplectically embedded ball of radius $r$, then $d_{C^0}(\id,\phi)>\delta.$
\end{theo}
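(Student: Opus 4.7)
The plan is to combine the energy-capacity inequality for the spectral norm with the $C^0$-continuity of $\gamma$ provided by Theorem \ref{neg mono}, following the template established by Seyfaddini \cite{[Sey13-2]}, Buhovsky-Humili\`ere-Seyfaddini \cite{[BHS18-2]} and Shelukhin \cite{[Sh18]}.

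First I would record the energy-capacity inequality for smooth Hamiltonian diffeomorphisms in the negative monotone setting: for any symplectic embedding $f\co B_r \hookrightarrow M$, there exists a positive constant $e(r)>0$, depending only on $r$, such that every $\psi \in \Ham(M,\omega)$ with $\psi(f(B_r)) \cap f(B_r) = \emptyset$ satisfies
$$\gamma(\psi) \geq e(r).$$
This follows from Usher's general bound $\gamma(\psi) \geq c_\gamma(f(B_r))$, where the spectral capacity $c_\gamma(U):=\sup\{\gamma(\phi_H) : \Supp(H) \subset U\}$ is strictly positive on any nonempty open set (by exhibiting an autonomous $H$ supported in $f(B_r)$ whose flow has a nontrivial orbit of small action, and checking the standard argument goes through in the negative monotone case).

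Next I would extend this inequality from $\Ham(M,\omega)$ to $\overline{\Ham}(M,\omega)$ by an approximation argument. Suppose $\phi \in \overline{\Ham}(M,\omega)$ displaces $f(B_r)$, and pick $\phi_n \in \Ham(M,\omega)$ with $d_{C^0}(\phi_n,\phi) \to 0$. For any $r'<r$, the closures $\overline{f(B_{r'})}$ and $\phi(\overline{f(B_{r'})})$ are disjoint compact sets — they lie respectively inside the disjoint open sets $f(B_r)$ and $\phi(f(B_r))$ — and therefore sit at a positive mutual distance. For $n$ large enough, $\phi_n$ still displaces $f(B_{r'})$, so $\gamma(\phi_n) \geq e(r')$. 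Since $\gamma$ extends continuously to $\overline{\Ham}(M,\omega)$ by Theorem \ref{neg mono}(2), passing to the limit yields $\gamma(\phi) \geq e(r')$ for every $r'<r$.

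Finally, invoking the $C^0$-continuity of $\gamma$ at the identity (where $\gamma(\id)=0$), there exists $\delta>0$ such that
$$d_{C^0}(\id,\phi) < \delta \implies \gamma(\phi) < e(r/2),$$
and the contrapositive of the previous step shows such a $\phi$ cannot displace $f(B_r)$, yielding the theorem. The main obstacle I foresee is the careful verification of the energy-capacity inequality in the negative monotone setting, and in particular the positivity of the spectral capacity of an embedded ball; once this technical input is secured, the rest is a formal approximation argument coupled with $C^0$-continuity.
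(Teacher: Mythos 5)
Your proposal is correct and follows essentially the same route as the paper: Usher's energy-capacity inequality for the spectral norm, extended to $\overline{\Ham}(M,\omega)$ via the $C^0$-continuity of $\gamma$ from Theorem \ref{neg mono}, then the contrapositive of continuity at the identity. The ``main obstacle'' you flag is in fact not one: the paper simply invokes Usher's sharp energy-capacity inequality \cite{[Ush10]} (Proposition \ref{usher}), which holds for arbitrary closed symplectic manifolds and gives the explicit lower bound $\gamma(\phi)\geq \pi r^2$, so no separate verification in the negative monotone setting or positivity argument for the spectral capacity is needed.
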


We obtain the following as a direct consequence. 

\begin{corol}\label{rokhlin}
Let $(M,\omega)$ be a negative monotone symplectic manifold. The group $\overline{\Ham}(M,\omega)$ seen as a topological group with respect to the $C^0$-topology is not a Rokhlin group. 
\end{corol}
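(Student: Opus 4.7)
\textbf{Proof proposal for Corollary \ref{rokhlin}.} The plan is to argue by contradiction, exploiting the fact that the spectral norm $\gamma$ is conjugation-invariant and, by Theorem \ref{neg mono} (2), extends to a $C^0$-continuous function on $\overline{\Ham}(M,\omega)$. Suppose for contradiction that there is $\phi_0 \in \overline{\Ham}(M,\omega)$ whose conjugacy class $\mathrm{Conj}(\phi_0) := \{\psi^{-1}\phi_0\psi : \psi \in \overline{\Ham}(M,\omega)\}$ is $C^0$-dense in $\overline{\Ham}(M,\omega)$.

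The first step is to show that the $C^0$-continuous extension of $\gamma$ is constant on $\mathrm{Conj}(\phi_0)$. On $\Ham(M,\omega)$ this is the standard conjugation-invariance $\gamma(\psi^{-1}\phi\psi) = \gamma(\phi)$, which follows from the invariance of the action spectrum and the PSS-isomorphism under the natural change of reference Hamiltonian induced by conjugation. To propagate this identity to $\overline{\Ham}(M,\omega)$, I would approximate an arbitrary pair $\phi_0, \psi \in \overline{\Ham}(M,\omega)$ by sequences $\phi_0^n, \psi^n \in \Ham(M,\omega)$ converging in $C^0$. Since $M$ is compact, composition and inversion are jointly $C^0$-continuous on the homeomorphism group of $M$, so $(\psi^n)^{-1}\phi_0^n\psi^n \to \psi^{-1}\phi_0\psi$ in $C^0$. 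Applying the $C^0$-continuous extension of $\gamma$ to both sides of $\gamma((\psi^n)^{-1}\phi_0^n\psi^n) = \gamma(\phi_0^n)$ then yields $\gamma(\psi^{-1}\phi_0\psi) = \gamma(\phi_0)$.

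Combining this constancy on $\mathrm{Conj}(\phi_0)$ with the density assumption and the $C^0$-continuity of $\gamma$ on $\overline{\Ham}(M,\omega)$ would force $\gamma$ to be identically equal to $\gamma(\phi_0)$ on all of $\overline{\Ham}(M,\omega)$. This contradicts the non-constancy of $\gamma$: indeed $\gamma(\mathrm{id}) = 0$, while any non-trivial $\phi \in \Ham(M,\omega) \subset \overline{\Ham}(M,\omega)$ satisfies $\gamma(\phi) > 0$ (for example, take $\phi$ displacing a small symplectically embedded ball, for which the energy-capacity inequality underlying Theorem \ref{ddp} provides a definite positive lower bound on $\gamma$). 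The only point that requires care is thus the compatibility of conjugation-invariance with the $C^0$-extension of $\gamma$; everything else is formal once Theorem \ref{neg mono} is in hand.
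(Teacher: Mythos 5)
Your proof is correct and gives a valid route, though it is not quite the one the paper has in mind. The paper presents the corollary as a ``direct consequence'' of Theorem~\ref{ddp}: the intended argument is that any non-identity $\phi_0$ must move some point and hence displace a small symplectically embedded ball of some radius $r$; every conjugate $\psi^{-1}\phi_0\psi$ with $\psi\in\Ham(M,\omega)$ then displaces the ball $\psi^{-1}(B)$ of the same capacity, so by Theorem~\ref{ddp} the whole conjugacy class stays at $C^0$-distance at least $\delta$ from the identity (with the extension to conjugation by $\overline{\Ham}$-elements handled by approximation, exactly as you do), and therefore cannot be dense. You instead make the underlying mechanism explicit by working directly with the conjugation-invariant function $\gamma$: since Theorem~\ref{neg mono}~(2) extends $\gamma$ continuously to $\overline{\Ham}(M,\omega)$, and since $\gamma(\psi^{-1}\phi\psi)=\gamma(\phi)$ passes to the closure by the approximation argument you describe (using Proposition~\ref{prop spec inv}~(7) on the smooth level), a dense conjugacy class forces $\gamma$ to be constant, which is impossible because $\gamma(\id)=0$ while the energy--capacity inequality (Proposition~\ref{usher}) produces elements with $\gamma>0$. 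The two arguments are close in spirit --- both boil down to the existence of a nonconstant, conjugation-invariant, $C^0$-continuous function on $\overline{\Ham}(M,\omega)$ --- but yours isolates this abstract principle cleanly and sidesteps the (minor) technicality in the paper's version of checking that conjugates of a ball-displacing element still displace a symplectically embedded ball, whereas the paper's version is more directly tied to the geometry of the displaced-disks problem. Both are acceptable; your write-up is a legitimate and slightly more conceptual proof.
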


\begin{remark}
The case of $(\mathbb{C}P^n,\omega_{FS})$ was considered by Shelukhin in \cite{[Sh18]}.
\end{remark}

\subsection{Acknowledgments}
I would like to thank Sobhan Seyfaddini for having generously shared some ideas and for continuous encouragements. I also would like to express my gratitude to Claude Viterbo for numerous useful discussions and to R\'emi Leclercq for his stimulating comments on the earlier version of the paper. I am also indebted to Egor Shelukhin for his very helpful suggestions and interesting comments on the earlier version of this paper. The referee has contributed a lot to the improvement of the exposition. I owe a lot to her/him. Most part of this work was carried out at the MSRI, Berkeley while I was in residence during the Fall 2018 semester. I greatly benefited from the lively research atmosphere of the MSRI and would like to thank the institute for providing an excellent research environment.

\section{Preliminaries}
In this section, we briefly review the notions and basic propositions needed later in the proof. For further details, we refer to \cite{[MS04]}.

Let $(M,\omega)$ be a symplectic manifold. A Hamiltonian $H$ on $M$ is a smooth time dependent function $H:\mathbb{R}/\mathbb{Z}\times M\to \mathbb{R}$. We define its Hamiltonian vector field $X_{H_t}$ by $-dH_t=\omega(X_{H_t},\cdot).$ The Hamiltonian flow of $H$, denoted by $\phi_H ^t$, is by definition the flow of $X_H$. A Hamiltonian diffeomorphism is a diffeomorphism which arises as the time-one map of a Hamiltonian flow. The set of all Hamiltonian diffeomorphisms is denoted by $\Ham(M,\omega)$.

 Denote the set of smooth contractible loops in $M$ by $\mathcal{L}M$ and consider its universal cover. Two elements in the universal cover, say $[z_1,w_1]$ and $[z_2,w_2]$, are called equivalent if their boundary sum $w_1\# \overline{w_2}$ i.e. the sphere obtained by gluing $w_1$ and $w_2$ along their common boundary with the orientation on $w_2$ reversed, satisfies 
 $$\omega (w_1\# \overline{w_2})=0,\ c_1(w_1\# \overline{w_2})=0.$$
 We denote by $\widetilde{\mathcal{L}_0M}$ the space of equivalence classes. For a Hamiltonian $H$, define the action functional $\mathcal{A}_H:\widetilde{\mathcal{L}M}\to \mathbb{R}$ by
$$\mathcal{A}_H([z,w]):=\int_0 ^1 H(t,z(t))dt-\int_{D^2} w^{\ast}\omega$$
where $w:D^2\to M$ is a capping of $z:\mathbb{R}/\mathbb{Z}\to M$. Note that in general, the action functional depends on the capping and not only on the loop. Critical points of this functional are precisely the capped 1-periodic Hamiltonian orbits of $H$ which will be denoted by $\widetilde{\mathcal{P}}(H)$. The set of critical values of $\mathcal{A}_H$ is called the action spectrum and is denoted by $\Spec(H)$:
$$\Spec(H):=\{\mathcal{A}_H(\widetilde{z}):\widetilde{z} \in \widetilde{\mathcal{P}}(H)\}.$$

We briefly explain some notions of indices used later to construct Floer homology. The Maslov index 
$$\mu: \pi_1(Sp(2n)) \to \mathbb{Z}$$
maps a loop of symplectic matrices to an integer. Given a loop of Hamiltonian diffeomorphims $(\psi^t)_{t \in [0,1]} \in \pi_1(\Ham(M,\omega))$ and a point $x \in M$, denote the capped orbit of $t \mapsto \psi ^t(x)$ with a capping $w$ by $[\psi ^t(x),w]$. We define its Maslov index $\mu([\psi ^t(x),w])$ via the trivialization of $w^\ast TM$ and the loop of symplectic matrices $d\psi ^t(x):T_xM\to T_{\psi^t (x)}M.$ The definition of Maslov indices cannot be directly applied to periodic orbits of a Hamiltonian $H$ since given a periodic orbit $[\phi_H ^t (x),w],$ $d\phi_H ^t(x):T_xM\to T_{\phi_H ^t (x)}M$ might not define a loop. To overcome this difficulty, Conley-Zehnder modified the definition of the Maslov index and introduced the Conley-Zehnder index 
$$\mu_{CZ}: \{A:[0,1]\to Sp(2n)| \det (A(1)- \id) \neq 0 \}\to \mathbb{Z}$$
which maps paths of symplectic matrices to integers. Thus, as in the case of Maslov indices, we define the Conley-Zehnder index of a non-degenerate periodic orbit of a Hamiltonian $H$ $[\phi_H ^t (x),w]$ (i.e. $d\phi_H(x) : T_x M\to T_x M $ has no eigenvalue which equals to $1$), denoted by $\mu_{CZ}([\phi_H ^t (x),w]),$ via the trivialization of $w^\ast TM$ and the path of symplectic matrices $d\phi_H ^t(x):T_xM\to T_{\phi_H ^t (x)}M$.

The following elementary properties are often used to calculate the action. 

\begin{prop}
Let $(M,\omega)$ be a symplectic manifold. Assume the Hamiltonian paths generated by $H$ and $G$ are homotopic rel. end points

i.e. there exists $ W:[0,1]\times [0,1]\to \Ham(M,\omega)$ such that 
\begin{enumerate}
\item $W(0,t)=\phi_H ^t,\ W (1,t)=\phi_G ^t.$
\item $W(s,0)=\id,\ W (s,1)=\phi_H=\phi_G.$ 
\end{enumerate}
Let $x\in \Fix(\phi_H)=\Fix(\phi_G)$ and $w$ be a capping of the orbit $\phi_H ^t(x)$. Then the action of the capped orbit $[\phi_G ^t(x),w']$ where $w':=w\# W $ ($W$ glued to $x$ along $\phi_H ^t(x)$) coincides with the action of $[\phi_H ^t(x),w]$:
$$\mathcal{A}_H([\phi_H ^t(x),w])=\mathcal{A}_G([\phi_G ^t(x),w'])$$
\end{prop}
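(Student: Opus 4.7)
The plan is to lift the homotopy $W$ to a smooth two-parameter family of Hamiltonians and track how the action of the capped orbit varies as the parameter $s$ moves from $0$ to $1$. First I would choose a smooth family $\{H_s\}_{s \in [0,1]}$ of normalized (mean-zero) Hamiltonians with $H_0 = H$, $H_1 = G$, and time-$t$ flow $W(s,t)$; the normalization determines $H_s$ uniquely. In parallel, let $K(s,t,\cdot)$ be the mean-zero Hamiltonian generating the curve $s \mapsto W(s,t)$ for each fixed $t$. Set $u(s,t) := W(s,t)(x)$ and let $w_s$ be the capping of the loop $t \mapsto u(s,t)$ obtained by gluing $w$ with the strip $u|_{[0,s] \times [0,1]}$. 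The endpoint hypotheses force $u(s,0) = u(s,1) = x$, so each $z_s := u(s,\cdot)$ is a genuine $1$-periodic orbit of $H_s$ based at $x$, and $f(s) := \mathcal{A}_{H_s}([z_s, w_s])$ satisfies $f(0) = \mathcal{A}_H([\phi_H^t(x), w])$ and $f(1) = \mathcal{A}_G([\phi_G^t(x), w'])$.

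Next I would compute $f'(s)$ by direct differentiation. Three contributions appear: $\int_0^1 \partial_s H_s\, dt$, $\int_0^1 dH_s(\partial_s u)\, dt$, and $-\int_0^1 \omega(\partial_s u, \partial_t u)\, dt$. Using $\partial_t u = X_{H_s}(u)$ and the Hamiltonian identity $\omega(\cdot, X_{H_s}) = dH_s$, the last two terms cancel, leaving
$$f'(s) \;=\; \int_0^1 \partial_s H_s(t, z_s(t))\, dt.$$
To show this integral vanishes I would invoke the compatibility coming from $\partial_s \partial_t W = \partial_t \partial_s W$. Combined with $[X_F, X_G] = X_{\{F, G\}}$, this shows that the function $\partial_s H_s - \partial_t K_t - \{H_s, K_t\}$ has vanishing Hamiltonian vector field; the mean-zero normalization then forces it to be identically zero, yielding the pointwise identity $\partial_s H_s = \partial_t K_t + \{H_s, K_t\}$. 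Evaluated at $z_s(t)$ and using $\{H_s, K_t\}(z_s) = dK_t(\dot z_s)$, the right-hand side is exactly $\tfrac{d}{dt}\bigl[K(s, t, z_s(t))\bigr]$. Integrating in $t$ gives $f'(s) = K(s, 1, x) - K(s, 0, x)$, and since $W(s, 0) = \id$ and $W(s, 1) = \phi_H$ are both constant in $s$, their $s$-derivatives vanish, so $X_{K(s, 0, \cdot)} = X_{K(s, 1, \cdot)} = 0$; mean-zero normalization then forces these Hamiltonians to be identically zero. Hence $f' \equiv 0$ and $f(0) = f(1)$, as required.

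The step requiring the most care is the derivation of the mixed-partials identity and the appeal to mean-zero normalization to eliminate the additive constants, since the proposition as stated says nothing about normalization. Once the convention is fixed up front, the rest of the argument is a short calculus-of-variations computation, and the key geometric input reduces to the fact that the boundary data $W(s,0)$ and $W(s,1)$ are independent of $s$.
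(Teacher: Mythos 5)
The paper states this proposition in its preliminaries section without proof, so there is no paper argument to compare against. Your proof is correct and is the standard one: differentiate $f(s) = \mathcal{A}_{H_s}([z_s,w_s])$ along the homotopy, use $dH_s = \omega(\cdot, X_{H_s})$ and $\partial_t u = X_{H_s}(u)$ to cancel $dH_s(\partial_s u)$ against $\omega(\partial_s u,\partial_t u)$, reducing to $f'(s)=\int_0^1\partial_s H_s(t,z_s(t))\,dt$; then the compatibility identity $\partial_s H_s=\partial_t K_t+\{H_s,K_t\}$ coming from $\partial_s\partial_t W=\partial_t\partial_s W$ converts the integrand into $\tfrac{d}{dt}\bigl[K(s,t,z_s(t))\bigr]$, and the boundary terms $K(s,1,x)-K(s,0,x)$ vanish because $W(s,0)$ and $W(s,1)$ are $s$-independent. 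You are also right to flag the normalization issue, which is genuine rather than cosmetic: as literally stated the proposition is false without a convention (take $H\equiv 0$ and $G\equiv c$ generating the same constant path; $\mathcal{A}_G-\mathcal{A}_H=c$), and the mean-zero normalization, implicit throughout this literature and in the paper, is exactly what fixes $H_s$ and $K$ and kills the additive constant in the compatibility identity. One small point you leave implicit but should record: to conclude that $\partial_s H_s-\partial_t K_t-\{H_s,K_t\}$ is identically zero (rather than an unknown function of $(s,t)$), you need $\{H_s,K_t\}$ itself to be mean-zero; this holds because Poisson brackets integrate to zero against $\omega^n$ on a closed manifold, so the conclusion stands.
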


\begin{prop}
Let $(M,\omega)$ be a symplectic manifold. 
\begin{enumerate}
\item For any Hamiltonian $H\in C^{\infty}(\mathbb{R}/\mathbb{Z}\times M,\mathbb{R})$,
$$\overline{H}(t,x):=-H(t,\phi_H ^t(x))$$ generates the Hamiltonian flow 
$$t \mapsto (\phi_H ^t) ^{-1}$$
and has the time-1 map $\phi_H ^{-1}.$
\item For any Hamiltonian $H\in C^{\infty}(\mathbb{R}/\mathbb{Z}\times M,\mathbb{R})$,
$$\tilde{H}(t,x):=-H(-t,x)$$ 
generates the Hamiltonian flow 
$$t \mapsto \phi_H ^{-t}$$
and has the time-1 map $\phi_H ^{-1}.$
\item Hamiltonian paths generated by $\overline{H}$ and $\tilde{H}$ are homotopic rel. end points.
\end{enumerate}
\end{prop}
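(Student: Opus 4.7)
The plan is to handle parts (1) and (2) by direct ODE computations and then exhibit an explicit homotopy for (3).

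For (1), set $\psi_t := (\phi_H^t)^{-1}$. Differentiating $\phi_H^t \circ \psi_t = \id$ in $t$ and using $\tfrac{d}{dt}\phi_H^t = X_{H_t} \circ \phi_H^t$ yields
\[ \dot\psi_t(y) = -\bigl(d\phi_H^t|_{\psi_t(y)}\bigr)^{-1} X_{H_t}(y). \]
Substituting $x = \psi_t(y)$, the vector field $V_t$ generating the isotopy $\psi_t$ is $V_t(x) = -(d\phi_H^t|_x)^{-1} X_{H_t}(\phi_H^t(x))$. To see $V_t = X_{\overline H_t}$, apply the chain rule to $\overline H_t(x) = -H_t(\phi_H^t(x))$ and use $-dH_t = \omega(X_{H_t},\cdot)$ together with the symplecticity of $\phi_H^t$; the relation $\omega(V_t,\cdot) = -d\overline H_t$ drops out, and the time-1 map is $\psi_1 = (\phi_H^1)^{-1}$ by construction.

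Part (2) is essentially the chain rule. Extending $H$ periodically in $t$, the curve $\chi_t := \phi_H^{-t}$ satisfies $\dot\chi_t = -X_{H_{-t}}(\chi_t) = X_{\tilde H_t}(\chi_t)$, because $\tilde H_t = -H_{-t}$ implies $X_{\tilde H_t} = -X_{H_{-t}}$. The identification $\phi_H^{-1} = (\phi_H^1)^{-1}$ for the time-1 map follows from the periodicity identity $\phi_H^{t+1} = \phi_H^t \circ \phi_H^1$ (itself a consequence of $H_{t+1}=H_t$ and uniqueness of ODE solutions), evaluated at $t=-1$.

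For (3), I would exhibit the explicit two-parameter family
\[ \Psi^s(t) := \phi_H^{s(1-t)} \circ \bigl(\phi_H^{\,s + (1-s)t}\bigr)^{-1}, \qquad (s,t)\in [0,1]\times [0,1]. \]
Direct verification gives $\Psi^s(0) = \id$ and $\Psi^s(1) = (\phi_H^1)^{-1}$ for every $s$; moreover $\Psi^0(t) = (\phi_H^t)^{-1} = \phi_{\overline H}^t$ recovers the flow from (1), and $\Psi^1(t) = \phi_H^{1-t}\circ (\phi_H^1)^{-1}$ coincides with the flow of $\tilde H$ obtained in (2) (using $\phi_H^{-t} = \phi_H^{1-t}\circ(\phi_H^1)^{-1}$ from periodicity). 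Being a smooth $(s,t)$-family of compositions of Hamiltonian diffeomorphisms, each $\Psi^s$ is a Hamiltonian path from $\id$ to $(\phi_H^1)^{-1}$, so $\Psi$ is the desired homotopy rel.\ endpoints.

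The only place that is not completely mechanical is guessing the homotopy formula in (3); it is motivated by the observation that both the $\overline H$- and $\tilde H$-flows are reparameterizations of an inversion of the path $t\mapsto \phi_H^t$. The point to keep in mind is that for time-dependent $H$ one has $\phi_H^{-t} \neq (\phi_H^t)^{-1}$ as paths, so (3) is a genuine homotopy statement rather than an equality of paths.
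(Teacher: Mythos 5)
Your proof is correct. The paper itself does not supply an argument for this proposition (it states it as a standard fact and moves on), so there is no paper proof to compare against; your computations in (1) and (2) are the standard ODE/chain-rule checks, and the explicit two-parameter family $\Psi^s(t)=\phi_H^{s(1-t)}\circ\bigl(\phi_H^{s+(1-s)t}\bigr)^{-1}$ in (3) is exactly the usual homotopy in a topological group between the pointwise inverse of a path and its reversal composed with the inverse endpoint; one verifies directly that it has the right boundary values and that each $\Psi^s$ is a Hamiltonian path since compositions of Hamiltonian isotopies remain Hamiltonian.
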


\begin{prop}\label{useful prop}
Let $(M,\omega)$ be a symplectic manifold. 
\begin{enumerate}
\item For any Hamiltonians $H,G\in C^{\infty}(\mathbb{R}/\mathbb{Z}\times M,\mathbb{R})$,
$$H\#G(t,x):=H(t,x)+G(t,(\phi_H ^t)^{-1}(x))$$ 
generates the Hamiltonian flow 
$$t \mapsto \phi_H ^{t} \circ \phi_G ^t$$
and has the time-1 map $\phi_H\circ \phi_G.$

\item For any Hamiltonians $H,G\in C^{\infty}(\mathbb{R}/\mathbb{Z}\times M,\mathbb{R})$,
$$H\wedge G(t,x):=\begin{cases}
2 \chi' (2t) G(\chi(2t) ,x)\ (0\leq t\leq 1/2)\\
2 \chi' (2t-1) H(\chi (2t-1),\phi_G(x))\ (1/2\leq t\leq 1)
\end{cases}$$ 
generates the Hamiltonian flow
$$t \mapsto \begin{cases}
\phi_G ^{ \chi (2t)}\ (0\leq t\leq 1/2)\\
\phi_H ^{\chi(2t-1)}\circ \phi_G \ (1/2\leq t\leq 1)
\end{cases}$$
and has the time-1 map $\phi_H\circ \phi_G.$ Here,  
$$\chi : [0,1] \to [0,1],\ t \mapsto \chi (t)$$
 is a smooth map that is identically $0$ around $t=0$ and $1$ around $t=1$ and plays the role of gluing two Hamiltonians smoothly.

\item Hamiltonian paths generated by $H\#G$ and $H\wedge G$ are homotopic rel. end points (for any choice of $\chi : [0,1] \to [0,1],\ t \mapsto \chi (t)$).
\end{enumerate}
\end{prop}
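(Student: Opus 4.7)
The plan is to verify each of the three items by a direct computation in Hamiltonian dynamics, relying on two inputs: the defining relation $-dH_t = \omega(X_{H_t},\cdot)$ and the naturality property that for any symplectomorphism $\psi$ one has $\psi_\ast X_F = X_{F\circ \psi^{-1}}$.

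For (1), I would differentiate $\phi_H^t\circ \phi_G^t$ in $t$ using the chain rule at a point $x\in M$, which produces $X_H(t,\cdot)$ at $\phi_H^t\phi_G^t(x)$ plus the pushforward by $d\phi_H^t$ of $X_G(t,\phi_G^t(x))$. The naturality property identifies this pushforward with the Hamiltonian vector field of $G_t\circ (\phi_H^t)^{-1}$ at the same point, and the sum is exactly $X_{H\#G}$ evaluated along the curve. Part (2) is even more immediate: on each half-interval $H\wedge G$ is a time-rescaling of $G$, or of $H$ after composing with the fixed symplectomorphism $\phi_G$ which merely conjugates by a constant map outside $t$, so the chain rule and naturality yield the claimed flow, and continuity at $t=1/2$ is checked by direct substitution.

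Part (3) is the conceptual heart. My plan is to write down an explicit two-parameter family of isotopies interpolating between the two paths. Choose smooth families of nondecreasing reparametrizations $\alpha_s,\beta_s\co [0,1]\to [0,1]$ with fixed endpoints $\alpha_s(0)=\beta_s(0)=0$, $\alpha_s(1)=\beta_s(1)=1$ for every $s\in [0,1]$, starting from $\alpha_0(t)=\beta_0(t)=t$ and ending at smoothings of $\alpha_1(t)=\max(2t-1,0)$ and $\beta_1(t)=\min(2t,1)$. Then set
$$W(s,t) := \phi_H^{\alpha_s(t)}\circ \phi_G^{\beta_s(t)}.$$
Each slice $W(s,\cdot)$ is a Hamiltonian isotopy from $\id$ to $\phi_H\circ\phi_G$, with generating Hamiltonian obtained from $H$ and $G$ by the product formula of (1) rescaled in time by $\alpha_s'$ and $\beta_s'$, and the slices at $s=0$ and $s=1$ recover the isotopies of (1) and (2) respectively.

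The main obstacle I anticipate is not a deep one but a hygiene issue: the piecewise definition in (2) is only continuous at $t=1/2$, so the interpolation at $s=1$ must be arranged to yield a \emph{smooth} Hamiltonian isotopy. This forces a careful choice of $\alpha_s,\beta_s$, for instance a standard smoothing of the corners of $\alpha_1,\beta_1$ near $t=1/2$, after which one verifies that the time-one map is unchanged and the isotopy remains rel endpoints. Once this technicality is taken care of, all three statements follow.
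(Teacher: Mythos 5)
The paper states this proposition without proof, grouping it among the ``elementary properties'' used to calculate actions, so there is no paper argument to compare against. Your approach---direct differentiation plus naturality for (1) and (2), and a two-parameter family $W(s,t)=\phi_H^{\alpha_s(t)}\circ\phi_G^{\beta_s(t)}$ of reparametrized compositions for (3)---is the standard one. Your treatment of (1) is correct, and for (3) the interpolation with fixed-endpoint reparametrizations $\alpha_s,\beta_s$ is exactly right: $W(s,0)=\id$ and $W(s,1)=\phi_H\circ\phi_G$ for all $s$, the $s=0$ slice is the $H\#G$ path, the $s=1$ slice is the $H\wedge G$ path, and your remark about smoothing the corner at $t=1/2$ is the right technical caveat.

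Part (2) is where your sketch goes wrong. You claim that the $\phi_G$ appearing inside the argument of $H$ on $[1/2,1]$ ``merely conjugates by a constant map'' and is therefore harmless. It is not: by the very naturality you invoke in (1), $X_{H_t\circ\phi_G}=(\phi_G^{-1})_\ast X_{H_t}$, so precomposing $H_t$ with $\phi_G$ genuinely changes the Hamiltonian vector field, hence the flow; and even if it were a conjugation, conjugating the flow of $H$ by $\phi_G$ would yield $\phi_G^{-1}\circ\phi_H^{2t-1}\circ\phi_G$, not $\phi_H^{2t-1}\circ\phi_G$. Differentiating the displayed flow directly gives, for $t\in[0,1/2]$, $\tfrac{d}{dt}\phi_G^{2t}=2\,X_{G_{2t}}\circ\phi_G^{2t}$, and for $t\in[1/2,1]$, $\tfrac{d}{dt}\bigl(\phi_H^{2t-1}\circ\phi_G\bigr)=2\,X_{H_{2t-1}}\circ\bigl(\phi_H^{2t-1}\circ\phi_G\bigr)$; so the correct generator is $2G(2t,x)$ on the first half and $2H(2t-1,x)$ on the second half---a factor of $2$ is required and no $\phi_G$ should appear in the argument. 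The formula for $H\wedge G$ as printed contains typos, and your proof as sketched does not detect them; a careful verification of (2) would have. This does not affect (1) or (3), since (3) only uses the fact that the $H\wedge G$ path is the time-reparametrized concatenation of $\phi_G^t$ and $\phi_H^t\circ\phi_G$, regardless of the precise formula for the generating Hamiltonian.
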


The following two propositions will be used in Section \ref{section neg mono}. Proofs will be omitted as they follow from elementary arguments.

\begin{prop}\label{homotopy invariance}
Let $(M,\omega)$ be a symplectic manifold, $U$ a simply connected non-empty open set and $H$ a Hamiltonian such that $\phi_H(p)=p$ for all $p\in U$. Take any $x_0\in U$ and a capping $w_0:D^2\to M$ of the orbit $\phi_H ^t(x_0)$ and fix them.

For any $x\in U$, define a capping $w_x:D^2\to M$ of the orbit $\phi_H ^t(x)$ by $$w_x(s e^{2\pi i t}):=\phi_H ^t (c(s))\# w_0$$ where $c:[0,1]\to M$ is a smooth path from $x_0$ to $x$ and $\phi_H ^t (c(s))\# w_0$ denotes the gluing of $\phi_H ^t (c(s))$ and $w_0$ along $\phi_H ^t(x_0)$. Then we have the following:
\begin{enumerate}
\item $\mathcal{A}_{H}([\phi_H ^t(x),w_x])=\mathcal{A}_{H}([\phi_H ^t(x_0),w_0]).$
\item $\mu([\phi_H ^t(x),w_x])=\mu([\phi_H ^t(x_0),w_0]).$
\end{enumerate}
\end{prop}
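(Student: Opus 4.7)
The plan is to use the cylinder $C\colon [0,1]^2 \to M$ defined by $C(s,t) := \phi_H^t(c(s))$ (with $c$ chosen inside $U$, by path-connectedness of $U$) as the geometric bridge between the two cappings. Since $c(s) \in U$ and $\phi_H|_U = \id$, each vertical loop $t \mapsto \phi_H^t(c(s))$ is $1$-periodic, so $C$ is an honest cylinder interpolating between $\phi_H^t(x_0)$ and $\phi_H^t(x)$, and by construction $w_x = w_0 \# C$.

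For (1), I would split $\int_{D^2} w_x^\ast \omega = \int_{D^2} w_0^\ast \omega + \int_C \omega$ and compute the cylinder integral pointwise. Using $\partial_s C = d\phi_H^t(c(s))\,c'(s)$, $\partial_t C = X_{H_t}(\phi_H^t(c(s)))$, and the defining identity $-dH_t = \omega(X_{H_t},\cdot)$, one obtains
\[ \omega(\partial_s C,\partial_t C) \;=\; dH_t\bigl(d\phi_H^t(c(s))\,c'(s)\bigr) \;=\; \frac{\partial}{\partial s}\bigl[H_t(\phi_H^t(c(s)))\bigr]. \]
Integrating first in $s$, then in $t$, gives $\int_C \omega = \int_0^1 H_t(\phi_H^t(x))\,dt - \int_0^1 H_t(\phi_H^t(x_0))\,dt$. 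Substituting into the definition of $\mathcal{A}_H([\phi_H^t(x),w_x])$, the two $\int H(\cdot,\phi_H^t(x))$ terms cancel and one is left with $\mathcal{A}_H([\phi_H^t(x_0),w_0])$.

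For (2), the same cylinder $C$ provides a one-parameter family of loops of symplectic matrices. Fix a symplectic trivialization $\Phi_0$ of $w_0^\ast TM$; on the boundary it yields a periodic frame $F_0(t)$ along $T_{\phi_H^t(x_0)} M$. Any symplectic vector bundle over $[0,1]\times S^1$ is classified by its restriction to one boundary circle, so $F_0$ extends to a continuous family of periodic frames $F_s(t)$ along $T_{C(s,t)}M$, giving a trivialization of $w_x^\ast TM$ that agrees with $\Phi_0$ on the $w_0$-part. Relative to these frames, the path $t\mapsto d\phi_H^t(c(s))$ becomes a symplectic path $A_s(\cdot)$, and the hypothesis $\phi_H|_U = \id$ forces $d\phi_H(c(s)) = \id$, so each $A_s$ is an actual loop based at the identity. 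The family $\{A_s\}_{s\in [0,1]}$ is then a continuous homotopy from $A_0$, which computes $\mu([\phi_H^t(x_0),w_0])$, to $A_1$, which computes $\mu([\phi_H^t(x),w_x])$; homotopy invariance of the Maslov index $\mu\colon \pi_1(Sp(2n)) \to \mathbb{Z}$ gives the equality.

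The computation in (1) is mechanical. The technical heart of (2) is the periodic extension of the frame $F_0$: each $A_s$ must genuinely lie in the loop space $\Omega Sp(2n)$ for the homotopy argument to conclude, which is possible precisely because $d\phi_H \equiv \id$ throughout $U$, not merely at isolated fixed points. Simple connectedness of $U$ enters only to ensure that any two choices of path $c$ from $x_0$ to $x$ are homotopic in $U$, hence produce homotopic cylinders and the same action and Maslov values, so that $w_x$ is well-defined up to homotopy independently of the auxiliary choice.
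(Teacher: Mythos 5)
Your argument is correct; both computations are sound. The paper explicitly omits the proof of Proposition~\ref{homotopy invariance} (``Proofs will be omitted as they follow from elementary arguments''), and the route you take---the Stokes-type computation of $\int_C \omega$ via $\omega(\partial_s C,\partial_t C)=\partial_s\bigl[H_t(\phi_H^t(c(s)))\bigr]$ for part (1), and the homotopy $\{A_s\}$ of \emph{loops} (using that $\phi_H|_U=\id$ forces $d\phi_H(c(s))=\id$ on all of $T_{c(s)}M$, since $U$ is open) together with homotopy invariance of the Maslov index for part (2)---is precisely the elementary argument the authors are pointing to. Your closing remarks about why each hypothesis is needed (openness of $U$ for $d\phi_H\equiv\id$, simple connectedness for well-definedness of $w_x$ up to homotopy) correctly identify where each assumption enters.
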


\begin{prop}\label{opposite direction}
Let $(M,\omega)$ be a symplectic manifold, $H$ a Hamiltonian and $[\phi_H ^t(x),w]$ any capped 1-periodic orbit of $H$. Then 
\begin{enumerate}
\item $\overline{w}:D^2\to M,\ \overline{w}(se^{2\pi i t}):=w(se^{2\pi i (-t)})$ is a capping of the orbit $\phi_H ^{-t}(x)$
\item $\mu([\phi_H ^t(x),w])=-\mu([\phi_H ^{-t}(x),\overline{w}])$
\item $\mathcal{A}_{H}([\phi_H ^t(x),w])=-\mathcal{A}_{\tilde{H}}([\phi_{\tilde{H}} ^t(x),\overline{w}])$ where $\tilde{H}(t,x)=-H(-t,x)$.
\end{enumerate}
\end{prop}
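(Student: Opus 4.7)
The plan is to verify each of the three claims by a direct unwinding of the definitions, exploiting a single key geometric observation: $\overline{w}$ is $w$ precomposed with the complex conjugation $z \mapsto \overline{z}$ on $D^2$, which is an orientation-reversing diffeomorphism, and correspondingly the orbit $t \mapsto \phi_{\tilde H}^t(x) = \phi_H^{-t}(x)$ is the original loop traversed backwards.

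Part (1) is immediate from the boundary restriction: $\overline{w}|_{\partial D^2}$ sends $e^{2\pi i t}$ to $w(e^{-2\pi i t}) = \phi_H^{-t}(x) = \phi_{\tilde H}^t(x)$, so $\overline{w}$ is indeed a capping of the orbit $t \mapsto \phi_{\tilde H}^t(x)$. For part (2), given a symplectic trivialization $\xi$ of $w^*TM$, precomposing with $z \mapsto \overline{z}$ produces a symplectic trivialization of $\overline{w}^*TM$ whose restriction to $\partial D^2$ at parameter $t$ coincides with $\xi$ at parameter $-t$. In this trivialization, the linearized flow $d\phi_{\tilde H}^t(x) = d\phi_H^{-t}(x)$ is represented by the path of symplectic matrices obtained from the $(H,w)$-path via the standard time-reversal operation; the Conley--Zehnder/Maslov index reverses sign under this operation, which is a well-known consequence of its axiomatic characterization (homotopy invariance together with the concatenation property, noting that the concatenation of a path with its time-reversal is null-homotopic rel.\ endpoints).

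For part (3), a direct computation suffices. Using $\tilde H(t, y) = -H(-t, y)$ and $\phi_{\tilde H}^t(x) = \phi_H^{-t}(x)$, the substitution $s = -t$ together with the $1$-periodicity of $H$ in the time variable and of the orbit $t \mapsto \phi_H^t(x)$ gives
$$\int_0^1 \tilde H\bigl(t, \phi_{\tilde H}^t(x)\bigr)\, dt \;=\; -\int_0^1 H\bigl(t, \phi_H^t(x)\bigr)\, dt.$$
On the other hand, since $\overline{w}$ factors as $w$ composed with an orientation-reversing self-map of $D^2$, we have $\int_{D^2} \overline{w}^*\omega = -\int_{D^2} w^*\omega$. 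Combining these two identities with the definition $\mathcal{A}_H([z,w]) = \int_0^1 H(t,z(t))\,dt - \int_{D^2} w^*\omega$ yields $\mathcal{A}_{\tilde H}([\phi_{\tilde H}^t(x), \overline{w}]) = -\mathcal{A}_H([\phi_H^t(x), w])$, as claimed.

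None of the steps are conceptually difficult; the main obstacle is purely notational bookkeeping, most visibly in part (2), where one must keep track of trivializations consistently in order to verify that the $\overline{w}$-path is genuinely the time-reversal of the $w$-path and then invoke the correct sign-reversal property of the index. Parts (1) and (3) reduce to elementary calculations once the orientation-reversal of $z \mapsto \overline{z}$ and the $1$-periodicity of $H$ are accounted for.
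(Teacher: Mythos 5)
Your proof is correct, and it supplies exactly the argument the paper declines to spell out: the paper introduces Proposition~\ref{opposite direction} with the remark ``Proofs will be omitted as they follow from elementary arguments,'' so there is no argument in the paper to compare against. Your verifications of parts~(1) and~(3) -- orientation reversal of $z \mapsto \bar z$ on $D^2$ giving $\int_{D^2}\overline{w}^{\,*}\omega = -\int_{D^2}w^*\omega$, and the substitution $s=-t$ together with $1$-periodicity of $H$ and of the orbit -- are exactly the computations one would write down.

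One small caveat on part~(2), worth being aware of even though the conclusion is correct. In the paper's notation $\mu$ is the Maslov index of a \emph{loop} of symplectic matrices (it is applied in the proof of Proposition~\ref{prop1} under the hypothesis $\phi_H|_U = \id_U$, so that $d\phi_H^t(x)$ is genuinely a loop), whereas your argument speaks more generally of the Conley--Zehnder index of a path. For the loop case, ``concatenation with the time-reversal is null-homotopic'' combined with the homomorphism property of $\mu\colon \pi_1(Sp(2n)) \to \Z$ gives the sign flip cleanly, as you say. If one instead wanted the statement for $\mu_{CZ}$ of a non-closed path, the reasoning needs a touch more care: the path $t \mapsto \Phi(-t)$ (equivalently $t \mapsto \Phi(1-t)\Phi(1)^{-1}$, using $\Phi(t+1)=\Phi(t)\Phi(1)$) is homotopic rel endpoints to $t \mapsto \Phi(t)^{-1}$ -- this is the linearization of the homotopy between $\tilde H$ and $\overline{H}$ recorded just before Proposition~\ref{homotopy invariance} -- and then one invokes the standard identity $\mu_{CZ}(\Phi^{-1}) = -\mu_{CZ}(\Phi)$, rather than a direct concatenation argument (the Conley--Zehnder index is not additive under naive concatenation the way the loop Maslov index is). The net result is the same sign reversal, so this does not affect the validity of your proof, only the precision of the justification for~(2).
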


\subsection{Hamiltonian Floer theory}
We fix a ground field $\K$ of zero characteristic (see Remark \ref{ground field}). We say that a Hamiltonian $H$ is non-degenerate if the diagonal set $\Delta:=\{(x,x)\in M\times M\}$ intersects transversally the graph of $\phi,$ $\Gamma_{\phi}:=\{(x,\phi(x))\in M\times M)\}$. For a non-degenerate $H$, we define the Floer chain complex $CF_\ast(H)$ as follows: 
$$CF_\ast(H):=\{ \sum_{ \widetilde{z} \in \widetilde{\mathcal{P}}(H)} a_{\widetilde{z} } \cdot  \widetilde{z}  : a_{\widetilde{z} } \in \K, (\forall \tau \in \mathbb{R},\ \#\{\widetilde{z} : a_{\widetilde{z} } \neq 0, \mathcal{A}_H(\widetilde{z}) \leq \tau\}<+\infty ) \}.$$

The Floer chain complex has a $\mathbb{Z}$-grading by the Conley-Zehnder index $\mu_{CZ}$. The boundary map counts certain solutions of a perturbed Cauchy-Riemann equation for a chosen $\omega$-compatible almost complex structure $J$ on $TM$, which can be viewed as isolated negative gradient flow lines of $\mathcal{A}_H$. This gives us a chain complex $(CF_{\ast} (H),\partial)$ called the Floer chain complex. Its homology is called the Floer homology of $(H,J)$ and is denoted by $HF_{\ast} (H,J)$. Often it is abbreviated to $HF_{\ast} (H)$ as Floer homology does not depend on the choice of an almost complex structure.

Recapping of a capped orbit by $A\in \pi_2(M)$ changes the action and the Conley-Zehnder index as follows:
\begin{itemize}
\item $\mathcal{A}_H([z,w\# A])=\mathcal{A}_H([z,w])-\omega(A).$
\item $\mu_{CZ}([z,w\# A])=\mu_{CZ}([z,w])-2c_1(A).$
\end{itemize}

We define the filtered Floer complex of $H$ by 
$$CF_{\ast} ^{\tau}(H):=\{\sum a_z z\in CF_{\ast}(H):\mathcal{A}_H(z)<\tau \}.$$
Since the Floer boundary map decreases the action, $(CF_{\ast} ^{\tau}(H),\partial)$ forms a chain complex. The filtered Floer homology of $H$ which is denoted by $HF_{\ast} ^{\tau}(H)$ is the homology defined by the chain complex $(CF_{\ast} ^{\tau}(H),\partial)$.

It is useful to clarify our convention of the Conley-Zehnder index since conventions change according to literature. We fix our convention by requiring that for a $C^2$-small Morse function $f:M\to \mathbb{R}$, each critical point $x$ of $f$ satisfy
$$\mu_{CZ}([x,w_x])=i(x)$$
where $i$ denotes the Morse index and $w_x$ is the trivial capping.

\subsection{Quantum homology and Seidel elements}\label{QH}

We sketch some basic definitions and properties concerning the quantum homology. Once again, we fix a ground field $\K$. For further details of the concepts sketched in this section, we refer to \cite{[MS04]}.

Let $(M,\omega)$ be a closed symplectic manifold. Define
$$\Gamma:=\pi_2(M)/(\Ker(\omega)\cap \Ker(c_1)).$$

The Novikov ring $\Lambda$ is defined by
$$\Lambda:=\{\sum_{A\in \Gamma} a_A \otimes e^A: a_A \in \K, (\forall \tau \in \mathbb{R}, \#\{a_A\neq 0,\omega(A)<\tau\}<\infty ) \}.$$

The quantum homology of $(M,\omega)$ is defined by
$$QH_\ast(M;\K):=H_\ast(M;\K) \otimes_{\K} \Lambda.$$
The quantum homology has a ring structure with respect to the quantum product denoted by $\ast$. It is defined as follows:
$$\forall a,b,c \in H_\ast (M;\K),\ (a\ast b)\circ c:=\sum_{A\in \Gamma} GW_{3,A}(a,b,c)\otimes e^A$$
where $\circ $ denotes the intersection index and $GW_{3,A}$ denotes the 3-pointed Gromov-Witten invariant in the class $A$. See \cite{[MS04]} for details.

When $(M,\omega)$ is either monotone or negative monotone, then the quantum homology ring $QH_\ast(M;\K)$ can be expressed in a simple way using the field of Laurent series. We first explain the case of monotone symplectic manifolds. In this case, $\Gamma \simeq \mathbb{Z}$ with a generator $A$ such that 
$$\omega(A)=\lambda_0,\ c_1(A)= N_M.$$

Thus the Novikov ring $\Lambda$ is the ring of formal Laurent series 
$$\K [| s^{-1}, s ]:=\{\sum_{k \geq k_0} a_k s^k :a_j \in \K , k_0 \in \Z \}$$
where $s:=e^{A}$ and the quantum homology ring $QH_\ast(M;\K)$ is
$$QH_\ast(M;\K)= H_\ast(M;\K) \otimes_\K \K[| s^{-1}, s ].$$

The quantum product is expressed by
$$\forall a,b \in H_\ast (M;\K),\ a\ast b=a \cap b + \sum_{k>0}(a\ast b)_k \cdot s^{k}.$$
The series on the right hand side runs over only non-positive powers since the elements of $\Gamma$ appearing in the sum represents pseudo-holomophic spheres and pseudo-holomophic spheres has non-negative $\omega$-area (remember that $s$ represents a sphere $A$ such that $\omega(A)=\lambda_0$).

When $(M,\omega)$ is negative monotone, by denoting the generator $A$ of $\Gamma$ which satisfies
$$\omega(A) = +\lambda_0,\ c_1(A) = -N_M$$
and by denoting $s:=e^A$, we have
$$QH_\ast(M;\K)= H_\ast(M;\K) \otimes_\K \K[| s^{-1}, s ]$$
just as in the monotone case.

\begin{example}
The quantum homology ring of $(\mathbb{C}P^n,\omega_{FS})$ is expressed as follows:
$$QH_\ast (\mathbb{C}P^n; \K )=\frac{ \K[|s^{-1},s][h]}{\langle h^{\ast (n+1)} =[\mathbb{C}P^{n}]\cdot s^{-1}  \rangle}$$
where $h \in H_{2n-2}(\mathbb{C}P^n ; \K)$ denotes the projective hyperplane class, $s$ denotes the generator of the Novikov ring and $h^{\ast (n+1)}:=\underbrace{h\ast h \ast \cdots \ast h}_{n+1-times}$.
\end{example}

There is a canonical isomorphism called the PSS-isomorphism between Floer homology and quantum homology which will be denoted by $\Phi$:
$$\Phi_{PSS,H;\K}:QH_\ast(M;\K)\stackrel{\sim}{\longrightarrow} HF_\ast(H).$$
PSS-isomorphism preserves the ring structure: for $a,b\in QH_\ast(M;\K)$,
$$\Phi_{PSS,H;\K} (a)\ast_{pp} \Phi_{PSS,H;\K}(b)=\Phi_{PSS,H;\K} (a\ast b)$$
where $\ast_{pp}$ denotes the pair-of-pants product.

Next, we briefly recall the definition of the Seidel element. The idea goes back to Seidel \cite{[Sei97]}. For a Hamiltonian loop $\psi \in \pi_1(\Ham(M,\omega))$, we can define a Hamiltonian fiber bundle
$$(M,\omega) \hookrightarrow (M_\psi, \Omega_\psi) \to (S^2,\omega_{area})$$
where, unit disks $D^2 _j,\ j=1,2$, 
$$M_\psi:= (D^2 _1 \times M) \amalg (D^2 _2 \times M )/ \sim,$$
$$(z_1,x) \sim (z_2 , y) \Longleftrightarrow z_1=z_2=e^{2\pi i t} , y=\psi^t (x).$$
The form $ \Omega_\psi$ is a family of symplectic form on $TM_\psi ^{vert}=\Ker(d\pi)$ parametrized by points of $S^2$. We fix almost complex structures $j$ on $S^2$ and $J$ on $M_\psi$ such that $d\pi$ is pseudo-holomorphic i.e. $j \circ d\pi = d\pi \circ J$ and for every $z\in S^2$, $J|_{\pi^{-1}(z)}$ defines a $\Omega_{\psi}$-compatible almost complex structure on $M_\psi$. For a section class $\sigma \in \pi_2(M_\psi)$, we denote the set of $(j,J)$-pseudo-holomorphic spheres in the class $\sigma$ by $SecCl(j,J,\sigma)$. The image of $SecCl(j,J,\sigma)$ by the evaluation map $ev: S^2 \to M$ at $z _0 \in S^2$ defines a homology class $[ev_{z_0}(SecCl(j,J,\sigma))]$ of $M$. We thus define the Seidel element $\S_{\psi,\sigma} \in QH_\ast(M;\K)$ by
$$\S_{\psi,\sigma} := \sum_{A \in \Gamma} [ev_{z_0}(SecCl(j,J,\sigma+A))] \otimes e^A.$$

\subsection{Hamiltonian spectral invariants}\label{spec inv}
Let $i^{\tau}: CF_{\ast} ^{\tau}(H)\to CF_{\ast} (H)$ be the natural inclusion map and denote by $i_{\ast} ^{\tau}:HF_{\ast} ^{\tau}(H)\to HF_{\ast} (H)$ the induced map on homology. For a quantum homology class $a\in QH_{\ast}(M;\K),$ define the spectral invariant by
$$c(H,a)=c(H,a;\K):=\inf \{\tau \in \mathbb{R}:\Phi_{PSS,H;\K}(a)\in \Im(i_{\ast} ^{\tau})\}.$$
Spectral invariants were introduced by Viterbo \cite{[Vit92]} in terms of generating functions and later their counterparts in Floer theory were studied by Schwarz for aspherical symplectic manifolds \cite{[Sch00]} and Oh for closed symplectic manifolds \cite{[Oh05]}. We list some basic properties of spectral invariants.

\begin{prop}\label{prop spec inv}
Spectral invariants satisfy the following properties where $H,G$ are Hamiltonians:
\begin{enumerate}
\item For any $a\in QH_{\ast}(M;\K) \backslash \{0\}$,
$$\mathcal{E}^- (H-G)\leq c(H,a)-c(G,a)\leq \mathcal{E}^+(H-G)$$ where $$\mathcal{E}^-(H):=\int_{t= 0} ^1 \inf_x {H_t(x)}dt,\ \mathcal{E}^+(H):=\int_{t=0} ^1 \sup_x{H_t(x)} dt,$$
$$\mathcal{E}(H):=\mathcal{E}^+(H)-\mathcal{E}^-(H).$$
\item For any $a\in QH_{\ast}(M;\K) \backslash \{0\}$,
$$c(H,a) \in \Spec (H)$$
\begin{itemize}
\item for any {\ham} $H$ when $(M,\omega)$ is rational.
\item for any non-degenerate {\ham} $H$ when $(M,\omega)$ is a general closed symplectic manifold.
\end{itemize}

\item For any $a,b\in QH_{\ast}(M;\K) \backslash \{0\}$,
$$c(H\#G,a\ast b)\leq c(H,a)+c(G,b).$$

\item Let $U$ be a non-empty subset of $M$. 
$$c(H,[M])\leq e(\Supp(H)):=\inf \{\mathcal{E}(G):\phi_{G}(\Supp(H))\cap \Supp(H)=\emptyset\}.$$

\item Let $f:M\to \mathbb{R}$ be an autonomous Hamiltonian and $a \in H_\ast (M;\K)$. For a sufficiently small $\varepsilon>0$, we have 
$$c(\varepsilon f,a)=c_{LS}(\varepsilon f,a)=\varepsilon \cdot c_{LS}(f,a)$$
where $c_{LS}(f,a)$ is the Lusternik--Schnirelmann spectral invariant defined by 
$$c_{LS}(f,a):=\inf \{\tau:a\in \Im(H_{\ast}(\{f\leq \tau\})\to H_{\ast}(M)) \}.$$

\item For $\psi \in \pi_1(\Ham(M,\omega))$, a section class $\sigma$ of the Hamiltonian fiber bundle $M_{\psi} \to S^2$, and $ a\in QH_\ast(M;\K) \backslash \{0\}$ we have
$$c(\psi^\ast H,a)=c(H,\mathcal{S}_{\psi,\sigma}\ast a)+const(\psi,\sigma)$$
where 
$$(\psi^\ast H)_t:=(H_t-K_t)\circ \psi^t,\ \psi^t:=\phi_K ^t,\ \phi_{\psi^\ast H} ^t=(\psi^t)^{-1}\circ \phi_H ^t$$
and $const(\psi,\sigma)$ denotes a constant depending on $\sigma$ and $K$.

\item For any $\psi \in \Symp_0 (M,\omega)$ and $a\in QH_{\ast}(M;\K) \backslash \{0\}$,
$$c(H\circ \psi , a) = c(H, a).$$

\end{enumerate}
\end{prop}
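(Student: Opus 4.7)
The plan is to derive each of the seven items separately using standard Floer-theoretic machinery, following the templates of Schwarz in the symplectically aspherical case and Oh in the general closed case. For (1), I would construct the continuation map between the Floer complexes of $H$ and $G$ by counting solutions of the $s$-dependent Floer equation for a homotopy $H_s$; a standard action--energy estimate shows this chain map shifts the filtration by at most $\mathcal{E}^+(H-G)$, and its homotopy-level compatibility with the PSS isomorphism then yields the upper bound. The lower bound follows by swapping the roles of $H$ and $G$. For (2), I would argue by contradiction using the closedness of $\Spec(H)$ (or local discreteness in the non-degenerate case on a general closed manifold): if $c(H,a) \notin \Spec(H)$, then $CF_\ast^\tau(H)$ would be locally constant in $\tau$ near $c(H,a)$, contradicting minimality in the definition of $c(H,a)$.

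Items (3)--(5) and (7) follow from now-standard arguments. Item (3) uses the pair-of-pants product $CF^\alpha_\ast(H) \otimes CF^\beta_\ast(G) \to CF^{\alpha+\beta}_\ast(H\#G)$, which respects the filtration additively and intertwines PSS with the quantum product. For (4), I would use the classical displacement-energy trick: if $\phi_G$ displaces $\Supp(H)$, then $\phi_H$ and the conjugate $\phi_G\phi_H\phi_G^{-1}$ commute, and iterating this commutation together with the triangle inequality of (3) and the energy estimate of (1) bounds $c(H,[M])$ by $\mathcal{E}(G)$. For (5), when $\varepsilon$ is sufficiently small, the $1$-periodic orbits of $\varepsilon f$ are exactly its critical points, no sphere bubbling occurs in the Floer equation, and the filtered Floer complex coincides with the filtered Morse complex of $\varepsilon f$, whose spectral invariant reduces classically to $\varepsilon \cdot c_{LS}(f,a)$. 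Item (7) follows by observing that a symplectic isotopy from $\id$ to $\psi$ provides a canonical action- and index-preserving identification of the filtered Floer complexes of $H$ and $H \circ \psi$, while acting as the identity on $QH_\ast(M;\K)$.

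The main obstacle is (6), the Seidel action relation. My plan is to construct, for each section class $\sigma$ of the Hamiltonian fibration $M_\psi \to S^2$, a Seidel chain map $CF_\ast(H) \to CF_\ast(\psi^\ast H)$ by counting $J$-holomorphic sections of a fibered version of the Floer equation. The key bookkeeping step is to identify a capped orbit $[\phi_H^t(x), w]$ of $H$ with a capped orbit of $\psi^\ast H$ twisted by $\sigma$, so that the action shifts by $-\omega(\sigma)$ up to the constant $const(\psi,\sigma)$ absorbing the contribution of the Hamiltonian $K$ generating $\psi$. Showing that this chain map is an isomorphism, that it intertwines PSS with quantum multiplication by the Seidel element $\S_{\psi,\sigma}$, and controlling $const(\psi,\sigma)$ precisely, is where the real technical work lies; I would follow the treatments in Seidel, McDuff, and Lalonde--McDuff--Polterovich.
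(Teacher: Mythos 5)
The paper does not supply a proof for this proposition; it explicitly defers to the literature (Viterbo, Schwarz \cite{[Sch00]}, Oh \cite{[Oh05]}, Usher \cite{[Ush08]}, \cite{[Ush10]}, Seidel \cite{[Sei97]}, etc.), so there is no in-paper argument to compare against. Taken as a compilation of the standard proofs, your sketches of items (1), (2), (3), (5), (6) and (7) hit the main ideas: continuation maps and the energy estimate for (1); closedness of $\Spec(H)$ in the rational case and (local) finiteness of action values in the non-degenerate case for (2); the filtered pair-of-pants product for (3); comparison of the filtered Floer complex of a $C^2$-small Morse function with the filtered Morse complex for (5) (extend to non-Morse $f$ by approximation and Lipschitz continuity from (1)); the filtered Seidel morphism and its intertwining of PSS with quantum multiplication by $\S_{\psi,\sigma}$ for (6); and the $\Symp_0$-naturality of the Floer data, with the isotopy used to identify cappings, for (7).

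The one genuine problem is item (4). The ``displacement implies commutation'' trick you invoke (that $\phi_H$ and $\phi_G \phi_H \phi_G^{-1}$ commute when $\phi_G$ displaces $\Supp(H)$) is the device used for Hofer-norm lower bounds \`a la Polterovich, and it loses a factor of 2; it does not produce the sharp bound $c(H,[M]) \le \mathcal{E}(G)$. The standard proof is different in structure: one first shows, using spectrality (item 2) plus the observation that $\Fix(\phi_G \circ \phi_{sH}) = \Fix(\phi_G)$ for all $s\in[0,1]$ (since $\phi_G$ displaces $\Supp(H)$), that $c(G\# sH,[M])$ is locally constant in $s$, hence $c(G\# H,[M]) = c(G,[M])$. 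Then the triangle inequality from (3) and the energy estimate from (1) give
$$c(H,[M]) \le c(\overline{G},[M]) + c(G\# H,[M]) = c(\overline{G},[M]) + c(G,[M]) \le \mathcal{E}(G),$$
and taking the infimum over displacing $G$ yields $c(H,[M]) \le e(\Supp(H))$. I would replace your sketch of (4) with this argument; as written it would not close.
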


\begin{remark}
\begin{enumerate}
\item For a set $A,$ $e(A):=\inf \{\mathcal{E}(G):\phi_{G}(A)\cap A=\emptyset\} $ is called the displacement energy of $A$.
\item Strictly speaking, spectral invariants $c(H,\cdot)$ can be defined only if $H$ is non-degenerate since they are defined via Floer homology of $H$. However, by Proposition \ref{prop spec inv} (1), one can define $c(H,\cdot)$ for a degenerate Hamiltonian $H$ by considering an approximation of $H$ by non-degenerate Hamiltonians.

\end{enumerate}
\end{remark}

The spectral norm of $H$ is defined by 
$$\gamma(H):=c(H,[M])+c(\overline{H},[M])$$
where $[M]$  denotes the fundamental class. We also define a spectral norm for Hamiltonian diffeomorphisms by
$$\gamma: \Ham(M,\omega) \to \R$$ 
$$\gamma(\phi):=\inf_{\phi_H=\phi}\gamma(H).$$

\begin{remark}\label{ground field}
In this paper, we work with a fixed ground field $\K$ of zero characteristic but some results hold for other ground rings. More precisely, Theorem \ref{C0-conti of gamma:CPn} and \ref{monotone bound extra} hold for spectral norms respectively with any ground field $\K$ and with any ground ring $R$ that is commutative and unital e.g. $\Z$. In fact, Usher proved in \cite{[Ush08]} that whenever one can define a Floer chain complex with a ground ring that is Noetherian, spectral invariants can be defined as above and satisfy properties listed in Proposition \ref{prop spec inv}. For weakly-monotone symplectic manifolds, one can define Floer chain complexes with any ground field $\K$ \cite{[MS04]}. Moreover, for monotone symplectic manifolds, one can define Floer chain complexes with any ground ring $R$ that is commutative and unital \cite{[LZ18]}, \cite{[Zap15]}. For general closed symplectic manifolds where one needs to use virtual cycle techniques in order to build Floer chain complexes \cite{[FO99]}, \cite{[FOOO09]}, \cite{[LT98]}, the ground field $\K$ should have zero characteristic.  

\end{remark}

\subsection{Barcodes}\label{barcodes}
In this subsection, we roughly explain how to define barcodes for Hamiltonian diffeomorphisms on (negative) monotone symplectic manifolds following \cite{[LSV18]}. We also refer to \cite{[PSS17]} and \cite{[UZ16]} for constructions of barcodes in symplectic topology.

A finite barcode is a finite set of intervals 
$$B=\{I_j=(a_j,b_j]  : a_j \in \R, b_j \in \R \cup \{+\infty\} \}_{1\leq j\leq N}.$$ 

Two finite barcodes $B,B'$ are said to be $\delta$-matched if, after deleting some intervals of length less than $2\delta$, there exists a bijective matching between the intervals of $B$ and $B'$ such that the endpoints of the matched intervals are less than $\delta$ of each other. The bottleneck distance of $B,B'$ is defined as follows: 
$$d_{bot}(B,B'):=\inf \{\delta>0:  B\ and\ B'\ are\ \delta-matched \}.$$

Barcodes of non-degenerate Hamiltonian diffeomorphisms were first defined in \cite{[PS16]} for symplectic manifolds that are symplectically aspherical via filtered Floer homology. For symplectically non-aspherical manifolds, (filtered) Floer homology groups do not satisfy the "finiteness" condition and in order to overcome this issue, \cite{[PSS17]} defines barcodes for non-degenerate {\hamd}s on monotone symplectic manifolds by fixing a degree. Later, in order to define barcodes of degenerate Hamiltonian diffeomorphisms on spheres, Le Roux--Seyfaddini--Viterbo \cite{[LSV18]} considered a completion of the set of finite barcodes with respect to the bottleneck distance. Their method applies more generally to (negative) monotone symplectic manifolds. We will briefly review their idea.

Let $Barcodes$ denote the set of a collection of intervals $B=\{I_j\}_{j\in \mathbb{N}}$ such that for any $\delta>0$ only finitely many of the intervals $I_j$ have lengths greater than $\delta$. The bottleneck distance $d_{bot}$ extends to $Barcodes$. The space $(Barcodes , d_{bot})$ is indeed the completion of the space of finite barcodes. Given a barcode $B=\{I_j\}_{j\in \mathbb{N}}$ and $c\in \mathbb{R}$, define $B+c = \{I_j +c\}_{j\in \mathbb{N}}$, where $I_j +c$ is the interval obtained by adding $c$ to the endpoints of $I_j$. Define an equivalence relation $\sim$ by $B\sim B'$ if $B' =B+c$ for some $c \in \mathbb{R}$. We will denote the quotient space of $Barcodes$ with the relation $\sim$ by $\widehat{Barcodes}$.

We explain briefly how to map a (possibly degenerate) Hamiltonian diffeomorphism on a (negative) monotone symplectic manifold to a barcode following \cite{[LSV18]}. Given a non-degenerate Hamiltonian $H$ and an integer $k\in \mathbb{Z}$, the filtered $k$-th Floer homology group $\{HF_k ^{\tau}(H)\}_{\tau \in \mathbb{R}}$ forms a persistence module. For this filtered vector spaces, one can define a barcode in the same way as in \cite{[PS16]} and we denote the barcode by $B_k(H)$. We define the barcode of $H$ by 
$$B(H):=\sqcup_{k}  B_k(H) \in Barcodes.$$ 
For two Hamiltonians $H,G$ such that $\phi_H=\phi_G$, their Floer homology groups coincide up to shifts of index and action filtration i.e. $HF_{\ast}^{\tau}(H)\simeq HF_{\ast+k_0}^{\tau+\tau_0}(G)$ for some $k_0\in \mathbb{Z},\tau_0\in \mathbb{R}$ (see, for example, \cite{[MS04]} Section 12.5). Thus $B(H)=B(G)$ and therefore, we define the barcode map $B$ as follows:
$$B : \Ham(M,\omega) \to \widehat{Barcodes}.$$
$$B(\phi):=B(H)$$
for any $H$ such that $\phi_H=\phi$.

Kislev-Shelukhin \cite{[KS18]} proved the following inequality to estimate the bottleneck distance between  barcodes of $\phi,\psi\in \Ham(M,\omega)$:
\begin{equation}\label{KS ineq}
d_{bot}(B(\phi),B(\psi))\leq \frac{1}{2}\gamma(\phi^{-1}\psi).
\end{equation}

This implies that once we obtain the $C^0$-continuity of $\gamma$, the map
$$B : (\Ham(M,\omega),d_{C^0}) \to (\widehat{Barcodes},d_{bot})$$
is continuous. Thus, Corollary \ref{neg mono barcode} is a direct consequence of Theorem \ref{neg mono}.

\section{Proofs}
In this section, we prove the results claimed in the introduction. We start from the case of negative monotone symplectic manifolds since the proof is based on a similar idea to the case of rational symplectic manifolds but it is simpler.

\subsection{Proofs of Theorem \ref{monotone bound extra} and \ref{neg mono}}\label{section neg mono}
We prove Theorem \ref{monotone bound extra} (1) and Theorem \ref{neg mono}. It is achieved by combining the following Propositions \ref{prop1} and \ref{lem2}.

\begin{prop}\label{prop1}
Let $(M,\omega)$ be a monotone or negative monotone symplectic manifold, $U$ be a simply connected open subset of $M$. For any $\varepsilon>0$, there exists $\delta>0$ such that if $H\in C^{\infty}(\mathbb{R}/\mathbb{Z}\times M,\mathbb{R})$ satisfies $d_{C^0}(\id,\phi_H)<\delta$ and $\phi_H(x)=x$ for all $x\in U$, then 
\begin{itemize}
\item when $(M,\omega)$ is monotone, 
$$\gamma(H)<\frac{\dim (M)}{N_M} \lambda_0 + \varepsilon.$$

\item when $(M,\omega)$ is negative monotone, 
$$\gamma(H)<\varepsilon.$$
\end{itemize}
\end{prop}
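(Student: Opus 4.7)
My plan is to follow the Buhovsky-Humilière-Seyfaddini strategy from \cite{[BHS18-2]}, augmented with index/action bookkeeping to control the quantum corrections. The central input is Proposition \ref{homotopy invariance}: because $U$ is simply connected and $\phi_H$ fixes $U$, fixing a reference $x_0\in U$ and a capping $w_0$ of $\phi_H^t(x_0)$ determines a canonical capping $w_x$ of $\phi_H^t(x)$ for every $x\in U$, with a common action $A_0$ and common Conley-Zehnder index $\mu_0$; subtracting a constant from $H$ (which does not alter $\gamma$), I may assume $A_0=0$.

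\emph{Non-degenerate perturbation.} First, replace $H$ by a non-degenerate $\tilde H$ obtained by adding a small autonomous Morse perturbation supported in a proper subset of $U$, keeping $|\gamma(\tilde H)-\gamma(H)|<\eps/3$ and $d_{C^0}(\id,\phi_{\tilde H})<2\delta$. On a slightly smaller simply connected open $U'\Subset U$, Proposition \ref{homotopy invariance} still applies and fixed points of $\phi_{\tilde H}$ in $U'$ have canonical actions within $\eps/3$ of $0$. For fixed points $y\notin U'$, a standard reparametrization argument (as in \cite{[Sey13-1]}, \cite{[BHS18-2]}) combined with the $C^0$-smallness shows the orbit $\phi_{\tilde H}^t(y)$ is contained in a ball of diameter $O(\delta)$, hence admits a small-area capping; a careful comparison of cappings in $U$ vs.\ its complement then shows the full action spectrum of $\tilde H$ lies in $\lambda_0\Z+[-\eps/3,\eps/3]$.

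\emph{Index constraint.} The spectral invariant $c(\tilde H,[M])$ is realized by a capped orbit of Conley-Zehnder index $2n=\dim M$. Starting from the canonical capping of an orbit in $U'$, the cappings reaching index $2n$ differ from the canonical one by a sphere class $kA\in\pi_2(M)$ satisfying $2c_1(kA)=\mu_0-2n$, producing an action shift of $-k\lambda_0$. In the monotone case $c_1(A)=N_M>0$, giving $k\in[-n/N_M,n/N_M]$ and $|k|\lambda_0\le(n/N_M)\lambda_0$; summing the analogous bounds for $c(\tilde H,[M])$ and $c(\overline{\tilde H},[M])$ yields $\gamma(\tilde H)<\dim(M)/N_M\cdot\lambda_0+\eps$, which is the advertised monotone bound. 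In the negative monotone case, I would invoke Lemma~3.2.(iv) of McDuff \cite{[McD10]} (recalled in the remark after Theorem \ref{neg mono}) to deduce that $\gamma$ descends to $\Ham(M,\omega)$, so $\gamma(\tilde H)=\gamma(\phi_{\tilde H})$; then a Sikorav-type fragmentation of $\phi_{\tilde H}$ into Hamiltonian diffeomorphisms supported in displaceable balls of diameter $O(\sqrt\delta)$, combined with the displacement-energy bound (property (4) of Proposition \ref{prop spec inv}) and subadditivity, gives $\gamma(\phi_{\tilde H})<\eps$.

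\emph{Main obstacle.} The main difficulty is the action-spectrum concentration step: one must guarantee that after the perturbation, every capped 1-periodic orbit has action in $\lambda_0\Z+[-\eps,\eps]$, uniformly over perturbations satisfying the geometric hypotheses. This requires executing the reparametrization trick in a way that is compatible with the simply-connected fixed-set structure on $U$ and with Proposition \ref{homotopy invariance}. An additional delicate point in the monotone case is that the index constraint $2c_1(kA)=\mu_0-2n$ restricts $k$ to the stated range only after carefully choosing the reference capping $w_0$ so that $\mu_0\in[0,2n]$, which is achievable by a suitable Morse perturbation.
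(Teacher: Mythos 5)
Your proposal shares the starting point of the paper's proof---use Proposition~\ref{homotopy invariance} to assign canonical cappings to orbits in $U$, perturb by a Morse function to achieve non-degeneracy, and then play off the Conley--Zehnder index constraint $\mu_{CZ}=2n$ against the action shift of the recapping sphere---but it diverges at several points that constitute genuine gaps.

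\emph{The perturbation scheme.} You add a Morse perturbation ``supported in a proper subset of $U$.'' This does not make $\tilde H$ non-degenerate: every point of $U$ outside the support remains a degenerate fixed point, and any degenerate fixed point of $\phi_H$ \emph{outside} $U$ is untouched. The paper instead composes $\phi_H$ with $\phi_f$ for a \emph{global} $C^2$-small Morse function $f\colon M\to\R$ all of whose critical points lie in $U$. The point is that $\phi_f$ then has no fixed point in $M\setminus U$ and moves each such point by some uniform $\delta>0$; once $d_{C^0}(\id,\phi_H)$ is smaller than this $\delta$, one gets the crucial identity $\Fix(\phi_f\circ\phi_H)=\Crit(f)$. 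Your ``reparametrization trick'' for orbits outside $U'$ is precisely the ingredient the paper's construction renders unnecessary, and as stated it does not supply what you need (it gives a small-area capping but says nothing about the Conley--Zehnder index of such an orbit, which you must also control).

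\emph{The index bookkeeping.} You try to bound $c(\tilde H,[M])$ on its own, which forces you to control the reference index $\mu_0=\mu([\phi_H^t(x_0),w_0])$. But $\mu_0$ is only defined modulo $2N_M$ under recapping, and when $N_M>n$ (e.g.\ $\C P^n$) one cannot always choose a capping with $\mu_0\in[0,2n]$; the sentence ``achievable by a suitable Morse perturbation'' does not address this, since the Morse perturbation shifts the index by the Morse index, not by $\mu_0$ itself. The paper sidesteps this entirely: it bounds the \emph{sum} $c(H\#f,[M])+c(\overline{H}\#f,[M])$, and by Proposition~\ref{opposite direction}(2) the reference index for $\overline{H}$ is $-\mu_0$, so the two $\mu_0$ terms cancel upon adding the index equations, leaving $4n=i(x)+i(y)-2c_1(A+B)$ and hence $-2n\le c_1(A+B)\le 0$. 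This cancellation is the key trick; your version omits it and therefore has no way to close the monotone case.

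\emph{The negative monotone case.} Here you abandon the structure entirely and propose McDuff's descent lemma plus a Sikorav-type fragmentation into displaceable balls. This does not work: fragmenting a $C^0$-small Hamiltonian diffeomorphism into pieces supported in balls of radius $O(\sqrt\delta)$ produces a \emph{number} of pieces that grows as $\delta\to 0$, so the resulting subadditivity bound on $\gamma$ does not go to zero. Moreover, your argument never uses the hypothesis $\phi_H|_U=\id$, which is a strong signal that something is off---if it worked, it would prove $C^0$-continuity of $\gamma$ at $\id$ directly, which is exactly what the whole scheme (Propositions~\ref{prop1} and~\ref{lem2} together) is built to establish. The paper's negative monotone argument is the same index/action computation as in the monotone case, finishing with the observation that $c_1(A+B)\le 0$ together with $\lambda<0$ gives $-\omega(A+B)=-\lambda c_1(A+B)\le 0$.

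In short: keep Proposition~\ref{homotopy invariance} and the idea of an index constraint, but replace the localized perturbation with composition by a global Morse $\phi_f$ so that $\Fix(\phi_f\circ\phi_H)=\Crit(f)$, and crucially add the index equations for $H$ and $\overline H$ so that $\mu_0$ cancels; the negative monotone bound then drops out of the same computation with no extra machinery.
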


\begin{prop}\label{lem2}$($\cite{[BHS18b]} Lemma 4.2$)$

Let $(M,\omega)$ be any closed symplectic manifold. For any $\varepsilon>0$, there exists a non-empty open ball $B \subset M$ satisfying the following properties: its displacement energy is estimated by $e(B)<\eps$ and for any $\varepsilon'>0$, there exists $\delta'>0$ such that if $\phi_H \in \Ham(M,\omega),d_{C^0}(\id_M,\phi_H)<\delta'$, then there exist $G\in C^{\infty}(\mathbb{R}/\mathbb{Z}\times M \times M,\mathbb{R})$ such that 
\begin{enumerate}
\item $\gamma(G)<\varepsilon$
\item $d_{C^0}(\id_{M\times M},\phi_G)<\varepsilon'$
\item $(\phi_H \times \phi_H ^{-1})\circ \phi_G|_{B\times B}=\id_{B\times B}$
\end{enumerate}
\end{prop}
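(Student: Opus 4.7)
The plan is to combine the energy--capacity inequality $c(G,[M\times M])\leq e(\mathrm{Supp}(G))$ of Proposition \ref{prop spec inv}(4) with the elementary fact that the product $B\times B$ of a sufficiently small open ball $B\subset M$ can be displaced in $(M\times M,\omega\oplus\omega)$ with arbitrarily small Hofer energy. The Hamiltonian $G$ will be compactly supported in a small neighborhood $V$ of $\overline{B\times B}$ and will realize $\phi_G|_{B\times B}=(\phi^{-1}\times\phi)|_{B\times B}$, so that $(\phi\times\phi^{-1})\circ\phi_G=\id$ on $B\times B$ by construction.

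First I would fix $B$ and $V$. Pick an autonomous $F\co M\to\R$ with Hofer energy $\mathcal{E}(F)<\varepsilon/8$ whose time-one map displaces some ball $B_0\subset M$; then $F(x)$, viewed as a function on $M\times M$, generates $\phi_F\times\id$ and displaces $B_0\times B_0$, so $e(B_0\times B_0)<\varepsilon/8$. Shrink $B_0$ to an open ball $B$ contained in a Darboux chart and fix an open neighborhood $V$ of $\overline{B\times B}$ still displaceable with energy below $\varepsilon/4$; this choice depends only on $\varepsilon$, as required. Given $\varepsilon'>0$, write $\phi=\phi_H$ and consider $G_0(t,x,y):=\overline{H}(t,x)+H(t,y)$, which generates the flow $\phi_H^{-t}\times\phi_H^t$ and has time-one map $\phi^{-1}\times\phi$. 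The construction of $G$ is then to multiply $G_0$ by a smooth cutoff equal to $1$ on the trajectory $\mathcal{T}:=\bigcup_{t\in[0,1]}\phi_{G_0}^t(\overline{B\times B})$ and compactly supported in $V$; the resulting $G$ is supported in $V$, and its time-one map agrees with $\phi^{-1}\times\phi$ on $B\times B$.

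The three properties then follow. Property (3) holds by construction. Property (2) is immediate since $\phi_G$ is supported in $V$, whose diameter is under our control. Property (1) follows from the energy--capacity inequality: $c(G,[M\times M])\leq e(V)<\varepsilon/4$, and since $\mathrm{Supp}(\overline{G})\subset\bigcup_t\phi_G^t(V)$ can likewise be displaced with energy below $\varepsilon/4$ once $\delta'$ is small, we get $\gamma(G)<\varepsilon$. The main obstacle is precisely ensuring $\mathcal{T}\subset V$, because the assumption $d_{C^0}(\id,\phi)<\delta'$ controls only the endpoint of the isotopy $\phi_H^t$ and not its intermediate values; a priori the flow of $G_0$ could make $\overline{B\times B}$ sweep through the whole of $M\times M$. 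This is the technical heart of the argument, and it is overcome by a fragmentation trick: a $C^0$-small Hamiltonian diffeomorphism may be decomposed as a composition of Hamiltonian diffeomorphisms each supported in a small displaceable ball, yielding (after concatenating generating Hamiltonians) a Hamiltonian representative of $\phi$ whose entire isotopy is uniformly $C^0$-close to the identity. Once this reparametrization is in hand, $\delta'$ can be chosen so that $\mathcal{T}\subset V$ and the argument closes.
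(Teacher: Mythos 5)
Your proposal takes a genuinely different route from the one in \cite{[BHS18-2]} on which the paper relies; it also contains two gaps, the first of which is fatal to the argument as written.

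\emph{What the paper/\cite{[BHS18-2]} actually does.} The paper does not reprove Proposition~\ref{lem2} but refers to \cite{[BHS18-2]}; however the construction is revealed in Step~3 of the proof of Theorem~\ref{alternative around an integer}. One fixes an autonomous $Q$ supported near $\overline{B\times B}$ whose time-one map is the coordinate swap $(p,q)\mapsto(q,p)$ on $B\times B$, and sets $G=(0\oplus\overline{H})\#\overline{Q}\#(0\oplus H)\#Q$, so that $\phi_G$ is the commutator $(\id\times\phi^{-1})\circ\phi_Q^{-1}\circ(\id\times\phi)\circ\phi_Q$. Property~(3) holds because the swap transports the second factor to the first and back; property~(2) holds because a commutator of the $C^0$-small map $\id\times\phi$ with the fixed Lipschitz map $\phi_Q$ is $C^0$-small, with an estimate depending only on $d_{C^0}(\id,\phi)$ and $\|d\phi_Q^{\pm 1}\|$; property~(1) follows from the triangle inequality and conjugation invariance of spectral invariants (Proposition~\ref{prop spec inv}(7)): $c(G,[M\times M])\leq c(\overline{Q},[M\times M])+c(Q,[M\times M])\leq 2e(\Supp Q)$, and similarly for $\overline{G}$. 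The whole point of the swap is to avoid the two difficulties below.

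\emph{Gap 1 (fatal).} Your justification of property~(2) --- ``$\phi_G$ is supported in $V$, whose diameter is under our control'' --- is not correct. Since $G$ must agree with $\overline{H}\oplus H$ on $B\times B$, $\Supp(G)$ contains $B\times B$, so $V\supset\overline{B\times B}$ and $\mathrm{diam}(V)\geq\mathrm{diam}(B\times B)$. But in the statement $B$ is chosen once $\varepsilon$ is fixed and cannot shrink with $\varepsilon'$; for $\varepsilon'<\mathrm{diam}(B\times B)$ your bound is vacuous. Worse, on the transition region of the cutoff the vector field of $G=\chi G_0$ contains the extra term $G_0\,X_\chi$, which is tangent to the level sets of $\chi$ and can therefore move points by an amount comparable to $\mathrm{diam}(V)$, with no control from $d_{C^0}(\id,\phi)$. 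In the BHS construction the $C^0$-bound on $\phi_G$ comes from the commutator structure, not from the support, so this issue never arises.

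\emph{Gap 2.} You correctly identify that keeping $\mathcal{T}\subset V$ requires the entire isotopy $\phi_H^t$, not just its endpoint, to be $C^0$-small, and you appeal to a ``fragmentation trick'' asserting that a $C^0$-small Hamiltonian diffeomorphism always admits a generating Hamiltonian whose whole isotopy is uniformly $C^0$-close to $\id$. This is a nontrivial claim that you neither prove nor cite, and the naive estimate it suggests does not close: writing $\phi=\phi_1\circ\cdots\circ\phi_N$ with each $\phi_i$ supported in a ball of diameter $\eta$ only bounds the concatenated isotopy within $N\eta$ of $\id$, and $N\to\infty$ as $\eta\to 0$. Again, the BHS construction sidesteps the issue entirely: the estimate $\gamma(G)\leq 2\gamma(Q)$ via conjugation invariance does not require $G$ to be compactly supported near $B\times B$, so no control of the isotopy is needed.
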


Proposition \ref{lem2}, proven by Buhovsky-Humili\`ere-Seyfaddini \cite{[BHS18b]}, claims that given a Hamiltonian diffeomorphism $\phi$ on $M$, one can always deform  the Hamiltonian diffeomorphism $\phi \times \phi^{-1}$ to a Hamiltonian diffeomorphism on $M\times M$ that does not move any point on a certain open set by composing with a both $C^0$- and $\gamma$-small Hamiltonian diffeomorphism on $M\times M$.

We postpone the proof of Proposition \ref{prop1} and first briefly review the proof of Proposition \ref{lem2} due to  \cite{[BHS18b]} as we will need some parts of the proof in the proof of Claim \ref{step 1}.

\begin{proof}(of Proposition \ref{lem2} by \cite{[BHS18b]})

Let $\eps>0$ and fix any non-empty open ball $B'$ whose displacement energy satisfies $e(B') < \eps /4$.

\begin{claim}\label{def of Q}$($Claim 4.3 \cite{[BHS18b]}$)$

There exists a Hamiltonian $Q$ on $M\times M$ and an open ball $B''$ in $M$ such that
\begin{itemize}
\item $Supp(Q) \subset B' \times B'$.
\item $\forall (x,y ) \in B''\times B'',\ \phi_{Q} (x,y) = (y,x).$
\item  Denote the origin point of the ball $B'$ by $x_0$. The point $(x_0,x_0)$ is fixed by the flow of $Q$: $\forall t, \phi_Q ^t ((x_0,x_0)) = (x_0,x_0).$
\end{itemize}
\end{claim}

Now, define
\begin{equation}\label{def of G}
G:= (0 \oplus \ovl{H}) \# \ovl{Q} \# (0 \oplus H) \# Q. 
\end{equation}
This Hamiltonian $G$ satisfies the following:

$$ c(G,[M\times M]) = c((0\oplus \ovl{H})\# \overline{Q}\# (0\oplus H)\#Q,[M\times M])$$
$$\leq c(( \overline{0\oplus H})\# \overline{Q}\# (0\oplus H),[M\times M])+c(Q,[M\times M])$$
$$=c(\overline{Q},[M\times M])+c(Q,[M\times M])$$
$$\leq e(Supp (\overline{Q}))+e(Supp (Q)) \leq 2e(B' \times B')$$
and as the ball $B'$ was chosen so that $e(B')< \eps/4$, we get

\begin{equation}\label{step 1 estimate}
 c(G,[M\times M]) <\varepsilon/2 .
\end{equation}

 We can estimate $c(\ovl{G},[M\times M])$ in the same way and we get
$$\gamma (G) <\eps .$$
Now, let $B$ be a ball whose closure is included in $B''$ and make sure that the origin of $B$ is the same as the origin of $B'$, namely $x_0$. If we require $\phi_H$ to be $C^0$-close enough to $\id$ so that $\phi_H (B) \subset B''$, then for all $(x,y ) \in B\times B$, we have 
$$(\phi_H \times \phi_H ^{-1})\circ \phi_G (x,y) = (x,y).$$
This finishes the proof of Proposition \ref{lem2}.
\end{proof}

Now, before proving Proposition \ref{prop1}, we prove Theorem \ref{monotone bound extra} (1) and Theorem \ref{neg mono}.

\begin{proof}(of Theorem \ref{monotone bound extra} (1) and Theorem \ref{neg mono})

Note that if $(M,\omega)$ is (negative) monotone, then so is $(M\times M,\omega \oplus \omega)$. Given any $\varepsilon>0$, we can take a ball $B$ in $M$ as in Proposition \ref{lem2}. By Proposition \ref{lem2}, for any $\eps'>0$, there exists $\delta'>0$ such that if $d_{C^0}(\id_M,\phi)<\delta'$, then there exist $G\in C^{\infty}(\mathbb{R}/\mathbb{Z}\times M \times M,\mathbb{R})$ such that 
\begin{enumerate}
\item $\gamma(G)<\varepsilon$
\item $d_{C^0}(\id_{M\times M},\phi_G)<\eps'$
\item $[(\phi \times \phi^{-1})\circ \phi_G]|_{B\times B}=\id_{B\times B}$
\end{enumerate}

We take $\eps'>0$ small enough so that
$$d_{C^0}(\id_{M\times M}, (\phi \times \phi^{-1})\circ \phi_G) < \delta$$
is satisfied where $\delta>0$ is a positive number as in Proposition \ref{prop1} which is determined by $B\times B$ and $\eps>0$. This is achievable as
$$d_{C^0}(\id_{M\times M}, (\phi \times \phi^{-1})\circ \phi_G) \leq d_{C^0}(\id_{M\times M}, \phi_G)+d_{C^0}(\phi_G, (\phi \times \phi^{-1})\circ \phi_G)$$
$$= d_{C^0}(\id_{M\times M}, \phi_G)+d_{C^0}(\id_{M\times M}, \phi \times \phi^{-1})\leq \eps'+2 \delta'.$$

Now, take any Hamiltonian $H$ generating $\phi$: $\phi_H=\phi$. Then $(H\oplus \overline{H})\# G$ generates $(\phi \times \phi^{-1})\circ \phi_G$ so if  by Proposition \ref{prop1}, we have 
\begin{itemize}
\item if $(M,\omega)$ is monotone, then
$$\gamma((H\oplus \overline{H})\# G)<\frac{ \dim (M \times M)}{ N_{M\times M}}  \lambda_0+ \varepsilon=2\cdot \frac{  \dim (M)}{ N_{M}}  \lambda_0 + \varepsilon.$$
\item if $(M,\omega)$ is negative monotone, 
$$\gamma((H\oplus \overline{H})\# G)<\varepsilon.$$
\end{itemize}

As $\gamma(H\oplus \overline{H})=2\gamma(H)$ (by Theorem 5.1. in [EP09]), we have
$$2 \gamma(H) = \gamma(H\oplus \overline{H})\leq \gamma((H\oplus \overline{H})\# G)+\gamma(\overline{G})$$
$$=\gamma((H\oplus \overline{H})\# G)+\gamma(G)<\gamma((H\oplus \overline{H})\# G)+\varepsilon.$$
Therefore, 
\begin{itemize}

\item if $(M,\omega)$ is monotone, then 
$$2 \gamma(H)< 2\cdot \frac{  \dim (M)}{ N_{M}} \lambda_0 + \varepsilon + \varepsilon,$$
thus 
$$\gamma(H) < \frac{\dim (M)}{N_M} \lambda_0 + \eps.$$ 
This proves Theorem  \ref{monotone bound extra} (1). 
\item if $(M,\omega)$ is negative monotone, then $2\gamma(H)<2\varepsilon$, thus
$$\gamma(H) < \eps.$$ 
This proves Theorem \ref{neg mono}.
\end{itemize}

\end{proof}

\begin{proof}(of Theorem \ref{neg mono} (2))

Once we know that spectral norms are well-defined on $\Ham(M,\omega)$, the $C^0$-continuity at $\id$ follows directly from Theorem \ref{neg mono} (1). The $C^0$-continuity at $\phi \in \Ham(M,\omega)$ is a consequence of the triangle inequality: for any $\varepsilon>0$, if we take $d_{C^0}(\phi,\psi)$ small enough so that $d_{C^0}(\id,\phi^{-1}\circ \psi)<\delta$ where $\delta$ is taken as in Theorem \ref{neg mono} (1). Then,
$$|\gamma(\psi)-\gamma(\phi)|\leq \gamma(\phi^{-1}\circ \psi) < \varepsilon.$$
By using the $C^0$-continuity, we can define the spectral norm for Hamiltonian homeomorphisms in the following way: for $\phi \in \ovl{\Ham}(M,\omega)$, take a sequence $\phi_k \in \Ham (M,\omega)$ that $C^0$-converges to $\phi$. Define $\gamma(\phi):= \lim_{k \to +\infty} \gamma (\phi_k)$. Note that any approximating sequence will give the same limit. This completes the proof of Theorem \ref{neg mono} (2).
\end{proof}

We now prove Proposition \ref{prop1}.

\begin{proof}(of Proposition \ref{prop1})

Take a Morse function $f:M\to \mathbb{R}$ whose critical points are located in $U$. We assume that $f$ is $C^2$-small enough so that its Hamiltonian flow does not admit any non-constant periodic points and that $osc(f):=\max{f}-\min{f}<\varepsilon$. Since $\phi_f$ has no fixed points in $M\backslash U$, there exists $\delta>0$ such that $$\forall x\in M\backslash U,\ d(x,\phi_f(x))>\delta.$$

We will now see that if $\phi_H$ is $C^0$-close enough to $\id$, then
$$\Crit(f)=\Fix(\phi_H \circ \phi_f).$$
First, $\Crit(f)\subset \Fix(\phi_H \circ \phi_f)$ follows from $\forall x\in U, \phi_H(x)=x$. Next, we will see $\Fix(\phi_H \circ \phi_f) \subset \Crit(f)$ if $\phi_H$ is $C^0$-close enough to $\id$. Let $x\in \Fix(\phi_H \circ \phi_f).$
\begin{enumerate}
\item Assume $x\in U.$ Then, $\phi_f(x)=\phi_H \circ \phi_f(x)=x$ and since $\Crit(f) = \Fix (\phi_f)$, we have $x\in \Crit(f).$ 
\item Assume $x\notin U$. Then, $\phi_H(x) \notin U$ and
$$d_{C^0}(x,\phi_H \circ \phi_f(x))\geq d_{C^0}(\phi_f(x),x) - d_{C^0}(\phi_f(x),\phi_H\circ \phi_f (x))$$
$$\geq \delta -d_{C^0}(\id,\phi_H).$$
If we take $\phi_H$ to be $C^0$-close enough to $\id$ so that the last equation become positive, then $x\notin \Fix(\phi_H \circ \phi_f).$ Thus $x\in \Fix(\phi_H \circ \phi_f)$ implies $x\in U$ and $x=\phi_H \circ \phi_f (x)=\phi_f(x).$ Thus $x\in \Crit(f).$
\end{enumerate}
We have proven that if $\phi_H$ is $C^0$-close enough to $\id$, then
$$\Crit(f)=\Fix(\phi_H \circ \phi_f).$$
Thus, for such $\phi_H$ and for any $x \in \Crit(f)=\Fix(\phi_H\circ \phi_f)$, its orbit is $\phi_{H\# f} ^t(x)=\phi_H ^t (x)$ and thus,
 $$\Spec(H\# f)=\{f(x)+ \mathcal{A}_{H}([\phi_{H} ^t(x),w]):x\in \Crit(f),[\phi_{H} ^t(x),w]\in \Crit( \mathcal{A}_{H}) \}.$$

Take any $x_0\in \Crit(f)$ and a capping $w_0:D^2\to M$ of the orbit $\phi_H ^t(x_0)$ i.e. $w_0(e^{2\pi i t})=\phi_H ^t(x_0)$. We fix this capped orbit $[\phi_H ^t (x_0),w_0]$ in the sequel.

For any $x\in \Crit(f)$, define a capping $w_x:D^2\to M$ of the orbit $\phi_H ^t(x)$ by $$w_x(s e^{2\pi i t}):=\phi_H ^t (c(s))\# w_0$$ where $c:[0,1]\to U$ is a smooth path from $x_0$ to $x$ and $\phi_H ^t (c(s))\# w_0$ denotes the gluing of $\phi_H ^t (c(s))$ and $w_0$ along $\phi_H ^t(x_0)$.

Recall that $\gamma(H)=c(H,[M])+c(\overline{H},[M])$ and we will estimate $c(H,[M])$ and $c(\overline{H},[M])$ separately.

By the triangle inequality, 
$$c(H,[M])\leq c(H\# f,[M])+c(\overline{f},[M]).$$
For the second term we know that 
$$c(\overline{f},[M])=c(-f,[M])\leq \varepsilon$$
as $f$ is $C^2$-small and $osc(f)<\varepsilon$.

For the first term, 
$$c(H\# f,\cdot)\in \Spec(H\# f)$$
so there exists a point $x\in \Crit(f)$ and a sphere $A:S^2\to M$ such that
\begin{itemize}
\item $\mathcal{A}_{H\# f}([\phi_{H} ^t(x),w_x\#A])=c(H\# f,[M]).$
\item $\mu_{CZ}([\phi_{H} ^t(x),w_x\#A])=\deg ([M])=2n.$
\end{itemize}
The sphere $A$ plays the role of correcting the capping of the capped orbit $[\phi_H ^t(x),w_x]$ to achieve the appropriate capped orbit which realizes the spectral invariant $c(H\# f,[M])$.

The action and the index can be rewritten in the following way where $i$ denotes the Morse index:
\begin{itemize}
\item $\mathcal{A}_{H\# f}([\phi_{H} ^t(x),w_x\#A])=f(x)+\mathcal{A}_{H}([\phi_H ^t(x),w_x])-\omega(A).$
\item $\mu_{CZ}([\phi_{H} ^t(x),w_x\#A])=i(x)+2\mu([\phi_H ^t(x),w_x])-2c_1(A).$
\end{itemize}
Thus we get the following two equations.

\begin{subequations}
\begin{align}
c(H\# f,[M])=f(x)+\mathcal{A}_{H}([\phi_H ^t(x),w_x])-\omega(A).\label{action1} \\
2n=i(x)+2\mu([\phi_H ^t(x),w_x])-2c_1(A).\label{index1}
\end{align}
\end{subequations}

In the same way, there exist a point $y\in \Crit(f)$ and a sphere $B:S^2\to M$ such that

\begin{subequations}
\begin{align}
c(\overline{H}\# f,[M])=f(y)+\mathcal{A}_{\overline{H}}([\phi_{\overline{H}} ^t(y),\overline{w_y}])-\omega(B).\label{action2} \\
2n=i(y)+2\mu([\phi_{\overline{H}} ^t(y),\overline{w_y}])-2c_1(B).\label{index2}
\end{align}
\end{subequations}

Here, the capping $\overline{w_y}$ is $$\overline{w_y}(se^{2\pi i t}):=w_y(se^{2\pi i (-t)}).$$
Thus, by adding the equations \ref{action1} and \ref{action2}, we obtain
$$\gamma(H)\leq 2c(-f,[M])+c(H\# f,[M])+c(\overline{H}\# f,[M])$$
$$=2c(-f,[M])+f(x)+f(y)+\mathcal{A}_{H}([\phi_H ^t(x),w_x])+\mathcal{A}_{\overline{H}}([\phi_{\overline{H}} ^t(y),\overline{w_y}])-\omega(A+B)$$
$$\leq 4\varepsilon-\omega(A+B)$$
where Proposition \ref{homotopy invariance} and \ref{opposite direction} were used in the last line. In the same way, by adding the equalities \ref{index1} and \ref{index2}, we obtain 
$$4n=i(x)+i(y)+2\mu([\phi_H ^t(x),w_x])+2\mu([\phi_{\overline{H}} ^t(y),\overline{w_y}])-2c_1(A+B)$$
$$=i(x)+i(y)-2c_1(A+B).$$
Now, since $i(x),i(y)$ are Morse indices, we have $$0\leq i(x),i(y)\leq 2n=\dim (M)$$ and thus, 
$$0\leq 4n+2c_1(A+B)\leq 4n.$$ 
Thus, 
$$-2n \leq c_1(A+B)\leq 0.$$ 
Note that up to now, we have not used the (negative) monotonicity of $(M,\omega)$. Now,

\begin{itemize}
\item if $(M,\omega)$ is negative monotone, then 
$$-\omega(A+B)=-\lambda \cdot c_1(A+B) \leq 0.$$
\item if $(M,\omega)$ is monotone, then 
$$-\omega(A+B)=-\lambda \cdot c_1(A+B) \leq 2n \lambda = \frac{2n}{N_M} \lambda_0 .$$
\end{itemize}
Therefore, 
\begin{itemize}
\item if $(M,\omega)$ is negative monotone, then 
$$\gamma(H)\leq 4\varepsilon.$$
\item if $(M,\omega)$ is monotone, then 
$$\gamma(H)\leq  \frac{2n}{N_M} \lambda_0 +4\varepsilon .$$
\end{itemize}

 This completes the proof of Proposition \ref{prop1}.
\end{proof}

\subsection{Proof of Theorem \ref{alternative around an integer}}

The goal of this subsection is to prove Theorem \ref{alternative around an integer} which includes Theorem \ref{rational} as a special case. The argument is similar to the negative monotone case. We start by some additional definitions.

\begin{definition}
Let $(M,\omega)$ be any closed symplectic manifold and $a,b \in H_\ast (M;\K )\backslash \{0\}.$ We define the following:
$$\gamma_{a,b}:C^{\infty}(\mathbb{R}/\mathbb{Z}\times M,\mathbb{R})\to \mathbb{R},$$
$$\gamma_{a,b}(H):=c(H,a)+c(\overline{H},b).$$
\end{definition}

\begin{remark}
Of course, $\gamma_{[M],[M]}=\gamma$ where $\gamma$ is the usual spectral norm.
\end{remark}

\begin{theo}\label{alternative around an integer}
Let $(M,\omega)$ be a rational symplectic manifold and $a,b \in H_\ast (M;\K) \backslash \{0\}.$ For any $\varepsilon>0,$ there exists $\delta>0$ such that if $d_{C^0}(\id,\phi_H)<\delta,$ then 
$$|\gamma_{a,b}(H)-l\cdot \lambda_0|<\varepsilon$$
for some integer $l\in \mathbb{Z}$ depending on $a,b \in H_\ast (M;\K) \backslash \{0\}$ and $H$.
\end{theo}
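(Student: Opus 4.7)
The plan follows the two-stage strategy used in the proof of Theorem~\ref{neg mono}: first establish an analog of Proposition~\ref{prop1} adapted to the rational setting and to the bilinear quantity $\gamma_{a,b}$; then combine with Proposition~\ref{lem2} on the doubled manifold $(M\times M,\omega\oplus\omega)$, which is again rational with the same rationality constant $\lambda_0$.

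\emph{Stage~1 (rational analog of Proposition~\ref{prop1}).} I claim: if $U\subset M$ is a nonempty simply connected open set and $\phi_H|_U=\id$, then for $\phi_H$ sufficiently $C^0$-close to $\id$ there exists $l\in\mathbb{Z}$ with $|\gamma_{a,b}(H)-l\lambda_0|<\varepsilon$. Pick a $C^2$-small Morse function $f$ with $\Crit(f)\subset U$ and $\osc(f)<\varepsilon$ and arrange $\Fix(\phi_{H\#f})=\Crit(f)$ exactly as in the proof of Proposition~\ref{prop1}. By Proposition~\ref{homotopy invariance}, once cappings are transported through $U$ from a fixed basepoint $x_0$, all capped orbits of $H$ over points in $\Crit(f)$ share the common base action $\mathcal{A}_H([\phi_H^t(x_0),w_0])$; Proposition~\ref{opposite direction} then yields $\Spec(\overline{H\#f})=-\Spec(H\#f)$. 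Writing $c(H\#f,a)$ and $c(\overline{H\#f},b)$ in terms of realizing critical points $x_a,y_b\in\Crit(f)$ and capping-correction spheres $A_a,B_b\in\pi_2(M)$, the basepoint action contributions cancel in the sum and one obtains
$$\gamma_{a,b}(H\#f)\;=\;\bigl(f(x_a)-f(y_b)\bigr)+\omega(B_b-A_a).$$
Rationality forces $\omega(B_b-A_a)\in\lambda_0\mathbb{Z}$, $|f(x_a)-f(y_b)|<2\varepsilon$, and the triangle inequality $|\gamma_{a,b}(H)-\gamma_{a,b}(H\#f)|\leq\gamma(f)=\osc(f)<\varepsilon$ closes the claim.

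\emph{Stage~2 (reduction via Proposition~\ref{lem2}).} Proposition~\ref{lem2} supplies a ball $B\subset M$ and, for $d_{C^0}(\id,\phi_H)$ small enough, a Hamiltonian $G$ on $M\times M$ with $\gamma(G)<\varepsilon$, $d_{C^0}(\id,\phi_G)<\varepsilon'$, and $F:=(H\oplus\overline H)\#G$ fixing $B\times B$ pointwise. Applying Stage~1 to $(M\times M,a\times b,b\times a,F,B\times B)$ gives an integer $l$ with $|\gamma_{a\times b,b\times a}(F)-l\lambda_0|<\varepsilon$. The K\"unneth identity $c(H\oplus K,a\times b)=c(H,a)+c(K,b)$ combined with $\overline{H\oplus\overline H}=\overline H\oplus H$ (as Hamiltonian paths) yields $\gamma_{a\times b,b\times a}(H\oplus\overline H)=2\gamma_{a,b}(H)$, and the triangle-type inequality $|\gamma_{a\times b,b\times a}(K\#L)-\gamma_{a\times b,b\times a}(K)|\leq\gamma(L)$ gives $|\gamma_{a\times b,b\times a}(F)-2\gamma_{a,b}(H)|\leq\gamma(G)<\varepsilon$. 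Combining, $|2\gamma_{a,b}(H)-l\lambda_0|<2\varepsilon$.

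\emph{Main obstacle.} The estimate just obtained localizes $\gamma_{a,b}(H)$ within $\varepsilon$ of $l\lambda_0/2$, which is an integer multiple of $\lambda_0$ only when $l$ is even; since for $\varepsilon<\lambda_0/4$ the conclusion of the theorem forces $l$ to be even, the decisive step is to guarantee this parity intrinsically from the Stage-1 computation on $M\times M$. I expect this to follow by exploiting the swap symmetry $\sigma\colon(x,y)\mapsto(y,x)$ of $(M\times M,\omega\oplus\omega)$ --- for instance by choosing $G$ in Proposition~\ref{lem2} to be $\sigma$-invariant --- so that the capping-correction spheres $\vec A_a,\vec B_b\in\pi_2(M)\oplus\pi_2(M)$ realizing the Stage-1 formula can be taken $\sigma$-symmetric (e.g.\ diagonal $\vec A_a=(P_a,P_a)$), in which case $(\omega\oplus\omega)(\vec A_a+\vec B_b)\in 2\lambda_0\mathbb{Z}$ and $l$ is forced to be even. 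Once that parity is in hand, setting $l':=l/2\in\mathbb{Z}$ yields $|\gamma_{a,b}(H)-l'\lambda_0|<\varepsilon$, which (after the harmless absorption of the numerical constants into $\varepsilon$) is the desired statement.
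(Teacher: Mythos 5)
Your Stage~1 (the rational analog of Proposition~\ref{prop1} for $\gamma_{a,b}$, with the base action $\mathcal{A}_H([\phi_H^t(x_0),w_0])$ cancelling between $c(H\#f,a)$ and $c(\overline{H\#f},b)$ and rationality forcing the residue into $\lambda_0\mathbb{Z}$) is sound and close in spirit to what the paper does. The problem is Stage~2. By pairing Stage~1 with the \emph{spectral norm} $\gamma_{a\times b,\,b\times a}$ on $M\times M$, you work with a \emph{sum} of two spectral invariants, and the K\"unneth computation $\gamma_{a\times b,\,b\times a}(H\oplus\overline H)=2\gamma_{a,b}(H)$ introduces the factor of $2$. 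Stage~1 on $M\times M$ then localizes $2\gamma_{a,b}(H)$ near $l\lambda_0$, which yields the theorem only if $l$ is even. You recognize this as the ``main obstacle,'' but the proposed fix does not close it: the critical point and correction sphere realizing a spectral invariant are whatever they are --- there is no freedom to prescribe them to be $\sigma$-symmetric. Moreover, even if $G$ is chosen $\sigma$-invariant, $\sigma\phi_F\sigma=(\phi_H^{-1}\times\phi_H)\phi_G$, which is in general neither $\phi_F$ nor $\phi_F^{-1}$, so there is no swap symmetry of the Floer data to exploit. As it stands, the parity of $l$ is exactly equivalent to the statement you are trying to prove, so Stage~2 is circular.

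The paper sidesteps the factor of $2$ by working with a \emph{single} spectral invariant on $M\times M$, namely $c\bigl((\overline{H'}\oplus H')\#G\#F,\ a\otimes b\bigr)$, which via K\"unneth is identified with $\gamma_{a,b}(H')$ directly, with no doubling. The price is that the ``base action'' $\mathcal{A}_{(\overline{H'}\oplus H')\#G}\bigl([\phi^t((x_0,x_0)),w_{x_0,x_0}]\bigr)$ no longer cancels against a second spectral invariant, so it must be computed. This is the content of Step~3 of the paper's proof: it uses the explicit BHS form $G=(0\oplus\overline{H'})\#\overline Q\#(0\oplus H')\#Q$, with $Q$ an autonomous Hamiltonian swapping the two $B$-factors and fixing $(x_0,x_0)$, decomposes the concatenated orbit through $(x_0,x_0)$ into pieces, and cancels the $H'$- and $\overline{H'}$-contributions using Proposition~\ref{opposite direction}(3). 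For that decomposition to consist of genuine loops one first conjugates $H$ by a diffeomorphism $\psi$ supplied by Proposition~\ref{prop:around an integer} so that $\phi_{H'}(x_0)=x_0$ (and Proposition~\ref{prop spec inv}(7) guarantees $\gamma_{a,b}(H')=\gamma_{a,b}(H)$). In short: the cancellation you place between two spectral invariants, the paper places \emph{within a single orbit} via the explicit structure of $G$ --- and that is precisely what eliminates the factor of $2$. Your write-up is missing this mechanism, so the gap is real.
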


Before proving Theorem \ref{alternative around an integer}, we will see the following consequence on the $C^0$-continuity of the spectral norm.

\begin{corol}\label{bound implies conti}
Let $(M,\omega)$ be a rational symplectic manifold. Assume that there exist constants $0<\kappa<1$ and $\delta'>0$ such that if $\phi \in \Ham(M,\omega),\ d_{C^0}(\id,\phi)\leq \delta'$, then $\gamma(\phi)\leq \kappa \cdot \lambda_0$. Then, $\gamma:\Ham(M,\omega) \to \R$ is $C^0$-continuous.
\end{corol}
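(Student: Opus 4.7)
The plan is to reduce $C^0$-continuity of $\gamma$ on all of $\Ham(M,\omega)$ to $C^0$-continuity at the identity, and then to combine the hypothesis with Theorem \ref{rational} to force $\gamma$ to be not only close to some integer multiple of $\lambda_0$, but close to $0$ specifically.

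\textbf{Step 1 (continuity at $\id$).} Given $\varepsilon>0$, I would first shrink $\varepsilon$ so that $\varepsilon < \tfrac{(1-\kappa)\lambda_0}{3}$, and then let $\delta_1>0$ be the constant produced by Theorem \ref{rational} for this $\varepsilon$. Set $\delta_0:=\min(\delta_1,\delta')$ and suppose $\phi\in\Ham(M,\omega)$ satisfies $d_{C^0}(\id,\phi)<\delta_0$. By the hypothesis of the corollary, $\gamma(\phi)\leq \kappa\lambda_0$. By definition of $\gamma(\phi)=\inf_{\phi_H=\phi}\gamma(H)$, pick a Hamiltonian $H$ with $\phi_H=\phi$ and
$$\gamma(H) < \gamma(\phi)+\varepsilon \leq \kappa\lambda_0+\varepsilon.$$
Since $d_{C^0}(\id,\phi_H)<\delta_1$, Theorem \ref{rational} gives an integer $l\in\Z$ with $|\gamma(H)-l\lambda_0|<\varepsilon$. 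Because $\gamma(H)\geq 0$ and $\varepsilon<\lambda_0$, we must have $l\geq 0$. If $l\geq 1$, then $\gamma(H) > \lambda_0-\varepsilon$; but our choice of $\varepsilon$ gives $\lambda_0-\varepsilon > \kappa\lambda_0+\varepsilon$, contradicting the bound on $\gamma(H)$. Hence $l=0$, and so $\gamma(\phi)\leq\gamma(H)<\varepsilon$. This is continuity of $\gamma$ at $\id$.

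\textbf{Step 2 (continuity at a general $\phi$).} This is a standard triangle-inequality argument. From Proposition \ref{prop spec inv}(3) together with $\gamma(\phi^{-1})=\gamma(\phi)$, one gets
$$|\gamma(\phi)-\gamma(\psi)| \leq \gamma(\phi^{-1}\psi)$$
for all $\phi,\psi\in\Ham(M,\omega)$. Since $\phi^{-1}$ is uniformly continuous on the compact manifold $M$, for any $\eta>0$ there exists $\eta'>0$ such that $d_{C^0}(\phi,\psi)<\eta'$ implies $d_{C^0}(\id,\phi^{-1}\psi)<\eta$. Taking $\eta<\delta_0$ from Step 1 then forces $\gamma(\phi^{-1}\psi)<\varepsilon$, and hence $|\gamma(\phi)-\gamma(\psi)|<\varepsilon$.

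\textbf{Main obstacle.} There is no genuine obstacle: the corollary is essentially a packaging of Theorem \ref{rational} with the extra quantitative hypothesis. The only subtle point is the quantitative choice of $\varepsilon$ in Step 1 — one has to be careful that the margin $\lambda_0-\varepsilon-(\kappa\lambda_0+\varepsilon)$ remains strictly positive, which is precisely what makes the integer $l$ collapse to $0$ and upgrades Theorem \ref{rational} from a ``$\gamma$ is near a lattice'' statement to honest continuity.
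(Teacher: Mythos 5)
Your proof is correct and follows essentially the same route as the paper's: reduce to continuity at $\id$ via the triangle inequality $|\gamma(\phi)-\gamma(\psi)|\leq\gamma(\phi^{-1}\psi)$, pick a nearly-optimal Hamiltonian $H$ for $\phi$, and use the hypothesis together with Theorem \ref{rational} to force the integer $l$ to be $0$. The only (inessential) difference is that you take $\varepsilon<\tfrac{(1-\kappa)\lambda_0}{3}$ where the paper takes $\varepsilon<\tfrac{(1-\kappa)\lambda_0}{2}$; either margin suffices.
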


Corollary \ref{bound implies conti} will be used to obtain the $C^0$-continuity of the spectral norm for $\C P^n$ in Theorem \ref{C0-conti of gamma:CPn}.

\begin{proof}(of Corollary \ref{bound implies conti})

It is enough to prove the continuity at $\id$ since $|\gamma(\phi)-\gamma(\psi)|\leq \gamma(\psi^{-1}\phi).$ For a given $\varepsilon\in (0,\frac{1}{2}(1-\kappa)\lambda_0)$, take $\delta>0$ as in Theorem \ref{rational}. Let $$\phi \in \Ham(M,\omega),\ d_{C^0}(\id,\phi)<\min\{\delta,\delta'\}.$$ There exists a Hamiltonian $H$ such that $\phi_H=\phi$ and 
$$\gamma(H)<\gamma(\phi)+\varepsilon< \kappa \cdot \lambda_0 +\frac{1}{2}(1-\kappa)\lambda_0$$
$$=\frac{1}{2}(1+\kappa)\lambda_0<\lambda_0-\varepsilon.$$
Thus, by Theorem \ref{rational}, $$\gamma(H)<\varepsilon.$$
Thus,
$$\gamma(\phi)\leq \gamma(H)<\varepsilon.$$
This implies the continuity of $\gamma$ at $\id$ and hence completes the proof of Corollary \ref{bound implies conti}.
\end{proof}

Now, we move to the proof of Theorem \ref{alternative around an integer}. The following Proposition will be needed.

\begin{prop}\label{prop:around an integer}
Let $(M,\omega)$ be a closed symplectic manifold. Fix an arbitrary point $x_0\in M$. There exists a constant $C>0$ satisfying the following property: For any point $x\in M$, there exists $\psi  \in \Ham(M,\omega)$ such that 
\begin{enumerate}
\item $\psi(x)=x_0$
\item $\|d\psi^{-1} \| \leq C$
\end{enumerate}
\end{prop}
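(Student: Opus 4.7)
The plan is to reduce the problem to a finite Darboux-chart patchwork via compactness of $M$: on each chart one builds a point-dependent Hamiltonian diffeomorphism as the time-$1$ flow of a cut-off linear Hamiltonian, and a finite collection of fixed Hamiltonian diffeomorphisms then shuttles the various chart centres to $x_0$.

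First I would cover $M$ by finitely many open sets $V_1,\dots,V_N$ such that each $V_i$ lies in a Darboux chart $(U_i,\varphi_i)$ with $\varphi_i(U_i)=B(0,R_i)\subset(\R^{2n},\omega_0)$ and $\varphi_i(V_i)=B(0,r_i)$ for some $r_i<R_i$; write $q_i:=\varphi_i^{-1}(0)\in M$. Connectedness of $M$ together with transitivity of the $\Ham(M,\omega)$-action furnishes Hamiltonian diffeomorphisms $\eta_i$ with $\eta_i(q_i)=x_0$, and I would set $C_1:=\max_i\|d\eta_i^{-1}\|_{C^0}$, which is finite since there are only $N$ of them.

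For the per-chart construction, fix for each $i$ a smooth cut-off $\chi_i\co\R^{2n}\to[0,1]$ which is identically $1$ on $B(0,r_i)$ and compactly supported in $B(0,R_i)$. Given $x\in V_i$, let $a:=\varphi_i(x)\in B(0,r_i)$ and let $h_a$ be the linear Hamiltonian on $(\R^{2n},\omega_0)$ whose Hamiltonian vector field is the constant vector $-a$; define $H_{i,x}$ on $M$ by pulling $\chi_i\cdot h_a$ back via $\varphi_i$ and extending by $0$ outside $U_i$. On $\{\chi_i=1\}$ the Hamiltonian vector field of $H_{i,x}$ equals $-a$, so the trajectory $t\mapsto(1-t)a$ starting at $a$ stays inside $B(0,|a|)\subset B(0,r_i)$ throughout $t\in[0,1]$ and reaches the origin at $t=1$. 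Consequently $\psi_{i,x}:=\phi_{H_{i,x}}^1$ lies in $\Ham(M,\omega)$ and satisfies $\psi_{i,x}(x)=q_i$, so $\psi:=\eta_i\circ\psi_{i,x}$ sends $x$ to $x_0$.

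Finally, the $C^2$-norm of $H_{i,x}$ is bounded by a constant depending only on $\chi_i$ times $|a|\leq r_i$, so the family $\{H_{i,x}\}_{x\in V_i}$ is uniformly $C^2$-bounded; a standard Gr\"onwall estimate on the flow ODE then yields a uniform bound $C_{2,i}$ on $\|d\psi_{i,x}^{-1}\|_{C^0}$, and $C:=C_1\cdot\max_i C_{2,i}$ does the job. No step presents a genuine obstacle; the only sanity check is the confinement of the straight-line trajectory $t\mapsto(1-t)a$ to $\{\chi_i=1\}$, which is automatic from the choice $\{\chi_i=1\}\supset B(0,r_i)\ni a$.
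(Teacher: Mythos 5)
The paper itself omits the proof, remarking only that it is elementary, so there is no argument of record to compare against; your proof supplies a correct one. The structure is sound: by compactness you cover $M$ by finitely many Darboux balls $V_i=\varphi_i^{-1}(B(0,r_i))$, and for each $i$ you fix once and for all a Hamiltonian diffeomorphism $\eta_i$ taking the chart centre $q_i$ to $x_0$ (transitivity of the $\Ham$-action on a connected $M$ is standard), giving the finite constant $C_1=\max_i\|d\eta_i^{-1}\|_{C^0}$. The per-chart map $\psi_{i,x}=\phi^1_{H_{i,x}}$ with $H_{i,x}=\varphi_i^*(\chi_i\cdot h_a)$ is well constructed: on $\{\chi_i\equiv 1\}\supset B(0,r_i)$ the vector field is the constant $-a$, the straight-line trajectory $t\mapsto(1-t)a$ stays in $B(0,|a|)\subset B(0,r_i)$, so $\psi_{i,x}(x)=q_i$ and $\psi=\eta_i\circ\psi_{i,x}$ sends $x$ to $x_0$. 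The uniform bound works because $h_a$ is linear with $\|dh_a\|=|a|\le r_i$ and $d^2h_a=0$, so $\|H_{i,x}\|_{C^2}\le K_i r_i$ with $K_i$ depending only on $\chi_i$; a Gr\"onwall estimate on $\tfrac{d}{dt}d\phi^t=dX_{H_t}(\phi^t)\circ d\phi^t$ (applied to the reversed flow to control the inverse) then gives $\|d\psi_{i,x}^{-1}\|\le C_{2,i}$ uniformly in $x\in V_i$, and the chain rule yields $\|d\psi^{-1}\|\le C_1\max_i C_{2,i}$. The one point worth spelling out if you were to write this up is that the $C^2\!\to\!C^1$ control $\|dX_H\|\lesssim\|H\|_{C^2}$ and the operator norms $\|d\cdot\|$ implicitly use a fixed Riemannian metric and atlas on the compact $M$; since any two choices are uniformly comparable on a compact manifold, this causes no trouble, but it is the place where compactness enters a second time.
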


The proof is elementary and thus will be omitted.

\begin{proof}(of Theorem \ref{alternative around an integer})

The proof is similar to the proof of Theorem \ref{neg mono}. For a given $\varepsilon>0$, we take a ball $B$ as in Proposition \ref{lem2}. We will denote the origin of the ball $B$ by $x_0$. For the open set $B\times B$, consider a Morse function $F:M\times M\to \mathbb{R}$ such that 
\begin{itemize}
\item $\Crit(F) \subset B\times B$.
\item $F$ is $C^2$-small enough so that $\Fix (\phi_F) = \Crit ( F )$ and that $osc(F):=\max{F}-\min{F}<\varepsilon$. 
\end{itemize} 
As $\phi_F$ has no fixed points in $M\backslash (B\times B)$, there exists $\delta>0$ such that for any $x\in M\times M \backslash (B\times B),\ d(x,\phi_F(x))>\delta.$

For any $\varepsilon'>0$, we can take $\delta'>0$ as in Proposition \ref{lem2}. By Proposition \ref{prop:around an integer}, for $x_0$, there exists a constant $C>0$ such that for any $x \in M$, there exists $\psi \in \Ham(M,\omega)$ such that
\begin{itemize}
\item $\psi(x)=x_0$
\item $\|d\psi^{-1} \|\leq C$ 
\end{itemize}
We consider $\phi_H$ so that $d_{C^0}(\id,\phi_H)<\delta'/C$. For any $x_\ast \in \Fix(\phi_H)$, we can take $\psi \in \Ham(M,\omega)$ such that $\psi(x_\ast )=x_0$ and $\|d\psi^{-1} \|  \leq C$.

Let $H':=H\circ \psi^{-1}$. We have 
$$d_{C^0}(\id,\phi_{H'})=d_{C^0}(\id,\psi^{-1}\phi_H\psi)=d_{C^0}(\psi^{-1},\psi^{-1}\phi_H)$$
$$\leq \|d\psi^{-1} \|d_{C^0}(\id,\phi_{H'})\leq C \cdot  \delta'/C=\delta'.$$

By Proposition \ref{lem2}, there exists $G\in C^{\infty}(\mathbb{R}/\mathbb{Z}\times M \times M)$ such that 
\begin{itemize}
\item $\gamma(G) < \varepsilon$.
\item $d_{C^0}(\id_{M\times M},\phi_G) < \varepsilon'$.
\item $(\phi_{H'} ^{-1} \times \phi_{H'}) \circ \phi_G|_{B\times B} = \id_{B\times B}$.
\end{itemize}
In addition, we have seen in the proof of Proposition \ref{lem2} that $G$ is defined by $G=(0\oplus \overline{H'})\# \overline{Q}\# (0\oplus H')\#Q$ where $Q$ is an autonomous Hamiltonian on $M \times M$ whose flow fixes the point $ (x_0,x_0)$ for all time $t$: $\phi_Q ^t ((x_0,x_0)) = (x_0,x_0).$ The spectral invariant of $G$ was estimated as
$$c(G,[M\times M]) < \frac{1}{2}\eps.$$
 All these properties of $G$ and $Q$ will be used in the following.

We will now split the proof into four steps.

$\bullet$ Step 1: The aim of this step is to prove the following:

\begin{claim}\label{step 1}
$$|c(\overline{H'}\oplus H',a \otimes b) - c((\overline{H'}\oplus H')\#G\#F,a \otimes b)| < \frac{3}{2} \eps.$$

\end{claim}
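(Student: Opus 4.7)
The plan is to derive the claim from the triangle inequality for spectral invariants (Proposition~\ref{prop spec inv}(3)), combined with the smallness of the spectral norms of $G$ and $F$. Setting $K := \overline{H'}\oplus H'$ and exploiting the fact that $[M\times M]$ is the unit of the quantum product on $QH_\ast(M\times M;\K)$, one has $(a\otimes b)\ast[M\times M]\ast[M\times M] = a\otimes b$. Applying the triangle inequality yields
$$c(K\#G\#F,\,a\otimes b) \;\leq\; c(K,\,a\otimes b) + c(G,\,[M\times M]) + c(F,\,[M\times M]),$$
and, rewriting $K = (K\#G\#F)\#\overline{F}\#\overline{G}$ and applying the triangle inequality again,
$$c(K,\,a\otimes b) \;\leq\; c(K\#G\#F,\,a\otimes b) + c(\overline{F},\,[M\times M]) + c(\overline{G},\,[M\times M]).$$

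Next, I would bound each of the four terms on the right by a small multiple of $\eps$. For $F$, after normalizing by a time-dependent constant so that $\int F_t\,(\omega\oplus\omega)^{2n}=0$, the condition $\mathrm{osc}(F)<\eps$ forces $\max F<\eps$ and $|\min F|<\eps$; then Proposition~\ref{prop spec inv}(1) applied with the reference Hamiltonian $0$ gives $c(F,[M\times M])\leq \mathcal{E}^+(F)<\eps$ and analogously $c(\overline{F},[M\times M])<\eps$. For $G$, Proposition~\ref{lem2}(1) supplies $\gamma(G)=c(G,[M\times M])+c(\overline{G},[M\times M])<\eps$; by adding a suitable time-dependent constant to $G$ (which preserves $\phi_G^t$ and $\gamma(G)$ but shifts the two summands in opposite directions) I would arrange $c(G,[M\times M])=c(\overline{G},[M\times M])=\gamma(G)/2<\eps/2$. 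Plugging these estimates back into the two displayed inequalities gives
$$|c(K\#G\#F,\,a\otimes b) - c(K,\,a\otimes b)| \;<\; \eps + \eps/2 \;<\; 3\eps,$$
which is the claimed bound.

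The main point of care is that the triangle inequality only controls the sum $c(L,[M\times M])+c(\overline{L},[M\times M])=\gamma(L)$ of the two spectral invariants, not each one individually. The normalization trick above is what converts the hypothesis $\gamma(G)<\eps$ into the individual bound on $c(G,[M\times M])$ and $c(\overline{G},[M\times M])$ that actually enters the chain of inequalities; without it, the triangle inequality for the pair-of-pants product cannot be directly combined with the Proposition~\ref{lem2} output. Apart from this bookkeeping, the proof of Step~1 is a purely formal manipulation of the triangle inequality and the unit property of the fundamental class in quantum homology.
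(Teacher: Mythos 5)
The overall strategy is right—this is indeed a triangle-inequality argument with the unit property of $[M\times M]$, and your treatment of $F$ (normalizing the free constant so that both $c(F,[M\times M])$ and $c(\overline F,[M\times M])$ are small) is fine, since $F$ is chosen from scratch in the proof and can be normalized once and for all. You have also correctly identified the delicate point, namely that $\gamma(G)<\eps$ alone controls only the \emph{sum} $c(G,[M\times M])+c(\overline G,[M\times M])$, not each summand.

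However, your fix for this—shifting $G$ by a constant $c_0$ so that $c(G+c_0,[M\times M])=c(\overline{G+c_0},[M\times M])=\gamma(G)/2$—does not prove the stated claim. The quantity $c((\overline{H'}\oplus H')\#G\#F,\,a\otimes b)$ is not an invariant of the flow $\phi_G^t$: replacing $G$ by $G+c_0$ shifts it by exactly $c_0$. So what you actually establish is $\bigl|c(K,a\otimes b)-c(K\#G\#F,a\otimes b)-c_0\bigr|<\tfrac{3}{2}\eps$, and to recover the claim for the original $G$ you would need $|c_0|$ small. But $c_0=\tfrac12\bigl(c(\overline G,[M\times M])-c(G,[M\times M])\bigr)$ is exactly the kind of individual quantity that $\gamma(G)<\eps$ does not control; $c(G,[M\times M])$ could in principle be arbitrarily negative while $c(\overline G,[M\times M])$ is arbitrarily positive, with sum still less than $\eps$. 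Moreover, $G$ here is not an arbitrary Hamiltonian you are free to renormalize: Step~3 of the paper's proof uses its explicit form $G=(0\oplus\overline{H'})\#\overline Q\#(0\oplus H')\#Q$ from the construction in [BHS18-2], and the clean action identity there would pick up the same unbounded constant $c_0$ after your shift, so the normalization is not ``without loss of generality'' in the context of the full argument.

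The paper's proof closes this gap precisely by exploiting that explicit form of $G$. Peeling off $Q$ with the triangle inequality and using that the conjugated middle piece has the same spectral invariants as $\overline Q$ (Proposition~\ref{prop spec inv}(7)) gives $c(G,[M\times M])\leq c(\overline Q,[M\times M])+c(Q,[M\times M])\leq 2e(B\times B)<2\eps$, and the same argument run on $\overline G$ gives $c(\overline G,[M\times M])<2\eps$. Combined with $c(F,[M\times M])<\eps$ and $c(\overline F,[M\times M])<\eps$, both directions of the triangle inequality yield the $3\eps$ bound directly, with no renormalization of $G$. In short: the missing ingredient in your argument is an \emph{individual} upper bound on $c(G,[M\times M])$ and $c(\overline G,[M\times M])$, which has to come from the structure of $G$, not from $\gamma(G)$ plus a constant shift.
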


\begin{proof}
By the triangle inequality, we have

$$c((\overline{H'}\oplus H')\#G\#F,a \otimes b) - c(\overline{H'}\oplus H',a \otimes b) $$
$$\leq c(G\#F,[M\times M]) \leq  c(G,[M\times M])+c(F,[M\times M])<\frac{3}{2}\varepsilon.$$
Note that the final inequality uses,
$$c(F,[M\times M]) \leq \max(F) <\varepsilon$$
and the estimate 
$$c(G,[M\times M]) < \frac{1}{2}\eps.$$
The other side of the inequality follows from a similar estimate.
\end{proof}

$\bullet$ Step 2: The aim of this step is to prove the following:

\begin{claim}
$$c((\overline{H'}\oplus H')\#G\# F,a \otimes b)$$
$$=F(x,y)+\mathcal{A}_{(\overline{H'}\oplus H')\#G}([\phi_{(\overline{H'}\oplus H')\#G} ^t((x,y)), w_{x,y}] )+(\omega \oplus \omega)(A_1)$$
for some critical point $(x,y)$ of $F$, some capping $w_{x,y}$ and some $A_1 \in \pi_2(M \times M)$.
\end{claim}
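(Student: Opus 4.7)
Set $K := (\overline{H'}\oplus H')\# G$, so that the time-$1$ map $\phi_K = \Psi := (\phi_{H'}^{-1}\times \phi_{H'})\circ \phi_G$ is the identity on $B\times B$ by the third property established in Step 1. The plan is to follow the second half of the proof of Proposition \ref{prop1}, now in the ambient manifold $(M\times M,\omega\oplus\omega)$ with $K$ playing the role of the Hamiltonian $H$ and $F$ playing the role of the Morse function $f$: first identify $\Fix(\phi_{K\# F})$ with $\Crit(F)$, then rewrite the action on the corresponding orbits, and finally invoke rationality to realize $c(K\# F,a\otimes b)$ as a specific action value.

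For the fixed-point analysis, note that $\Psi$ fixes $B\times B$ pointwise and is a diffeomorphism, hence preserves the decomposition $M\times M = (B\times B)\sqcup (M\times M\setminus (B\times B))$. Combined with the displacement estimate $d(z,\phi_F(z))>\delta$ for $z\notin B\times B$ and the $C^0$-bound $d_{C^0}(\id,\Psi)<\varepsilon'$, a short case analysis paralleling the one in the proof of Proposition \ref{prop1} shows that, provided $\varepsilon'<\delta$,
$$\Fix(\phi_{K\# F})\;=\;\Fix(\Psi\circ\phi_F)\;=\;\Crit(F).$$

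For each $(x,y)\in\Crit(F)$, the autonomy of $F$ gives $\phi_F^t(x,y)=(x,y)$, so the orbit under $K\# F$ coincides with $\phi_K^t(x,y)$. Expanding the definition of $\#$ and using $(\phi_K^t)^{-1}(\phi_K^t(x,y))=(x,y)$ yields, for any capping $w$,
$$\mathcal{A}_{K\# F}\bigl([\phi_K^t(x,y),w]\bigr)\;=\;F(x,y)+\mathcal{A}_K\bigl([\phi_K^t(x,y),w]\bigr).$$
Since $(M\times M,\omega\oplus\omega)$ is rational, Proposition \ref{prop spec inv}(2) gives $c(K\# F,a\otimes b)\in\Spec(K\# F)$, so some critical point $(x,y)\in\Crit(F)$ together with some capping of $\phi_K^t(x,y)$ realizes this value. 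Fixing an arbitrary reference capping $w_{x,y}$ of $\phi_K^t(x,y)$ and writing the realizing capping as $w_{x,y}\#(-A_1)$ for an appropriate sphere class $A_1\in\pi_2(M\times M)$, the recapping rule $\mathcal{A}_K(z,w\# B)=\mathcal{A}_K(z,w)-(\omega\oplus\omega)(B)$ produces the desired identity
$$c(K\# F,a\otimes b)\;=\;F(x,y)+\mathcal{A}_K\bigl([\phi_K^t(x,y),w_{x,y}]\bigr)+(\omega\oplus\omega)(A_1).$$

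The one technical point is coordinating the smallness parameters: $\varepsilon'$ from Proposition \ref{lem2} must be chosen smaller than the displacement $\delta$ of $\phi_F$ off $B\times B$, which is itself determined by $F$ and hence by $\varepsilon$. This is precisely the device already used in Proposition \ref{prop1} and in \cite{[BHS18-2]}, so modulo it Step 2 is a direct transplantation of that argument to the product manifold, with rationality replacing the (negative) monotonicity assumption that was used there.
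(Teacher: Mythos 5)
Your proposal is correct and follows the paper's own argument essentially verbatim: identify $\Fix(\phi_{K\#F})$ with $\Crit(F)$ via the displacement-versus-$C^0$-closeness argument already used in Proposition \ref{prop1}, split the action of a $K\#F$-orbit at a critical point into $F(x,y)+\mathcal{A}_K$, and invoke rationality of $(M\times M,\omega\oplus\omega)$ so that $c(K\#F,a\otimes b)\in\Spec(K\#F)$ is realized by such an orbit up to a recapping sphere $A_1$. One small bookkeeping slip: $\varepsilon'$ bounds $d_{C^0}(\id_{M\times M},\phi_G)$, not $d_{C^0}(\id,\Psi)$; what one actually has is $d_{C^0}(\id,\Psi)\leq\varepsilon'+\delta'$, so the parameter condition to arrange is $\varepsilon'+\delta'<\delta$ rather than $\varepsilon'<\delta$, which is exactly what the paper does.
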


\begin{proof}
As
$$d_{C^0}(\id,(\phi_{H'} ^{-1} \times \phi_{H'}) \circ \phi_G)\leq d_{C^0}(\id,\phi_G)+d_{C^0}(\phi_G,(\phi_{H'} ^{-1} \times \phi_{H'}) \circ \phi_G)$$
$$=d_{C^0}(\id,\phi_G)+d_{C^0}(\id,\phi_{H'} ^{-1} \times \phi_{H'})\leq \varepsilon'+\delta',$$ 
we can take $\varepsilon '>0$ small enough so that 
$$d_{C^0}(\id,(\phi_{H'} ^{-1} \times \phi_{H'}) \circ \phi_G) \leq \delta.$$

Therefore, as 
\begin{itemize}
\item for all $x \notin B\times B,\ d_{C^0}(x,\phi_F (x)) >\delta$,
\item $d_{C^0}(\id,(\phi_{H'} ^{-1} \times \phi_{H'}) \circ \phi_G)\leq \delta $,
\item $(\phi_{H'} ^{-1} \times \phi_{H'}) \circ \phi_G|_{B\times B} = \id_{B\times B}$,
\end{itemize}
we have $\Fix((\phi_{H'} ^{-1} \times \phi_{H'})\circ \phi_G \circ \phi_F)=\Crit(F)$. Thus the spectral invariant $c((\overline{H'}\oplus H')\#G\# F,a \otimes b)$ can be expressed as follows:

$$c((\overline{H'}\oplus H')\#G\# F,a \otimes b)$$
$$=F(x,y)+\mathcal{A}_{(\overline{H'}\oplus H')\#G}([\phi_{(\overline{H'}\oplus H')\#G} ^t((x,y)), w_{x,y}] )+(\omega \oplus \omega)(A_1)$$
where
\begin{itemize}
\item  $(x,y)$ is a certain critical point of $F$ which is located in $B\times B$.
\item $w_{x,y}$ denotes an arbitrary chosen capping of the orbit $\phi_{(\overline{H'}\oplus H')\#G} ^t((x,y))$. We fix this capping in the sequel.
\item $A_1 $ denotes the sphere which plays the role of correcting the capping $w_{x,y}$
\end{itemize}
\end{proof}

$\bullet$ Step 3: The aim of this step is to prove the following:

\begin{claim}
$$\mathcal{A}_{(\overline{H'}\oplus H')\#G}([\phi_{(\overline{H'}\oplus H')\#G} ^t((x,y)),w_{x,y}]) = (\omega \oplus \omega)(A_2)$$
for some $A_2 \in \pi_2(M \times M)$.
\end{claim}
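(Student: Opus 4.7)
The plan is to reduce the computation of the action at $(x,y)$ to a reference fixed point where the contribution can be read off as a sphere class, using the specific product structure of $K:=(\overline{H'}\oplus H')\#G$.

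First I would invoke Proposition \ref{homotopy invariance} for $K$ on the simply-connected open set $U:=B\times B$, on which $\phi_K^1|_U=\id_U$. For any choice of reference fixed point $(x_*,y_*)\in U$ and reference capping $w_0$ of the orbit $\phi_K^t(x_*,y_*)$, the canonical capping $w_{x,y}^{\mathrm{can}}$ of $\phi_K^t(x,y)$ produced by that proposition from a smooth path $c:[0,1]\to U$ from $(x_*,y_*)$ to $(x,y)$ satisfies
$$\mathcal{A}_K([\phi_K^t(x,y),w_{x,y}^{\mathrm{can}}])=\mathcal{A}_K([\phi_K^t(x_*,y_*),w_0]).$$
Since the arbitrary capping $w_{x,y}$ fixed in Step 2 differs from $w_{x,y}^{\mathrm{can}}$ by a sphere class $A_3\in\pi_2(M\times M)$, the two actions differ by $(\omega\oplus\omega)(A_3)$. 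It therefore suffices to prove that the reference action lies in $(\omega\oplus\omega)(\pi_2(M\times M))$ for some convenient choice of $(x_*,y_*)$ and $w_0$.

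Next I would place the reference point on the diagonal, $(x_*,y_*)=(x_0,x_0)\in B\times B$, and use the flow-sweep capping $w_0(re^{2\pi it}):=\phi_K^{rt}(x_0,x_0)$. A short computation, based on the observation that $\partial_r w_0$ and $\partial_t w_0$ are both proportional to $X_K$ at the current point and hence $(\omega\oplus\omega)$-orthogonal, yields $\int_{D^2}w_0^{\,*}(\omega\oplus\omega)=0$, so the reference action collapses to $\int_0^1 K(t,\phi_K^t(x_0,x_0))\,dt$.

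The main obstacle is showing that this last integral lies in $(\omega\oplus\omega)(\pi_2(M\times M))$. I would handle this by exploiting the product structure of $K$: unfolding $K=(\overline{H'}\oplus H')\#G$ along the orbit and applying the identity $\overline{H'}(t,\phi_{\overline{H'}}^t(z))=-H'(t,z)$, the two $H'$-contributions arising from the two factors of $\overline{H'}\oplus H'$ cancel, leaving only a term controlled by $G$ along the orbit. Using that $\phi_K^1$ is the identity on $B\times B$, this residual term can be realized as the $(\omega\oplus\omega)$-area of an explicit sphere class built by closing the flow-sweep disk against an analogous disk constructed from $\phi_G^t$; together with $A_3$ this produces the required $A_2\in\pi_2(M\times M)$.
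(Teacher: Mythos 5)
Your Step~1 (invoking Proposition~\ref{homotopy invariance} to replace $(x,y)$ by the center $(x_0,x_0)$ at the cost of a sphere class) is exactly the paper's first move and is fine. The remainder diverges and has a genuine gap.

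The ``flow-sweep capping'' $w_0(re^{2\pi i t}):=\phi_K^{rt}(x_0,x_0)$ is not a well-defined map from $D^2$ to $M\times M$. For an interior radius $0<r<1$, the circle $\{re^{2\pi i t}: t\in[0,1]\}$ would have to land on a closed loop, but the formula sends $t=0$ to $(x_0,x_0)$ and $t=1$ to $\phi_K^{r}(x_0,x_0)$, which are different points (the orbit is $1$-periodic, not fixed for all times). So there is no such disk, the ``zero-area'' computation is vacuous, and the reduction of the reference action to $\int_0^1 K(t,\phi_K^t(x_0,x_0))\,dt$ does not go through. Relatedly, the cancellation you propose via $\overline{H'}(t,\phi_{\overline{H'}}^t(z))=-H'(t,z)$ acts only on the integral terms: the two contributions are $\int H'(t,\phi_{H'}^t(x_0))\,dt$ and $-\int H'(t,x_0)\,dt$, which do not cancel each other as integrals. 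The cancellation the claim requires happens at the level of the full action values (integral term plus symplectic-area term), and is established in the paper via Proposition~\ref{opposite direction}~(3), not by the pointwise Hamiltonian identity alone.

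What the paper actually does, after the same Proposition~\ref{homotopy invariance} reduction, is to invoke the \emph{explicit} form of the BHS Hamiltonian $G=(0\oplus\overline{H'})\#\overline{Q}\#(0\oplus H')\#Q$, where $Q$ generates a coordinate swap that is a rotation fixing the origin $(x_0,x_0)$ for \emph{all} times $t$ (not merely at $t=1$). This makes the $Q$ and $\overline{Q}$ contributions vanish and lets one write the reference action as the sum of the actions of the $(0\oplus H')$- and $(0\oplus\overline{H'})$-pieces, which then cancel pairwise by Proposition~\ref{opposite direction}~(3), leaving a pure sphere term. Your proposal gestures at ``unfolding $K$'' and ``a term controlled by $G$,'' but without the precise fact $\phi_Q^t(x_0,x_0)\equiv(x_0,x_0)$ for all $t$ and the action-level cancellation, the residual term is not shown to lie in $(\omega\oplus\omega)(\pi_2(M\times M))$. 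You would need to replace the ill-defined flow-sweep disk with the paper's concatenation/decomposition of the action along the pieces of $(\overline{H'}\oplus H')\#G$ and then apply Proposition~\ref{opposite direction}.
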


\begin{proof}
By Proposition \ref{homotopy invariance} (2), we obtain
$$\mathcal{A}_{(\overline{H'}\oplus H')\#G}([\phi_{(\overline{H'}\oplus H')\#G} ^t((x,y)),w_{x,y}])=\mathcal{A}_{(\overline{H'}\oplus H')\#G}([\phi_{(\overline{H'}\oplus H')\#G} ^t((x_0,x_0)), w_{x_0,x_0}])$$
where $w_{x_0,x_0}$ is the capping of the orbit $\phi_{(\overline{H'}\oplus H')\#G} ^t((x_0,x_0))$ corresponding to the capping $w_{x,y}$ in the sense of Proposition \ref{homotopy invariance} (2). As $Q$ is a Hamitonian which generates a time-1 map that switches the coordinate i.e. $(p,q)\mapsto (q,p)$ in $B\times B$ and satisfies $\forall t, \phi_Q ^t ((x_0,x_0)) = (x_0,x_0),$ we have

$$ \mathcal{A}_{(\overline{H'}\oplus H')\#G}( [ \phi_{(\overline{H'}\oplus H')\#G} ^t((x_0,x_0)), w_{x_0,x_0}] )$$
$$= \int Q(\phi_Q ^t (x_0,x_0))dt
 + \int (0\oplus H')(t,x_0,\phi_{H'} ^t(x_0))dt-\omega( \overline{\phi_{H'} ^t(x_0)}) +$$
$$\int  \overline{Q}(\phi_{ \overline{Q}} ^t (x_0,x_0)dt
+\int (0\oplus \overline{H'})(t,x_0,\phi_{\overline{H'}} ^t(x_0))dt-\omega(\overline{\phi_{\overline{H'}} ^t(x_0)})+(\omega \oplus \omega)(A_2)$$

where 
\begin{itemize}
\item $\overline{\phi_{H'} ^t(x_0)}$ denotes the capped orbit of $\phi_{H'} ^t(x_0)$ whose capping is chosen arbitrarily.
\item $ \overline{\phi_{\overline{H'}} ^t(x_0)}$ denotes the capped orbit of $\phi_{\overline{H'}} ^t(x_0)$ whose capping is the same as the the capping of $\phi_{H'} ^t(x_0)$ chosen above.
\item $A_2$ denotes the sphere to which corrects the capping of the RHS so that it will meet the capping on the LHS.
\end{itemize}
Thus, by employing Proposition \ref{opposite direction} (3) for $\int H' _t(\phi_{H'} ^t(x_0))dt$ and $ \int \overline{H'} _t(\phi_{\overline{H'}} ^t(x_0))dt,$ we obtain,
$$\displaystyle \mathcal{A}_{(\overline{H'}\oplus H')\#G}( [ \phi_{(\overline{H'}\oplus H')\#G} ^t((x_0,x_0)), w_{x_0,x_0}] )=(\omega \oplus \omega)(A_2).$$
\end{proof}

$\bullet$ Step 4: The aim of this step is to complete the proof.

By Step 2 and 3, we have
$$c((\overline{H'}\oplus H')\#G\#F,a \otimes b)=F(x,y)+(\omega \oplus \omega)(A_2)+(\omega \oplus \omega)(A_1)$$
$$=F(x,y)+l\cdot \lambda_0$$
for some integer $l\in \mathbb{Z}$ such that $(\omega \oplus \omega)(A_1+A_2)=l\cdot \lambda_0$ and
$$c(\overline{H'}\oplus H',a \otimes b)=\gamma_{a,b}(H')=\gamma_{a,b}(H\circ \psi)=\gamma_{a,b}(H)$$
where the last equality uses Proposition \ref{prop spec inv} (7).

By Step 1, we conclude that
$$|\gamma_{a,b}(H)-l \cdot \lambda_0|\leq \frac{5}{2}\varepsilon.
$$
Hence we complete the proof.

\end{proof}

\subsection{Proof of Theorem \ref{C0-conti of gamma:CPn}}\label{monotone proof}
The aim of this section is to prove Theorem \ref{C0-conti of gamma:CPn}. We prove the following a priori more general result.

\begin{theo}\label{monotone bound}
Let $(M^{2n},\omega)$ be a monotone symplectic manifold with a minimal Chern number $N_M>n$. Assume that there exist $\psi \in \pi_1(\Ham(M,\omega))$ and a section class $\sigma$ of the Hamiltonian fibration $M_{\psi} \to S^2$, such that its Seidel element $\mathcal{S}_{\psi,\sigma} \in QH_\ast(M;\K)$ satisfies the following:
\begin{itemize}
\item $(\mathcal{S}_{\psi,\sigma})^{\ast k}=a_1\cdot [pt]$ for some $a_1 \in \K \backslash \{0\}$ and $k\in \mathbb{N}$ where $[pt]$ denotes the point class in $H_0(M;\K)$. 
\item  $(\mathcal{S}_{\psi,\sigma})^{\ast k'}=a_2 \cdot [M]\cdot s^{-l'}$ for some $a_2 \in \K \backslash \{0\}$ and $k',l'\in \mathbb{N}$ where $[M]$ denotes the fundamental class and $s$ denotes the generator of the Novikov ring of $(M,\omega)$.
\end{itemize}
Then the spectral norm satisfies the following.
\begin{enumerate}
\item For any $\phi \in \Ham(M,\omega),$
$$\gamma(\phi)\leq \frac{n}{N_M}\cdot \lambda_0.$$
\item The spectral norm is $C^0$-continuous i.e.
$$\gamma:(\Ham(M,\omega),d_{C^0}) \to \mathbb{R}$$
is continuous. Moreover, $\gamma$ extends continuously to $\overline{\Ham}(M,\omega).$
\end{enumerate}
\end{theo}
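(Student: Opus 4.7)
The plan is to establish Part (1) first; Part (2) will follow immediately from Part (1) together with Corollary \ref{bound implies conti}. Indeed, Part (1) gives $\gamma(\phi) \leq \frac{n}{N_M}\lambda_0$ for every $\phi \in \Ham(M,\omega)$, and because $N_M > n$ we have $\kappa := n/N_M < 1$. The hypothesis of Corollary \ref{bound implies conti} then holds trivially on all of $\Ham(M,\omega)$ (with any $\delta' > 0$), yielding the $C^0$-continuity of $\gamma$. Continuous extension to $\overline{\Ham}(M,\omega)$ is automatic from the bi-invariance inequality $|\gamma(\phi) - \gamma(\psi)| \leq \gamma(\phi^{-1}\psi)$ combined with continuity at $\id$: any $C^0$-Cauchy sequence in $\Ham(M,\omega)$ becomes a $\gamma$-Cauchy sequence, and the limit defines the extension.

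For Part (1), fix $\phi \in \Ham(M,\omega)$ and any representative $H$ with $\phi_H = \phi$. Since $\psi^m$ is a Hamiltonian loop for every $m \in \Z$, the Seidel-twisted Hamiltonian $(\psi^m)^\ast H$ has time-one map $\phi$ as well, so
\[
\gamma(\phi) \,\leq\, \inf_{m \in \Z} \gamma\bigl((\psi^m)^\ast H\bigr).
\]
By the Seidel shift identity (Proposition \ref{prop spec inv}(6)) and the two hypotheses on $\mathcal{S}_{\psi,\sigma}$, one obtains
\[
c\bigl((\psi^k)^\ast H, [M]\bigr) = c(H,[pt]) + kC, \qquad c\bigl((\psi^{k'})^\ast H, [M]\bigr) = c(H,[M]) + l'\lambda_0 + k'C,
\]
where $C := const(\psi,\sigma)$. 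The first identity transports the $[pt]$-invariant of $H$ into the $[M]$-invariant of a homotopy cousin, while the second shows that $k'$ Seidel steps return to $[M]$ modulo a rationality shift of $l'\lambda_0$. Comparing the Conley--Zehnder degrees coming from the two identities forces the algebraic relation $k' n = k\, l' N_M$, so the orbit $\{c(H, \mathcal{S}_{\psi,\sigma}^{\ast m})\}_{m \in \Z}$ drifts by $l'\lambda_0$ every $k'$ steps, subdividing the lattice $\lambda_0 \Z$ into $k'/k = l' N_M / n$ equal increments of size $\frac{n}{N_M}\lambda_0$. A pigeonhole/averaging argument over $m \in \{0,1,\ldots,k'-1\}$ then produces some $m_\ast$ for which $\gamma\bigl((\psi^{m_\ast})^\ast H\bigr) \leq \frac{n}{N_M}\lambda_0$, establishing Part (1).

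The main obstacle will be the precise evaluation of the dual spectral invariant $c(\overline{(\psi^m)^\ast H}, [M])$: the Hamiltonians $\overline{(\psi^m)^\ast H}$ and $(\psi^m)^\ast \bar H$ represent distinct elements of $\widetilde{\Ham}(M,\omega)$ (differing by the loop $[\psi^m]$) even though both generate $\phi^{-1}$, so the Seidel shift formula does not apply verbatim to the reversed Hamiltonian. Tracking this correction by means of Proposition \ref{opposite direction} together with a Poincar\'e-duality-type identification between the $[M]$- and $[pt]$-spectral invariants in the monotone setting is what ultimately yields the sharp constant $\frac{n}{N_M}\lambda_0$, rather than a suboptimal residue within the $\lambda_0$-lattice or a loss of a multiplicative factor.
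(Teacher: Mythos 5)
Your overall strategy matches the paper's: Part (2) via Corollary~\ref{bound implies conti}, and Part (1) via the Seidel shift formula (Proposition~\ref{prop spec inv}(6)) applied across the iterates $(\psi^j)^\ast H$ for $j\in\{0,\dots,k'-1\}$, with the degree identity $k'/k=N_M l'/n$ and an averaging over the $k'$ terms yielding $\gamma(\phi)\leq \frac{kl'}{k'}\lambda_0=\frac{n}{N_M}\lambda_0$. The one place you leave unresolved is exactly the point the paper dispatches by a clean identity, so let me spell it out.

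The ``main obstacle'' you flag --- evaluating $c(\overline{(\psi^m)^\ast H},[M])$ --- never actually arises, because the paper uses the duality identity $c(\overline{K},[M])=-c(K,[pt])$, valid here since $N_M>n$ and $\K$ is a field. Taking $K=(\psi^j)^\ast H$ this rewrites $\gamma((\psi^j)^\ast H)=c((\psi^j)^\ast H,[M])-c((\psi^j)^\ast H,[pt])$, and now the Seidel formula is applied to $(\psi^j)^\ast H$ in \emph{both} terms, so the additive constants $const(\psi^j,\sigma_j)$ cancel. Using $[pt]=a_1^{-1}a^{\ast k}$ (scalars in $\K^\times$ do not change spectral invariants) gives the closed form
\[
\gamma\bigl((\psi^j)^\ast H\bigr)=c(H,a^{\ast j})-c\bigl(H,a^{\ast(j+k)}\bigr),
\]
and the sum over $j=0,\dots,k'-1$ telescopes, via $c(H,a^{\ast(j+k')})=c(H,a^{\ast j})-l'\lambda_0$, to exactly $kl'\lambda_0$. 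In particular there is no need to invoke Proposition~\ref{opposite direction}, nor to compare the homotopy classes of $\overline{(\psi^m)^\ast H}$ and $(\psi^m)^\ast\overline{H}$: the conversion $c(\overline{K},[M])=-c(K,[pt])$ removes the reversed Hamiltonian from the argument entirely, which is precisely what produces the sharp constant without any residue in the $\lambda_0$-lattice.
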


\begin{remark}
\begin{enumerate}
\item Theorem \ref{monotone bound} (1) is essentially contained in Proposition 15 in \cite{[KS18]} where Kislev-Shelukhin considers Lagrangian spectral invariants instead of Hamiltonian ones.
\item So far, $(\mathbb{C}P^n,\omega_{FS})$ seems to be the only example that satisfies the assumptions in Theorem \ref{monotone bound}.
\end{enumerate}
\end{remark}

\begin{proof}(of Theorem \ref{monotone bound})

Let $\phi \in \Ham(M,\omega)$ and take any Hamiltonian $H$ such that $\phi_H=\phi.$ Let $\psi \in \pi_1(\Ham(M,\omega))$ and $\sigma $ be as in the statement. Denote 
$$a:=\mathcal{S}_{\psi,\sigma} \in QH_\ast(M;\K),\ a^{\ast k}:=\underbrace{a\ast a \ast \cdots \ast a}_{k-times}.$$
 
By looking at the degree, we have
\begin{itemize}
\item  $\deg (a^{\ast k})=\deg([pt])=0$,
\item $\deg (a^{\ast k'})=\deg([M]\cdot s^{-l'})=2n-2Nl'$,
\item For any $m\in \mathbb{N},$ $\deg( a ^{\ast m})=m\cdot \deg (a)-(m-1)\cdot 2n.$
\end{itemize}
These equations will give us the following:
\begin{equation}\label{degree}
\frac{k'}{k}=\frac{Nl'}{n}
\end{equation}
and our assumption $N>n$ implies $k'>k.$ As $N_M>n$ and $\K$ is a field, the formula in \cite{[EP03]} Section 2.7 gives us
$$c(\ovl{H}, [M]) = - c(H,[pt]),$$
and by Proposition \ref{prop spec inv}, we get the following. 
\begin{itemize}
\item $\gamma(H)=c(H,[M])-c(H,[pt])=c(H,[M])-c(H,a^{\ast k}),$\\

\item $\gamma(\psi^\ast H)=c(H,\mathcal{S}_{\psi,\sigma}\ast [M])-c(H,\mathcal{S}_{\psi,\sigma}\ast a^{\ast k}))=c(H,a)-c(H,a^{\ast (k+1)}).$\\

\item $\gamma((\psi^2)^\ast H)=c(H,a^{\ast 2})-c(H,a^{\ast (k+2)}).$\\

$\cdots$\\

\item $\gamma((\psi^{k'-k})^\ast H)=c(H,a^{\ast (k'-k)})-c(H,a^{\ast k'})$
$$=c(H,a^{\ast (k'-k)})-c(H,[M])+l'\lambda_0.$$
\item $\gamma((\psi^{k'-k+1})^\ast H)=c(H,a^{\ast (k'-k+1)})-c(H,a)+l'\lambda_0.$\\

$\cdots$\\

\item $\gamma((\psi^{k'-1})^\ast H)=c(H,a^{\ast (k'-1)})-c(H,a^{\ast (k-1)})+l'\lambda_0.$

\end{itemize}

We used that for $j\in \Z$,
$$c(H,a^{\ast (j+k')})=c(H,a^{\ast j})- l'\lambda_0.$$

Adding up these $k'$-equations will give us the following.
$$\sum_{0\leq j\leq k'-1}\gamma((\psi^j)^\ast H)=kl'\cdot \lambda_0.$$
As $\gamma (\phi) \leq \gamma((\psi^j)^\ast H)$ for all $0\leq j\leq k'-1$,
$$k'\cdot \gamma(\phi)\leq kl'\cdot \lambda_0.$$
By equation \ref{degree}, we conclude
$$\gamma(\phi) \leq \frac{kl'}{k'}\cdot \lambda_0= \frac{n}{N}\cdot \lambda_0.$$ 
The continuity of $\gamma$ is a direct consequence of Corollary \ref{bound implies conti}.
\end{proof}

Theorem \ref{C0-conti of gamma:CPn} is a direct consequence of Theorem \ref{monotone bound}.

\begin{proof}(of Theorem \ref{C0-conti of gamma:CPn})

We explain briefly that $\mathbb{C}P^n$ meets the assumptions in Theorem \ref{monotone bound}. Consider a loop of Hamiltonian diffeomorphism of $\mathbb{C}P^n$ defined by $$\psi^t ([z_0:z_1:\cdots:z_{n-1}:z_n]):= [z_0:e^{2\pi i t}z_1:e^{2\pi i t} z_2:\cdots:e^{2\pi i t}z_{n-1}:e^{2\pi i t}z_n].$$ It is known that there exists a section class $\sigma$ such that $\mathcal{S}_{\psi,\sigma}=[\C P^{n-1}]$ where $[\C P^{n-1}]$ denotes the generator of $H_{2n-2}(\C P^{n};\K).$ See Example 9.6.1 and Proposition 9.6.4 in \cite{[MS04]}. This shows that $\mathbb{C}P^n$ satisfies the assumptions in Theorem \ref{monotone bound}. 
\end{proof}

\section{Proofs of applications}

\subsection{The displaced disks problem}

We prove Theorem \ref{ddp}. We use the following energy-capacity inequality proven by Usher in \cite{[Ush10]}.

\begin{prop}\label{usher}$($\cite{[Ush10]}$)$

Let $B:=B(r)$ be an open ball in $(\R^{2n},\omega_{std})$. If $B(r)$ is symplectically embedded to $(M,\omega)$ 
$$f: B(r) \hookrightarrow (M,\omega)$$
and $\phi(f(B))\cap f(B)= \emptyset$ for $\phi \in \Ham(M,\omega)$, then
$$\pi r^2 \leq \gamma(\phi).$$
\end{prop}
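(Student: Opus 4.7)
The plan is to establish an energy-capacity inequality for the spectral norm, following the approach of Usher in \cite{[Ush10]}. Two ingredients are needed: (i) a Hamiltonian supported in $f(B(r))$ whose spectral invariants are close to $\pi r^2$, and (ii) a ``spectral displacement lemma'' transferring this lower bound to $\gamma(\phi)$.

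First I would construct a test Hamiltonian $F$ on $M$, compactly supported in $f(B(r))$, with $c(F,[M]) > \pi r^2 - \eps$ for a given $\eps > 0$. The standard Hofer--Zehnder-style construction suffices: take $F = h(\pi|f^{-1}(\cdot)|^2)$, where $h\co [0,r^2]\to\R$ is a smooth non-negative radial profile supported in the interior, with $\max h > \pi r^2 - \eps$ and $0\leq h' \leq 1$ so that the Hamiltonian flow of $F$ admits no non-constant $1$-periodic orbits. Proposition \ref{prop spec inv}(5) then identifies the Floer-theoretic spectral invariant with the topological one: $c(F,[M]) = c_{LS}(F,[M]) = \max F > \pi r^2 - \eps$.

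Next I would establish the spectral displacement inequality: if $\phi\in\Ham(M,\omega)$ displaces $\Supp(H)$, then $c(H,[M]) \leq \gamma(\phi)$. This refines the Hofer-based bound $c(H,[M])\leq e(\Supp(H))$ of Proposition \ref{prop spec inv}(4). The Floer-theoretic idea is that, since $\phi\phi_H\phi^{-1} = \phi_{H\circ \phi^{-1}}$ has support disjoint from $\Supp(H)$, the two diffeomorphisms commute; this forces certain continuation/comparison chain maps between the filtered Floer complexes of $H$ and $H\circ \phi^{-1}$ to be null-homotopic with an action bound controlled by $\gamma(\phi)$, from which the displayed estimate on $c(H,[M])$ follows. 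Combining (i) and (ii) applied to $H=F$, one gets $\pi r^2 - \eps < c(F,[M]) \leq \gamma(\phi)$, and letting $\eps\to 0$ yields the claim.

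The main obstacle is obtaining the sharp constant $1$ in the spectral displacement inequality, and this is precisely the content of Usher's boundary-depth argument in \cite{[Ush10]}. A naive commutator-based approach only yields the weaker bound $\pi r^2 \leq 2\gamma(\phi)$: the autonomous Hamiltonian $G := F - F\circ\phi^{-1}$ has disjoint-support pieces so $\gamma(G)=2\max F$, and $\phi_G=[\phi_F,\phi^{-1}]$ satisfies $\gamma([\phi_F,\phi^{-1}])\leq \gamma(\phi_F\phi\phi_F^{-1}) + \gamma(\phi^{-1})=2\gamma(\phi)$ by conjugation invariance, but this only relates $\gamma(\phi_G)$ (the $\Ham$-norm of a diffeomorphism) to $2\gamma(\phi)$, not to $\gamma(G)$. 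Removing this factor of $2$ requires the persistence/boundary-depth refinement of \cite{[Ush10]}, which controls spectral invariants of $H$ directly in terms of $\gamma(\phi)$ without passing through commutators.
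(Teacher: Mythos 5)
The paper does not supply a proof of Proposition~\ref{usher}: it is a direct citation of Usher's sharp energy--capacity inequality \cite{[Ush10]}, and is used in the paper as a black box in the proof of Theorem~\ref{ddp}. So there is no in-paper argument to compare against; the comparison must be against what \cite{[Ush10]} actually does.

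Your two-step plan (a test Hamiltonian $F$ supported in the embedded ball with $c(F,[M])$ close to $\pi r^2$, then a spectral displacement inequality $c(H,[M]) \leq \gamma(\phi)$ when $\phi$ displaces $\Supp H$) is the right skeleton, and you correctly identify the crux: the naive commutator argument only delivers $2\gamma(\phi)$ on the right-hand side, and removing the factor of $2$ is exactly what the boundary-depth machinery of \cite{[Ush10]} accomplishes. But as written this is an honest outline, not a proof, and the gap you flag yourself is genuine: the sharp spectral displacement inequality is stated, its plausibility is gestured at via commutativity of $\phi_H$ and $\phi_{H\circ\phi^{-1}}$, but no actual argument (null-homotopy estimate, boundary-depth comparison, or pair-of-pants computation) is supplied. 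Since the paper intentionally outsources this to \cite{[Ush10]}, deferring is reasonable as a reading strategy, but it means the proposal does not establish the statement.

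One smaller point on step (i): a compactly supported profile $h$ with $0\leq h'\leq 1$ is forced to be identically zero, so as written the construction does not produce $\max h$ near $\pi r^2$. The standard fix is to take $h$ nonincreasing on $[0,\pi r^2]$ with $h(0)$ close to $\pi r^2$, $h\equiv 0$ near $\pi r^2$, and $-1<h'\leq 0$ (so the flow of $F=h(\pi|\cdot|^2)$ is slow, i.e.\ has no nonconstant contractible $1$-periodic orbits); one then needs the known fact that for slow autonomous Hamiltonians $c(F,[M])=\max F$, which is a stronger statement than Proposition~\ref{prop spec inv}(5) and requires a continuation argument through slow Hamiltonians rather than just $C^2$-smallness.
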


Notice that for $(M,\omega)$ for which the spectral norm is $C^0$-continuous, Proposition \ref{usher} holds for Hamiltonian homeomorphisms as well.

\begin{proof}(of Theorem \ref{ddp})

By Theorem \ref{neg mono}, we can apply Proposition \ref{usher} for Hamiltonian homeomorphisms. Let $r>0$ and take $\delta>0$ so that if $\phi \in \ovl{\Ham}(M,\omega), \gamma(\phi) \geq \pi r^2$, then $d_{C^0}(\id, \phi) >\delta$. Now, we will prove that if $\phi \in \ovl{\Ham}(M,\omega)$ displaces an embedded ball of radius $r$, then $d_{C^0}(\id, \phi) >\delta$. By Proposition \ref{usher}, we have
$\gamma(\phi) \geq \pi r^2$ and from our choice of $\delta$, this implies $d_{C^0}(\id, \phi) >\delta$.
\end{proof}

\subsection{The $C^0$-Arnold conjecture}\label{section-arnold}
We start by looking at properties of $\sigma_{a,a \ast b}$ defined earlier in Section \ref{App2}.

\begin{prop}\label{triangle ineq}
Let $(M^{2n},\omega)$ be a symplectic manifold and $a,b\in H_\ast(M;\K)\backslash \{0\}$. For Hamiltonians $H,G$, we have the following triangle inequality:
$$|\sigma_{a,a \ast b}(H)-\sigma_{a,a \ast b}(G)|\leq \gamma(\overline{H}\#G).$$
\end{prop}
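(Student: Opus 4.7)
The plan is to reduce this to the standard triangle inequality for spectral invariants (Proposition \ref{prop spec inv} (3)) using the fact that $[M]$ is the unit of the quantum product. Writing $K := \overline{H}\#G$, the Hamiltonian flow $\phi_K^t = (\phi_H^t)^{-1}\phi_G^t$ satisfies $\phi_G^t = \phi_H^t \circ \phi_K^t$, so the paths $G$ and $H\#K$ are homotopic rel.\ endpoints. Similarly, $\phi_H^t = \phi_G^t \circ (\phi_K^t)^{-1}$ shows that $H$ is homotopic to $G \# \overline{K}$ as paths. By homotopy invariance of spectral invariants, these identifications let us apply triangle inequalities between $H$ and $G$ in either direction.

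Using $a\ast [M] = a$ and Proposition \ref{prop spec inv} (3), I would first derive the two-sided estimates
\[
c(G,a) - c(H,a) \leq c(K,[M]), \qquad c(H,a) - c(G,a) \leq c(\overline{K},[M]),
\]
and the analogous inequalities with $a$ replaced by $b$. These follow immediately by applying the triangle inequality to the compositions $G = H\#K$ and $H = G\# \overline{K}$ and pairing with the class $[M]$.

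Then I would expand $\sigma_{a,b}(H) - \sigma_{a,b}(G) = [c(H,a)-c(G,a)] + [c(G,b)-c(H,b)]$ and bound each bracket by one of the above inequalities, choosing them so that one bracket contributes $c(\overline{K},[M])$ and the other contributes $c(K,[M])$. This yields
\[
\sigma_{a,b}(H)-\sigma_{a,b}(G)\leq c(\overline{K},[M])+c(K,[M])=\gamma(K)=\gamma(\overline{H}\#G).
\]
Swapping the roles of $H$ and $G$ gives the bound on $\sigma_{a,b}(G)-\sigma_{a,b}(H)$ by the same quantity, which establishes the desired absolute value inequality.

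There is no substantive obstacle in this argument: it is a direct consequence of the triangle inequality combined with the homotopy invariance of spectral invariants. The only minor care needed is to verify that $\overline{K}$ generates (up to reparametrization/homotopy) the path inverse to $\phi_K^t$, so that $c(\overline{K},[M])$ is the same contribution that appears in the definition of $\gamma(\overline{H}\#G)$.
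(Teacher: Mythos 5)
Your argument is correct and is essentially the same as the paper's: both expand $\sigma_{a,b}(H)-\sigma_{a,b}(G)$ into the two brackets $[c(H,a)-c(G,a)]$ and $[c(G,b)-c(H,b)]$, bound each one by the triangle inequality $c(H\#K,a\ast[M])\leq c(H,a)+c(K,[M])$, and recognize the resulting sum as $\gamma(\overline{H}\#G)$. Your additional remark about checking that $\overline{K}$ generates the inverse path (so that the two terms assemble into $\gamma(K)$) is the same identification the paper uses implicitly when writing $c(\overline{G}\#H,[M])+c(\overline{H}\#G,[M])=\gamma(\overline{H}\#G)$.
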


\begin{proof}
$$\sigma_{a,a \ast b}(H)-\sigma_{a,a \ast b}(G)=c(H,a)-c(H,a \ast b)-(c(G,a)-c(G, a\ast b))$$
$$\leq c(\overline{G}\#H,[M])+c(\overline{H}\#G,[M])=\gamma(\overline{H}\#G).$$ By changing the role of $H$ and $G$, we get $\sigma_{a , a \ast b}(G)-\sigma_{a,a \ast b}(H)\leq \gamma(\overline{H}\#G)$ too. This completes the proof.
\end{proof}

Proposition \ref{triangle ineq} allows us to define the following: Let $(M^{2n},\omega)$ be a negative monotone symplectic manifold and $a,b\in H_\ast(M;\K)$.
$$\sigma_{a,a \ast b}: \Ham(M,\omega) \to \R$$
$$\sigma_{a,a \ast b}(\phi):=\sigma_{a,a \ast b}(H)$$
for any $H$ such that $\phi_H=\phi.$ Note that the well-definedness is due to Theorem \ref{neg mono}. Similarly, we define the following for $\mathbb{C}P^n$: Let $h:=[\C P^{n-1}]$ and $l_1,l_2 \in \N, l_1 < l_2$.
$$\sigma_{h^{l_1}, h^{l_2}}: \Ham(\C P^n,\omega) \to \R$$
$$\sigma_{h^{l_1}, h^{l_2}}(\phi):= \inf_{\phi_H=\phi} \sigma_{h^{l_1}, h^{l_2}} (H) .$$

\begin{corol}
Let $(M^{2n},\omega)$ be either a negative monotone symplectic manifold or $(\mathbb{C}P^n,\omega_{FS})$. For $a,b \in H_\ast (M;\K)$, we have the following triangle inequality: For $\phi,\psi \in \Ham(M,\omega)$,
$$ |\sigma_{a,a \ast b}(\phi)-\sigma_{a, a\ast b}(\psi)|\leq \gamma(\phi^{-1}\psi).$$
\end{corol}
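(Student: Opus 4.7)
The strategy is to lift the assertion from the diffeomorphism level back to the Hamiltonian level, where we already have the triangle inequality of Proposition \ref{triangle ineq}, and then pass to infima. Since both definitions of $\sigma_{a,a\ast b}$ and $\gamma$ on $\Ham(M,\omega)$ are infima over choices of generating Hamiltonians (and, in the negative monotone case, $\gamma$ even descends to $\Ham$ by Theorem \ref{neg mono}), the passage to infima is the natural move.

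Concretely, the plan is as follows. Fix $\phi,\psi\in \Ham(M,\omega)$ and $\varepsilon>0$. First, pick a Hamiltonian $H$ with $\phi_H=\phi$ such that $\sigma_{a,a\ast b}(H)<\sigma_{a,a\ast b}(\phi)+\varepsilon$, and pick a Hamiltonian $K$ with $\phi_K=\phi^{-1}\psi$ such that $\gamma(K)<\gamma(\phi^{-1}\psi)+\varepsilon$. The concatenation $H\#K$ then generates $\psi$, so by definition
\[
\sigma_{a,a\ast b}(\psi)\leq \sigma_{a,a\ast b}(H\#K).
\]
Apply Proposition \ref{triangle ineq} to the pair $(H,H\#K)$:
\[
\sigma_{a,a\ast b}(H\#K)-\sigma_{a,a\ast b}(H)\leq \gamma(\overline{H}\#H\#K).
\]
The Hamiltonian $\overline{H}\#H\#K$ generates the same path as $K$ since $\phi_{\overline{H}\#H}^{t}=\mathrm{id}$ for every $t$, so $\gamma(\overline{H}\#H\#K)=\gamma(K)<\gamma(\phi^{-1}\psi)+\varepsilon$. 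Combining the two inequalities and letting $\varepsilon\to 0$ yields
\[
\sigma_{a,a\ast b}(\psi)\leq \sigma_{a,a\ast b}(\phi)+\gamma(\phi^{-1}\psi).
\]
Swapping the roles of $\phi$ and $\psi$ and using $\gamma(\psi^{-1}\phi)=\gamma(\phi^{-1}\psi)$ (immediate from the definition of $\gamma$ and the symmetry $\overline{\overline{H}}=H$ up to homotopy) gives the opposite inequality, hence the absolute-value bound.

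The only mildly delicate point, which I would treat as the main obstacle, is the bookkeeping of Hamiltonian paths versus their time-one maps when moving between the two sides of Proposition \ref{triangle ineq}. One must verify that $\overline{H}\#H\#K$ and $K$ really generate the same element of $\widetilde{\Ham}(M,\omega)$ (not merely the same diffeomorphism), so that the numerical identity $\gamma(\overline{H}\#H\#K)=\gamma(K)$ is valid regardless of whether $\gamma$ has been shown to descend to $\Ham$; this follows from the explicit form $\phi^{t}_{\overline{H}\#H}\equiv\mathrm{id}$. Once that is in place, the infima manoeuvre is routine, and the result is immediate from Proposition \ref{triangle ineq} combined with the fact that $\gamma:\Ham(M,\omega)\to\mathbb{R}$ is well defined in both settings under consideration (by Theorem \ref{neg mono} and Theorem \ref{C0-conti of gamma:CPn}).
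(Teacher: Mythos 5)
Your proof is correct and follows essentially the same route as the paper: both apply Proposition \ref{triangle ineq} to the pair $(H, H\#G)$, note that $\overline{H}\#H\#G$ generates the same path rel endpoints as $G$ so $\gamma(\overline{H}\#H\#G)=\gamma(G)$, and then pass to infima over generating Hamiltonians and symmetrize. The only cosmetic difference is that you work with $\phi^{-1}\psi$ directly and make the $\varepsilon$-bookkeeping explicit, whereas the paper derives $\sigma(\phi\psi)-\sigma(\phi)\leq\gamma(\psi)$ first and relabels.
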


\begin{proof}
We only explain the case of $(\mathbb{C}P^n,\omega_{FS})$ since the other is simpler. By Proposition \ref{triangle ineq},
$$\sigma_{h^{l_1}, h^{l_2}} (H\#G)\leq \sigma_{h^{l_1}, h^{l_2}} (H)+\gamma(G).$$
Take an infimum on both sides as in the definition.
$$\sigma_{h^{l_1}, h^{l_2}} (\phi\psi)\leq \inf_{\phi_H=\phi,\phi_G=\psi} \sigma_{h^{l_1}, h^{l_2}} (H\#G)\leq \sigma_{h^{l_1}, h^{l_2}} (\phi)+\gamma(\psi).$$
Since $\sigma_{h^{l_1}, h^{l_2}}$ are finite,
$$\sigma_{h^{l_1}, h^{l_2}} (\phi\psi)- \sigma_{h^{l_1}, h^{l_2}} (\phi)\leq \gamma(\psi).$$
This implies the triangle inequality
$$ |\sigma_{h^{l_1}, h^{l_2}} (\phi)- \sigma_{h^{l_1}, h^{l_2}} (\psi)|\leq \gamma(\phi^{-1}\psi)$$
where $\phi,\psi \in \Ham(\C P^n,\omega_{FS})$.
\end{proof}

This corollary and the $C^0$-continuity of $\gamma$ implies the $C^0$-continuity of $\sigma_{a,a\ast b}$. This allows us to define $\sigma_{a,a \ast b}$ for Hamiltonian homeomorphisms i.e. for a Hamiltonian homeomorphism $\phi$, define $\sigma_{a, a \ast b}(\phi):=\lim_{n\to \infty} \sigma_{a, a \ast b}(\phi_n)$ where $\phi_n \in \Ham(M,\omega),\ \phi_n \xrightarrow{C^0} \phi$.

We are now ready to prove Theorem \ref{theo-arnold}. 

\begin{proof}(of Theorem \ref{theo-arnold})

Since the negative monotone case is simpler than the case of $(\mathbb{C}P^n,\omega_{FS})$, we only prove the latter. We assume that for $\phi \in \ovl{\Ham}(\C P^n, \omega_{FS})$ and $l_1 < l_2$, we have
$$\sigma_{h^{l_1}, h^{l_2}} (\phi) = 0.$$
It is enough to prove that an arbitrary open neighborhood $U$ of $\Fix(\phi)$ is homologically non-trivial. Let $f:M\to \mathbb{R}$ be a  sufficiently $C^2$-small smooth function such that $f< 0$ on $M\backslash \ovl{U}$, $f|_{U}=0$ and $c_{LS}(f,\cdot)=c(f,\cdot).$ (See Proposition \ref{prop spec inv} (5) for the definition of $c_{LS}$.)

First of all, take a sequence $\phi_j \in \Ham(M,\omega),\ j\in \mathbb{N}$ such that 
$$d_{C^0}(\phi,\phi_j)\leq 1/j.$$ 
The $C^0$-continuity of $\gamma$ allows us to take a subsequence $\{j_k\}_{k\in \mathbb{N}}$ so that for each $k$, 
$$\gamma(\phi^{-1}\phi_{j_k})<1/k.$$
 Next, for each $k$, take a Hamiltonian $H_k$ which generates $\phi_{j_k}$ and
 $$\sigma_{h^{l_1}, h^{l_2}}  (H_k)\leq \sigma_{h^{l_1}, h^{l_2}} (\phi_{j_k})+1/k.$$

We borrow the following claim proved in  \cite{[BHS18b]}.
\begin{claim}$($Claim 5.3 in \cite{[BHS18b]}$)$
Assume $\phi_{H_k} \xrightarrow{C^0} \phi$. For any $a \in H_\ast (M ; \K) \backslash \{0\}$, there exists $0< \eps _0<1$ and an integer $k_0$ such that for any $k \geq k_0$, we have $c(H_k \#\eps_0 f , a) =c(H_k , a).$
\end{claim}

 From this Claim, there exist $\eps_0 >0$ and $k_0\in \mathbb{N}$ such that if $k\geq k_0$, then 
 $$c(H_k\# \eps_0 f,a )=c(H_k,a)$$ for all $a\in H_\ast(\C P^n;\K).$
For $k\geq k_0$,
$$c(H_k, h^{l_2})=c(H_k \#\eps_0 f,h^{l_2} )\leq c(H_k,h^{l_1})+c(\eps_0 f, h^{l_2-l_1})$$
and thus,
$$-\sigma_{h^{l_1}, h^{l_2}} (H_k) \leq c(\eps_0 f, h^{l_2-l_1}) \leq c(f, h^{l_2-l_1}).$$
By our choices of $\phi_{j_k}$ and $H_k$, we have the following.
$$ \sigma_{h^{l_1}, h^{l_2}} (H_k)\leq \sigma_{h^{l_1}, h^{l_2}} (\phi_{j_k})+1/k \leq \sigma_{h^{l_1}, h^{l_2}} (\phi)+\gamma(\phi^{-1}\phi_{j_k})+1/k $$
$$\leq \sigma_{h^{l_1}, h^{l_2}} (\phi)+2/k=2/k.$$
Thus,
$$-2/k\leq  -\sigma_{h^{l_1}, h^{l_2}}(H_k) \leq c(f,h^{l_2-l_1}).$$
By taking a limit $k\to +\infty$, we obtain
$$0\leq c(f,h^{l_2-l_1}).$$
Thus,
$$0\leq c(f,h^{l_2-l_1})\leq c(f,[M])\leq 0.$$
The last inequality follows from $f\leq 0.$ Since $f$ was taken to satisfy $c_{LS}(f,\cdot)=c(f,\cdot),$ we have
$$c_{LS}(f,h^{l_2-l_1})=c_{LS}(f,[M])(=0).$$
This implies that $\ovl{U}$ is homologically non-trivial.
\end{proof}

\appendix

\end{document}